\theoremstyle{definition}
\newtheorem{theorem}{Theorem}[section]
\newtheorem{prop}[theorem]{Proposition}
\newtheorem{conjecture}[theorem]{Conjecture}
\newtheorem{lemma}[theorem]{Lemma}
\newtheorem{corollary}[theorem]{Corollary}
\newcommand{\Q}{\mathbb{Q}}
\newcommand{\Z}{\mathbb{Z}}
\newcommand{\N}{\mathbb{N}}
\newcommand{\gpek}{g_{p,e}(k)}
\newcommand{\gzpk}{g_{p,1}(k)}
\newcommand{\gwar}[3]{g_{#1,#2}(#3)}
\newcommand{\ol}{{\mathcal O}_L}
\title{Waring numbers of ramified $p$-adic rings}
\author[Lucas Anthony et al.]{Lucas Anthony \and Joe Burton \and Irene Deegbe \and Sarah England \and Spencer Hamblen \and Reagan Knowles \and Luke Stewart \and Hannah Wright}
\subjclass[2010]{11P05 (primary), 11E95, 11E25, 12D15}
\begin{document}

\begin{abstract}
Let $p$ be prime, $e,k$ be positive integers, and let $R = \Z_p[\sqrt[e]{p}]$.   We calculate the Waring numbers $g_R(k)$ for many values of $p, e$, and $k$, and investigate how the Waring numbers for $p=2$ change as $e$ and $k$ vary.
\end{abstract}

\maketitle

\section{Introduction} \label{intro}

\begin{theorem}[Waring Problem/Hilbert-Waring Theorem]
For every integer $k \geq 2$ there exists a smallest positive integer $g(k)$ such that every positive integer can be written as the sum of at most $g(k)$ $k$-th powers of integers.
\end{theorem}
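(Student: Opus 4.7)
The plan is to follow Hilbert's original 1909 approach, whose core is an algebraic identity expressing powers of a sum of squares as a positive rational combination of $2k$-th powers of integer linear forms; Lagrange's four-square theorem then does the rest.

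First I would reduce to proving that $g(2k)$ is finite for every $k$. If every non-negative integer is a sum of at most $B$ many $2k$-th powers, then for each residue class $r \pmod{2k}$ one writes an arbitrary $n$ as $n = r \cdot 1^k + m^k \cdot q$ for suitable $m$ (using the fact that $k$-th powers are dense in a controlled sense) and peels off a bounded number of $k$-th powers until one reaches a value that is a sum of $2k$-th powers, each of which is trivially a $k$-th power. The bookkeeping is routine once the even-exponent case is in hand.

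The main step is the \emph{Hilbert--Waring identity}: for every $k \geq 1$ there exist a positive integer $N$, positive rationals $\alpha_1,\dots,\alpha_N$, and integers $b_{ji}$ such that
\[
(x_1^2 + x_2^2 + x_3^2 + x_4^2 + x_5^2)^k \;=\; \sum_{j=1}^{N} \alpha_j\,(b_{j1}x_1 + b_{j2}x_2 + b_{j3}x_3 + b_{j4}x_4 + b_{j5}x_5)^{2k}.
\]
To prove this I would work in the finite-dimensional $\Q$-vector space of symmetric homogeneous polynomials of degree $2k$ in five variables invariant under $x_i \mapsto -x_i$, and show that the $2k$-th powers of integer linear forms span a subspace containing $(x_1^2+\cdots+x_5^2)^k$. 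The positivity of the coefficients $\alpha_j$ is the subtle point; the cleanest route is Hausdorff's realization via repeated application of the multinomial theorem to $(x_1^2+\cdots+x_5^2)^k$, grouping terms into expressions of the form $((x_i \pm x_j \pm \cdots)^{2k} + (x_i \mp x_j \mp \cdots)^{2k})/2$, which automatically produces positive rational weights.

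Granted the identity, the conclusion runs as follows. Given a positive integer $n$, apply Lagrange's four-square theorem to write $n = y_1^2 + y_2^2 + y_3^2 + y_4^2$ and substitute $(y_1,y_2,y_3,y_4,0)$ into the identity to express $n^k$ as a positive rational combination of $2k$-th powers of integers. Clearing the common denominator $M$ of the $\alpha_j$ expresses $M n^k$ as a sum of a bounded number (depending only on $k$) of $2k$-th powers. To pass from $Mn^k$ to arbitrary integers, write any $m$ in the form $m = M q + s$ with $0 \le s < M$, then use the identity iteratively on $q$ to absorb the factor $M$, reducing to finitely many small residues which are each handled trivially by summing copies of $1^{2k}$. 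I expect the construction and positivity analysis of the Hilbert--Waring identity to be the main obstacle; everything downstream is essentially accounting.
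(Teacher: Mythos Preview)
The paper does not prove this statement; it is quoted in the introduction as the classical Hilbert--Waring theorem to motivate the study of analogous Waring numbers over $p$-adic rings, and no proof is offered. So there is no paper argument to compare against, and the question is simply whether your outline is sound.

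Your reduction to even exponents is correct but for a much simpler reason than you give: every $2k$-th power is already a $k$-th power, so $g(k)\le g(2k)$ trivially. The residue-class manoeuvre you describe is both unnecessary and, as written, incoherent. On the identity itself, your claim that positivity of the $\alpha_j$ comes from ``grouping terms into expressions of the form $((x_i\pm x_j\pm\cdots)^{2k}+\cdots)/2$'' via the multinomial theorem is not how the general case is handled; that kind of symmetrisation produces explicit identities only for small $k$. Hausdorff's proof of the Hilbert identity for arbitrary $k$ goes through moment integrals (essentially a Gaussian-quadrature argument), and the positivity is genuinely the hard part.

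The decisive gap, however, is your last paragraph. From the identity and Lagrange you correctly get that $M m^{k}$ is a bounded sum of $2k$-th powers \emph{for every non-negative integer $m$}. But writing an arbitrary $n$ as $Mq+s$ does nothing: $q$ is not of the form $m^{k}$, so the identity says nothing about $Mq$, and ``using the identity iteratively on $q$'' is circular---representing $q$ is precisely the statement you are trying to prove. In Hilbert's actual proof (and the later simplifications of Hausdorff, Stridsberg, Rieger) this step is where the real work lies: one combines the identity with an \emph{induction on $k$} and a descent argument showing that any $n$ can be written as a bounded sum of $k$-th powers plus a remainder of strictly smaller order, to which the induction hypothesis applies. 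That descent, not the identity itself, is the missing idea, and it is far from routine bookkeeping.
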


The idea behind Waring Problem -- examining sums of powers -- can be easily extended to any ring.   For an excellent and thorough exposition of the research on Waring Problem and its generalizations, see Vaughan and Wooley \cite{wooley}.  We will focus on the setting of totally ramified $p$-adic rings.

Let $R$ be a ring, $k > 1$ be a positive integer, and let $R^k$ be the additive semigroup generated by all $k$-th powers of elements in $R$.  Then we define the Waring number $g_{R}(k)$ to be the smallest positive integer such that every element of $R^k$ can be written as the sum of at most $g_{R}(k)$ $k$-th powers.  A lot is known for $p > 2$ when $R$ is $\Z_p$ or an unramified extension of $\Z_p$, both general bounds (\cite{dodson1}) and specific values (\cite{bhask}, \cite{bovey}, \cite{voloch}).  These results rely heavily on work from Waring Problem over finite fields; for a summary see Demiro\u{g}lu Karabulut (\cite{demi}).

For $e \in \N$ and $p$ prime, we examine $R = \Z_p[\sqrt[e]{p}]$, the ring of integers of a totally ramified extension of $\Q_p$.  We then let $g_{p,e}(k) = g_{\Z_p[\sqrt[e]{p}]}(k)$ be the Waring numbers for these rings.  Note that $g_{p,1}(k) = g_{\Z_p}(k)$; we will more often use the former notation for consistency.  Our goal is to examine how $\gpek$ varies as $p$, $e$, and $k$ all vary. 

The methods are similar for all of these results: we first determine which elements are $k$-th powers via Hensel's Lemma (Lemma \ref{hl}) and Corollary \ref{hencal}, demonstrate how to write all elements of $R$ as sums of at most $g_{p,e}(k)$ $k$-th powers, and prove that a specific element cannot be written using fewer $k$-th powers.  We start with some general results, including when $p$ does not divide $k$ (Section \ref{gensec}).  Our most interesting results mainly focus on $p=2$ (Section \ref{2sec}), though we have several specific results for $p=3$ (Section \ref{3sec}).

For $p=2$, we have 3 main observations.  Let $\nu_2(n)$ be the 2-adic valuation of $n$, the exponent of the highest power of 2 dividing $n$.
\begin{itemize}
    \item As $\nu_2(e)$ increases, the Waring numbers decrease.  For example, $g_{2,1}(6) = 8$, $g_{2,2}(6)  = 6$, and $g_{2,4}(6) = 4$.
    \item As $\nu_2(k)$ increases, the Waring numbers increase.  For example, $g_{2,2}(3) = 2$, $g_{2,2}(2)  = 3$, and $g_{2,2}(4) = 7$.
    \item For fixed $\nu_2(e)$ and $\nu_2(k)$, as $k$ increases, the Waring numbers increase, but are fixed once $k$ is greater than a constant dependent on $e$.  For example, $g_{2,3}(2) = 4$, $g_{2,3}(6) = 6$, and $g_{2,3}(10) = g_{2,3}(k) = 8$ for all $k \equiv 2 \bmod 4$ with $k \geq 10$.
\end{itemize}

Some, but not all, of this behavior has been noted for $p=3$, where the Waring numbers seem to be affected by both divisibility by 3 and divisibility by 2.

In our preliminary investigations, it seemed that $\gpek \neq g_{p,1}(k)$ if and only if $p$ divided both $e$ and $k$.  However, neither direction of the implication holds in general, as we have 
\begin{equation}
    \gwar{3}{3}{6} = g_{\Z_3}(6) = 9, \text{ and}
\end{equation}
\begin{equation}
    \gwar{2}{3}{6} = 6 \neq g_{\Z_2}(6) = 8.
\end{equation}

It is unclear if any similar strong results will hold universally, but we have established some criteria examining when $\gpek \neq g_{p,1}(k)$ will hold.  (See Corollary \ref{2oddk}, Conjecture \ref{2evenk}, and Theorem \ref{2e2}.) 

Kowalczyk and Miska (\cite{kowmis}) calculated some values of $\gpek$ in terms of general henselian rings, with a more global focus.  For example, Example 3.27 of \cite{kowmis} proves every element of $(\Z_2[\sqrt{2}])^4$ can be written as the sum of 7 fourth powers, here replicated as Theorem \ref{224}.

\section{General results} \label{gensec}

Hensel's Lemma over extensions of $\Q_p$ is a necessary tool in every case to reduce the amount of required computation.

\begin{lemma}[Hensel's Lemma (Theorem 9.1 of \cite{conrad})] \label{hl}
Let $L$ be an extension of finite degree over $\Q_p$ with  absolute value $|\cdot|_p$, and let $\ol = \{x \in L \mid |x|_p \leq 1\}$ be the ring of integers in $L$.  Let $f(x)$ be a polynomial with coefficients in $\ol$ and $a \in \ol$ be such that
\[|f(a)|_p < |f'(a)|_p^2.\]
Then there exists $\alpha \in \ol$ such that $f(\alpha) = 0$ and $|\alpha - a|_p < |f'(a)|_p$.
\end{lemma}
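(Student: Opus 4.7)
The plan is to prove Lemma \ref{hl} by Newton iteration, which is the classical approach in this non-archimedean setting. Set $a_0 = a$ and define inductively
\[ a_{n+1} \;=\; a_n - \frac{f(a_n)}{f'(a_n)}. \]
The hypothesis $|f(a)|_p < |f'(a)|_p^2$ guarantees $|f(a)/f'(a)|_p < |f'(a)|_p \leq 1$, so the initial correction term lies in $\mathcal{O}_L$ and $a_1 \in \mathcal{O}_L$. The goal is to show this pattern persists and produces a Cauchy sequence whose limit is the desired root.

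The technical heart of the argument is the polynomial Taylor expansion. Writing $f(x+h) = f(x) + f'(x)\,h + h^2 \, Q(x,h)$ and $f'(x+h) = f'(x) + h\,R(x,h)$ with $Q, R \in \mathcal{O}_L[x,h]$ (so $|Q(a_n,h)|_p, |R(a_n,h)|_p \leq 1$ for $a_n, h \in \mathcal{O}_L$), I would prove by induction on $n$ that
\[ \text{(i)}\ a_n \in \mathcal{O}_L, \qquad \text{(ii)}\ |f'(a_n)|_p = |f'(a)|_p, \qquad \text{(iii)}\ \frac{|f(a_n)|_p}{|f'(a)|_p^{2}} \leq c^{\,2^{n}}, \]
where $c := |f(a)|_p/|f'(a)|_p^2 < 1$. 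Setting $h_n = -f(a_n)/f'(a_n)$, assertion (iii) together with (ii) for index $n$ yields $|h_n|_p \leq c^{2^n} |f'(a)|_p < |f'(a)|_p$; plugging into the Taylor expansion and using $f(a_n) + f'(a_n) h_n = 0$ gives $|f(a_{n+1})|_p \leq |h_n|_p^{2}$, which is exactly what (iii) at step $n+1$ demands after dividing by $|f'(a)|_p^2$. For (ii), the ultrametric inequality is decisive: since $|h_n R(a_n, h_n)|_p \leq |h_n|_p < |f'(a_n)|_p$, the strict-inequality case of the non-archimedean triangle inequality forces $|f'(a_{n+1})|_p = |f'(a_n)|_p$.

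Once (i)--(iii) are established, the sequence $\{a_n\}$ is Cauchy because $|a_{n+1} - a_n|_p = |f(a_n)|_p/|f'(a_n)|_p \leq c^{2^n} |f'(a)|_p$ decays (in fact at a doubly-exponential rate). By completeness of $L$, the limit $\alpha := \lim a_n$ lies in $\mathcal{O}_L$; continuity of the polynomial $f$ and (iii) give $f(\alpha) = 0$. Finally, the ultrametric inequality applied to the telescoping sum $\alpha - a = \sum_{n \geq 0} (a_{n+1} - a_n)$ yields
\[ |\alpha - a|_p \;\leq\; \max_{n \geq 0} |a_{n+1}-a_n|_p \;=\; |a_1 - a_0|_p \;=\; \frac{|f(a)|_p}{|f'(a)|_p} \;<\; |f'(a)|_p. \]

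The main obstacle is the bookkeeping in step (iii): one must track simultaneously how $|f(a_n)|_p$ shrinks quadratically while $|f'(a_n)|_p$ remains exactly constant, since any loss in the latter would ruin the doubling of the exponent of $c$. The ultrametric property is what saves the day, and this is the one place where the proof genuinely uses more than just a naive Newton's-method estimate.
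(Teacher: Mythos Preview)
Your Newton-iteration argument is correct and is the standard proof of this version of Hensel's Lemma. The paper itself does not supply a proof of Lemma~\ref{hl}; it simply quotes the statement from Conrad's notes \cite{conrad} and uses it as a black box (the only argument that follows in the paper is the short deduction of Corollary~\ref{hencal}). Your write-up is therefore not competing with any proof in the paper---it is supplying one where the paper defers to the literature, and the approach you give (quadratic shrinking of $|f(a_n)|_p$ while $|f'(a_n)|_p$ stays fixed by the strict ultrametric inequality) is exactly the argument in the cited reference.
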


Hensel's Lemma leads to a number of immediate results simplifying the calculation of $\gpek$. 
For the rest of the section, we let $L$ be a (possibly trivial) totally ramified extension of $\Q_p$, $\ol$ be the ring of integers of $L$, and $\pi_L$ be a uniformizer of $L$.  

We will use the following Corollary of Hensel's Lemma frequently in our calculations of $k$-th powers, especially in Sections \ref{2sec} and \ref{3sec}.

\begin{corollary} \label{hencal}
    Let $\nu_{L}$ be the normalized valuation of $L$, so that $\nu_L(\pi_L) = 1$.  Suppose that $k$ is a positive integer, $\alpha \in \ol$ is a unit, and there exists $\beta \in \ol$ such that $\alpha \equiv \beta^k \bmod \pi_L^{2\nu_L (k) + 1}$.  Then $\alpha$ is a $k$-th power of an element of $\ol$.
\end{corollary}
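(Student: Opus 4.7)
The plan is a direct application of Hensel's Lemma (Lemma \ref{hl}) to the polynomial
\[ f(x) = x^k - \alpha \]
evaluated at $a = \beta$. I will verify the hypothesis $|f(\beta)|_p < |f'(\beta)|_p^2$ by translating the congruence assumption, stated in terms of $\nu_L$, into the non-archimedean absolute value $|\cdot|_p$ on $L$.

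First I would observe that since $\alpha$ is a unit and $\alpha \equiv \beta^k \bmod \pi_L$ (as $2\nu_L(k)+1 \geq 1$), the element $\beta^k$ is a unit, so $\beta$ itself is a unit in $\ol$. Consequently
\[ \nu_L(f'(\beta)) = \nu_L(k \beta^{k-1}) = \nu_L(k), \]
because $\beta^{k-1}$ contributes no valuation. Meanwhile the hypothesis $\alpha \equiv \beta^k \bmod \pi_L^{2\nu_L(k)+1}$ gives $\nu_L(f(\beta)) \geq 2\nu_L(k) + 1$.

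Next I would convert these valuation inequalities into the absolute-value inequality required by Lemma \ref{hl}. Since $|\cdot|_p$ on $L$ is of the form $|x|_p = c^{\nu_L(x)}$ for some fixed $0 < c < 1$, the inequality $\nu_L(f(\beta)) > 2\nu_L(f'(\beta))$ is equivalent to $|f(\beta)|_p < |f'(\beta)|_p^2$. Combining the two bounds above yields
\[ \nu_L(f(\beta)) \geq 2\nu_L(k)+1 > 2\nu_L(k) = 2\nu_L(f'(\beta)), \]
so Hensel's Lemma produces $\gamma \in \ol$ with $f(\gamma) = 0$, i.e. $\gamma^k = \alpha$, exhibiting $\alpha$ as a $k$-th power in $\ol$.

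There is essentially no obstacle here beyond bookkeeping: the only subtle point is making sure that the congruence hypothesis, phrased via the normalized valuation $\nu_L$, matches the absolute-value form of Hensel's Lemma, and that $\beta$ is forced to be a unit so that $\nu_L(f'(\beta))$ equals exactly $\nu_L(k)$ rather than something larger. Both steps are routine once stated carefully, so the corollary follows cleanly.
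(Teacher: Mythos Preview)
Your proof is correct and follows essentially the same route as the paper: apply Hensel's Lemma to $f(x)=x^k-\alpha$ at $\beta$, first noting that $\beta$ must be a unit so that $\nu_L(f'(\beta))=\nu_L(k)$, and then translating the congruence hypothesis into the strict inequality $|f(\beta)|_p<|f'(\beta)|_p^2$. The only cosmetic difference is that you work with a generic base $c$ for the absolute value whereas the paper writes everything explicitly in terms of powers of $p$.
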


\begin{proof}
    Note that since $\alpha$ is a unit, so is $\beta$.  Let $f(x) = x^k - \alpha \in \ol[x]$.  Then $f(\beta) \equiv 0 \bmod \pi_L^{2\nu_L (k) + 1}$ and
\[|f'(\beta)|_{p} = |k\beta^{k-1}|_{p} = |k|_{p} = \frac{1}{p^{\nu_L (k)}}.\]
Therefore 
\[|f(\beta)|_{p} \leq \frac{1}{p^{2\nu_L (k) + 1}} < \frac{1}{p^{2\nu_L (k)}} = |f'(\beta)|_{p}^2,\]
and by Hensel's Lemma there exists $a \in \ol$ such that $f(a) = 0$; that is, $a^k = \alpha$.
\end{proof}

Our first result reduces the case when $p$ does not divide $k$ to the calculation of the corresponding mod $p$ Waring's number.\

\begin{lemma} \label{pnotk1}
Suppose $p$ does not divide $k > 1$. Let $g_0$ be the minimal number of non-zero $k$-th powers in $\Z/p\Z$ that sum to 0.  (Note that $g_0 \leq p$.) Then every element of $\pi_L \ol$ can be written as the sum of at most $g_0$ $k$-th powers, and $\pi_L$ cannot be written as the sum of fewer than $g_0$ $k$-th powers.
\end{lemma}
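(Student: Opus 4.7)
The plan is to prove the two claims by distinct methods: a constructive Hensel-lifting argument for the upper bound, and a reduction-mod-$\pi_L$ contradiction for the lower bound. The key point throughout is that $p \nmid k$ forces $\nu_L(k) = 0$, so Corollary \ref{hencal} demands only that $\alpha \equiv \beta^k \pmod{\pi_L}$, i.e., agreement in the residue field $\ol/\pi_L\ol \cong \Z/p\Z$.

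For the upper bound, I would first fix integer lifts $c_1, \ldots, c_{g_0} \in \{1, \ldots, p-1\} \subset \ol$ of a minimal zero-sum collection of nonzero $k$-th powers in $\Z/p\Z$, so that $\sum_{i=1}^{g_0} c_i^k = pm$ for some $m \in \Z$. Given $\gamma \in \pi_L\ol$, the strategy is to take $\alpha_i = c_i$ for $i < g_0$ and solve for $\alpha_{g_0}$ satisfying $\alpha_{g_0}^k = c_{g_0}^k + (\gamma - pm)$. Since $\gamma - pm \in \pi_L\ol$, the right-hand side is a unit congruent to $c_{g_0}^k$ modulo $\pi_L$, so Corollary \ref{hencal} immediately produces the required $\alpha_{g_0} \in \ol$, completing the decomposition $\gamma = \sum_{i=1}^{g_0} \alpha_i^k$.

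For the lower bound, suppose toward contradiction that $\pi_L = \sum_{i=1}^{r} \alpha_i^k$ with $r < g_0$. A valuation comparison shows that each non-unit $\alpha_i$ contributes a term with $\nu_L(\alpha_i^k) \geq k \geq 2$, whereas $\nu_L(\pi_L) = 1$; hence at least one $\alpha_i$ must be a unit, and reducing the whole equation modulo $\pi_L$ produces a sum of at most $r < g_0$ nonzero $k$-th powers in $\Z/p\Z$ equal to $0$, contradicting the minimality of $g_0$.

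I do not foresee a genuinely hard step. The fiddly checks are that $c_{g_0}^k + (\gamma - pm)$ really is a unit so that Corollary \ref{hencal} applies (its reduction mod $\pi_L$ is $c_{g_0}^k \neq 0$), and that the unit-part of the sum in the lower bound argument is nonempty (forced by $\nu_L(\pi_L) = 1 < k$). One should also note that $g_0 \geq 2$, since a single nonzero element of $\Z/p\Z$ cannot sum to zero.
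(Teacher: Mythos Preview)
Your proposal is correct and follows essentially the same approach as the paper. For the upper bound the paper applies Hensel's Lemma directly to $f(x)=x^k+y_2^k+\cdots+y_{g_0}^k-z$ at $x=y_1$, whereas you package the same computation via Corollary~\ref{hencal}; for the lower bound the paper reduces mod $\pi_L$ first and then invokes the divisibility contradiction, while you run those two steps in the opposite order---logically equivalent arguments.
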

 
\begin{proof}
Take $z \in \pi_L \ol$; we start by showing that $z$ can be written as the sum of $g_0$ $k$-th powers.  Let $y_1, \dots, y_{g_0} \in \{1, \dots, p-1\}$ be such that 
\[y_1^k + \dots + y_{g_0}^k \equiv 0 \bmod p,\]
and let $f(x) = x^k + y_2^k + \dots + y_{g_0}^k - z \in \ol[x]$.  Note that $f(y_1) \equiv 0 \bmod \pi_L$, so $|f(y_1)|_p < 1$.  Additionally, $f'(y_1) = ky_1^{k-1}$; since $y_1 \neq 0$ and $p$ does not divide $k$, we then have $|f'(y_1)|_p^2 = 1$.  So by Hensel's Lemma, there exists $\alpha \in \ol$ such that $f(\alpha) = \alpha^k + y_2^k + \dots + y_{g_0}^k - z= 0$, so $z$ can be written as the sum of $g_0$ $k$-th powers.

Next, assume that $\pi_L$ can be written as the sum of fewer than $g_0$ $k$-th powers in $\ol$.  So $\pi_L = y_1^k + y_2^k + \dots + y_{g_0 - 1}^k$ for some $y_i \in \ol$.  But then $y_1^k + y_2^k + \dots + y_{g_0 - 1}^k \equiv 0 \bmod \pi_L$, so by the minimality of $g_0$, we must have $y_1 \equiv \cdots \equiv y_{g_0 - 1} \equiv 0 \bmod \pi_L$.  So $\pi_L$ divides $y_i$ for all $1 \leq i \leq g_0 - 1$, and therefore $\pi_L^k$ divides $y_i^k$ for each $i$.  This then implies $\pi_L^k$ divides $y_1^k + y_2^k + \dots + y_{g_0 - 1}^k = \pi_L$, which is a contradiction since $\pi_L$ is a uniformizer of $L$  and $k > 1$.  So $\pi_L$ cannot be written as the sum of fewer than $g_0$ $k$-th powers in $\ol$.
\end{proof}

It is worth noting that we do not always have $g_{p,1}(k) = g_0$.  For example, when $p=7$ we have $g_0 = 2$, but $g_{\Z_7}(3) = 3$.

\begin{prop} \label{pndivk}
Let $\ol$ be the ring of integers of a totally ramified extension $L$ of $\Q_p$ of degree $e$.  If $p$ does not divide $k$, then we have $g_{\ol}(k) = \gzpk$.
\end{prop}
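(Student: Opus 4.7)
My plan is to show $g_{\ol}(k) \leq \gzpk$ and $g_{\ol}(k) \geq \gzpk$ separately, exploiting that $p \nmid k$ forces $\nu_L(k) = 0$ and that total ramification identifies $\ol/\pi_L\ol$ with $\Z/p\Z$, the common residue field. For the upper bound, I would take $z \in \ol$ and split on whether $z \in \pi_L\ol$. If $z \in \pi_L\ol$, Lemma \ref{pnotk1} already yields a representation using at most $g_0$ $k$-th powers, and $g_0 \leq \gzpk$ because the same lemma applied to $L = \Q_p$ forces $p$ to require at least $g_0$ $k$-th powers in $\Z_p$. Otherwise $z$ is a unit; lift $\bar z \in (\Z/p\Z)^*$ to $z_0 \in \{1, \ldots, p-1\} \subset \Z_p$. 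Since $z_0$ equals a sum of $z_0$ copies of $1 = 1^k$, we have $z_0 \in \Z_p^k$, so by definition of $\gzpk$ there exist $a_1, \ldots, a_r \in \Z_p$ with $r \leq \gzpk$ and $z_0 = a_1^k + \cdots + a_r^k$. Reducing modulo $p$ forces some $a_i$, say $a_1$, to be a unit. Then $a_1^k + (z - z_0)$ is a unit of $\ol$ congruent to $a_1^k$ modulo $\pi_L$, so by Corollary \ref{hencal} (applicable because $\nu_L(k) = 0$ reduces the required congruence to modulo $\pi_L^1$) it equals some $b^k \in \ol$; hence $z = b^k + a_2^k + \cdots + a_r^k$ expresses $z$ as a sum of at most $\gzpk$ $k$-th powers.

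For the lower bound, I would combine Lemma \ref{pnotk1} (applied to $L = \Q_p$) with the Hensel-lifting argument above, carried out inside $\Z_p$ itself, to derive the explicit formula
\[\gzpk = \max\bigl(g_0,\, \max_{\bar z \in (\Z/p\Z)^*} m(\bar z)\bigr),\]
where $m(\bar z)$ denotes the minimum number of $k$-th powers in $\Z/p\Z$ summing to $\bar z$. If this maximum is $g_0$, then $\pi_L \in \ol$ itself is a witness by the second half of Lemma \ref{pnotk1}. Otherwise the maximum equals $m(\bar z)$ for some $\bar z \neq 0$, and any lift $z_0 \in \ol$ of $\bar z$ works: a putative representation $z_0 = \alpha_1^k + \cdots + \alpha_M^k$ with $M < m(\bar z)$ would reduce modulo $\pi_L$ to a $\Z/p\Z$-representation of $\bar z$ with fewer than $m(\bar z)$ summands, contradicting the definition of $m(\bar z)$.

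The main subtlety is articulating cleanly why $\gzpk$ itself is already determined by $\Z/p\Z$-level data when $p \nmid k$; once that is in hand, the $\ol$-statement follows essentially formally because $\ol$ and $\Z_p$ share the same residue field and Corollary \ref{hencal} lifts mod-$\pi_L^1$ congruences to equalities equally well in $\ol$ as in $\Z_p$.
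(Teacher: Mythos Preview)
Your argument is correct but takes a genuinely different route from the paper's. The paper sets up a direct coefficient-matching correspondence: writing $z = \sum_i c_i \pi_L^i \in \ol$ and $\gamma = \sum_i c_i p^i \in \Z_p$, it shows (via Hensel's Lemma, splitting on whether some summand has a unit constant term) that $z$ is a sum of $n$ $k$-th powers in $\ol$ if and only if $\gamma$ is a sum of $n$ $k$-th powers in $\Z_p$. This is an element-by-element statement from which the equality of Waring numbers falls out immediately, and it never isolates an explicit formula for $\gzpk$.

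Your approach instead proves the two inequalities separately, and its centerpiece is the auxiliary residue-field formula
\[
\gzpk = \max\Bigl(g_0,\ \max_{\bar z \in (\Z/p\Z)^*} m(\bar z)\Bigr),
\]
which you establish by running the Hensel/Lemma~\ref{pnotk1} argument inside $\Z_p$ itself. Once this formula is in hand, the lower bound for $g_{\ol}(k)$ is immediate because $\ol$ and $\Z_p$ share the residue field $\Z/p\Z$, so the same witnesses ($\pi_L$ or a lift of the extremal $\bar z$) work in $\ol$. What your route buys is conceptual clarity: it makes explicit that when $p\nmid k$ the Waring number is determined entirely by residue-field data, and the formula is a useful byproduct. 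What the paper's route buys is brevity and a slightly stronger conclusion (the element-wise preservation of minimal representation length under $z\leftrightarrow\gamma$), without the need to derive or invoke the intermediate formula.
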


\begin{proof}
Take $z \in \ol$.  If we let $\pi_L$ be a uniformizer of $L$, we can write $z = \sum^{\infty}_{i=0} c_i \pi_L^i$ for some $c_i \in \{0,\dots,p-1\}$.  We then let $\gamma = \sum^{\infty}_{i=0} c_i p^i$.  We claim that we can write $z$ as a sum of $n$ $k$-th powers of elements of $\ol$ if and only if we can write $\gamma$ as a sum of $n$ $k$-th powers of elements of $\Z_p$.  Note that the proof below only depends on the coefficients $c_i$; we therefore only provide the forward direction of the proof, as the reverse direction immediately follows.

Suppose we can write $z$ as the sum of $n$ $k$-th powers of elements of $\ol$, so that there exist $y_1, \dots, y_n \in \ol$ such that $z = y_1^k + \dots + y_{n}^k$.  For $j \geq 0$, let $y_{i,j} \in \{0, \dots p-1\}$ be the coefficient of $\pi_L^i$ in $y_j$, so that $y_j = \sum^{\infty}_{i=0} y_{i,j} \pi_L^i$.  Note that 
\[z \equiv \gamma \equiv c_0 \equiv y_{0,1}^k + \dots + y_{0,n}^k \bmod p.\]

We then have two cases: either there exists $j$ such that $y_{0,j} \neq 0$, or $y_{0,j} = 0$ for all $j$.  In the first case, we can, after possibly renumbering, assume that $y_{0,1} \neq 0$.  We are then in the same situation as the setup in the proof of Lemma \ref{pnotk1}, so again
by Hensel's Lemma there exists $\beta \in \Z_p$ such that 
\[f(\beta) = \beta^k + y_{0,2}^k + \dots + y_{0,n}^k - \gamma = 0,\]
which shows that $\gamma$ can be written as the sum of $n$ $k$-th powers.  

Suppose then that we are in the second case: that $y_{0,j} = 0$ for all $j$.  Let $m$ be the least positive integer such that $y_{m,j} \neq 0$ for some $j$; after possibly renumbering, we can then assume that $y_{m,1} \neq 0$ and that $y_j \equiv 0 \bmod \pi_L^{m}$ for all $j$.  Note that this implies $z \equiv 0 \bmod \pi_L^{mk}$.  We then have
\begin{equation} \label{zdivm}
    \frac{z}{\pi_L^{mk}} = \left(\frac{y_1}{\pi_L^m}\right)^k
+ \cdots + \left(\frac{y_n}{\pi_L^m}\right)^k.
\end{equation}
Since $\frac{y_j}{\pi_L^m} = \sum^{\infty}_{i=m} y_{i-m,j} \pi_l^{i-m}$ and $y_{m,1} \neq 0$, Equation (\ref{zdivm}) reduces to the proof of the first case.  After again applying Hensel's Lemma, we have that $\frac{\gamma}{p^{mk}}$, and therefore $\gamma$, is the sum of $n$ $k$-th powers.
\end{proof}

Given the previous two results, we will focus on the case when $p$ divides $k$ for the rest of the paper.

\section{The case $p=2$} \label{2sec}

We start with some general results for $p=2$; these, combined with the results above, solve all cases with odd $k$ and any $e$.

We will frequently use Hensel's Lemma here to determine the unit squares in $\Z_p[\sqrt[e]{p}]$.  Corollary \ref{hencal} states that if $\omega = \sqrt[e]{p}$ then the unit $\alpha \in \Z_p[\omega]$ is a $k$-th power if and only if $\alpha$ is congruent to a $k$-th power mod $\omega^{2\nu_{\omega}(k) + 1}$.

\begin{lemma} \label{odd2adic}
For all odd integers $k \geq 3$, we have $g_{\Z_2} (k) = 2.$  
\end{lemma}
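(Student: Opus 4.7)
The plan is to deduce the lemma almost immediately from Lemma \ref{pnotk1} (applied to the trivial extension $L = \Q_2$, with $\pi_L = 2$) together with Corollary \ref{hencal}. Since $k$ is odd, $p = 2$ does not divide $k$, so the hypothesis of Lemma \ref{pnotk1} is satisfied. In $\Z/2\Z$ the only nonzero $k$-th power is $1$, and $1 + 1 \equiv 0 \pmod{2}$, so the invariant $g_0$ of that lemma equals $2$. Lemma \ref{pnotk1} then supplies both halves of the desired statement for the non-unit part: every element of $2\Z_2$ is a sum of at most two $k$-th powers, and $2$ itself cannot be written as fewer. Since $2 = 1^k + 1^k$, the lower bound $g_{\Z_2}(k) \geq 2$ is already in hand.

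To finish the upper bound I would dispose of the units $z \in \Z_2^{\times}$. Any such $z$ satisfies $z \equiv 1 \equiv 1^k \pmod{2}$, and because $\nu_2(k) = 0$ for odd $k$, the congruence demanded by Corollary \ref{hencal} is only modulo $\pi_L^{2\nu_2(k)+1} = 2$. Applying the corollary with $\beta = 1$ therefore produces an $a \in \Z_2$ with $a^k = z$, so every unit is a single $k$-th power. Combining this with the non-unit case gives $g_{\Z_2}(k) \leq 2$, and together with the lower bound we conclude $g_{\Z_2}(k) = 2$.

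I do not expect any real obstacle here, as all of the technical work has been packaged into Lemma \ref{pnotk1} and Corollary \ref{hencal}. The only subtlety worth a quick sanity check is that Lemma \ref{pnotk1} is legitimately being invoked in the trivial totally ramified case $L = \Q_2$, $\ol = \Z_2$; this is permitted by the section's standing convention that $L$ may be a (possibly trivial) totally ramified extension of $\Q_p$, and nothing in the proof of that lemma required $e > 1$.
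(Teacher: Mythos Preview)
Your proposal is correct and essentially the same as the paper's proof: both use Corollary \ref{hencal} to show every unit is already a $k$-th power, and both exploit that the only nonzero $k$-th power mod $2$ is $1$ to handle the non-units and the lower bound. The only cosmetic difference is that the paper writes the non-unit case by hand as $\alpha = 1 + (\alpha - 1)$, whereas you invoke Lemma \ref{pnotk1} with $g_0 = 2$; the content is identical.
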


\begin{proof}
Let $k \geq 3$ be odd.  Note that $2$ is not a $k$-th power in $\Z_2$, so we must have $g_{\Z_2} (k) \geq 2$.
We then take $\alpha \in \Z_2$; we will show that $\alpha$ is a sum of at most 2 $k$-th powers in $\Z_2$.

First, suppose that $\nu_2(\alpha) = 0$; that is, that $\alpha \equiv 1 \bmod 2$.  Here $\nu_2(k) = 0$, and so $2^{2\nu_2(k) + 1} = 2^1$.  By Corollary \ref{hencal}, since 1 is a $k$-th power, $\alpha$ is therefore itself a $k$-th power.
Then suppose that $\nu_2(\alpha) > 0$; that is, 2 divides $\alpha$. Then by the above case we have that $\alpha - 1$ and 1 are both $k$-th powers in $\Z_2$, and $\alpha = 1 + (\alpha - 1)$.

Therefore we have that for all odd integers $k \geq 3$ every element of $\Z_2$ can be written as the sum of two $k$-th powers, so $g_{\Z_2} (k) = 2.$
\end{proof}

\begin{corollary} \label{2oddk}
    For all odd integers $k\geq 3$ and all integers $e \geq 1$, $g_{2,e}(k) = 2$.
\end{corollary}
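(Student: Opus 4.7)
The plan is to derive this as an immediate consequence of the two preceding results, since odd $k$ is automatically coprime to $p=2$, which reduces the ramified case to the unramified case already computed.

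First I would observe that since $k$ is odd, $p=2$ does not divide $k$. This puts us squarely in the hypothesis of Proposition \ref{pndivk}: taking $L = \Q_2[\sqrt[e]{2}]$, the ring of integers is $\ol = \Z_2[\sqrt[e]{2}]$, so Proposition \ref{pndivk} gives
\[
g_{2,e}(k) \;=\; g_{\ol}(k) \;=\; g_{2,1}(k) \;=\; g_{\Z_2}(k).
\]
Second, I would invoke Lemma \ref{odd2adic}, which asserts $g_{\Z_2}(k) = 2$ for every odd $k \geq 3$. Combining the two equalities yields $g_{2,e}(k) = 2$, as desired.

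There is essentially no obstacle here: the hard work has already been done in Proposition \ref{pndivk} (which reduces any totally ramified $p$-adic Waring computation for $k$ coprime to $p$ back to $\Z_p$) and in Lemma \ref{odd2adic} (which handles $\Z_2$ for odd $k$ via a single application of Hensel's Lemma through Corollary \ref{hencal}). The only thing to verify is that the hypotheses of both results are satisfied, namely that $k$ is odd (so $p \nmid k$) and that $k \geq 3$ (so that Lemma \ref{odd2adic} applies and $2$ is genuinely not a $k$-th power, justifying $g_{2,e}(k) \geq 2$). The proof is therefore a one-line citation and can be presented as such.
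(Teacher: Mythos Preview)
Your proposal is correct and matches the paper's own proof essentially verbatim: the paper simply states that the corollary follows directly from Lemma~\ref{odd2adic} and Proposition~\ref{pndivk}, which is exactly the two-step citation you give.
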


\begin{proof}
    This follows directly from Lemma \ref{odd2adic} and Proposition \ref{pndivk}.
\end{proof}

When $k$ is even, the behavior of $g_{p,e}(k)$ appears to depend on the relative sizes of $\nu_p(e)$, $\nu_p(k)$, and $k$.  Notably, though, for $p=2$, every case we have calculated so far satisfies the following conjecture.

\begin{conjecture} \label{2evenk}
If $k$ is even and $\gcd(e,k) > 1$, then $g_{2,e}(k) < g_{\Z_2} (k)$; otherwise, $g_{2,e}(k) = g_{2,1} (k)$.
\end{conjecture}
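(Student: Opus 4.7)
Since this is an open conjecture, the plan is to propose strategies for its two implicit cases and identify the central obstacles. Let $\omega = \sqrt[e]{2}$, so $\nu_\omega(2) = e$; by Corollary~\ref{hencal} a unit $\alpha \in \Z_2[\omega]$ is a $k$-th power if and only if it is congruent to a $k$-th power modulo $\omega^{2e\nu_2(k)+1}$.

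\emph{Equality when $\gcd(e,k) = 1$.} Since $k$ is even, $\gcd(e,k) = 1$ forces $e$ to be odd. The plan is to mimic Proposition~\ref{pndivk}: for $z \in \Z_2[\omega]$ with $\omega$-adic expansion $z = \sum c_i \omega^i$, $c_i \in \{0,1\}$, set $\gamma = \sum c_i 2^i \in \Z_2$ and prove that $z$ is a sum of $n$ $k$-th powers in $\Z_2[\omega]$ if and only if $\gamma$ is a sum of $n$ $k$-th powers in $\Z_2$; this bijection immediately yields $g_{2,e}(k) = g_{2,1}(k)$. The key structural fact to exploit is that the $\omega$-adic valuation of any $k$-th power is a multiple of $k$, so coprimality of $e$ and $k$ rigidly aligns these valuations with multiples of $e$ upon projection, letting one peel off $\omega$-adic digits one layer at a time and apply Corollary~\ref{hencal} to lift. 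The main obstacle is that with $2 \mid k$, the Hensel lifts demand agreement modulo $\omega^{2e\nu_2(k)+1}$, much stronger than the mod-$\omega$ agreement that sufficed in Proposition~\ref{pndivk}; making the layer-by-layer induction carry this much matching information is the crux.

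\emph{Strict inequality when $\gcd(e,k) > 1$.} Let $d = \gcd(e,k) > 1$. The $k$-th powers
\[
(u\,\omega^{je/d})^k = u^k \cdot 2^{jk/d}, \qquad 1 \leq j < d,\ u \in \Z_2^\times,
\]
have $2$-adic valuations $jk/d$ that are not multiples of $k$, so they are $k$-th powers whose $2$-adic valuation is unattainable by any $k$-th power in $\Z_2$. The strategy is to choose a Waring-extremal element $z^\star \in \Z_2$ attaining $g_{\Z_2}(k)$, exhibit a strictly shorter decomposition of $z^\star$ in $\Z_2[\omega]$ by combining these new $k$-th powers with Hensel lifts from Corollary~\ref{hencal}, and then extend the saving to every element of $(\Z_2[\omega])^k$. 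The main obstacle is the absence of a uniform description of Waring-extremal elements in $\Z_2$: the specific savings verified so far (such as $g_{2,2}(6) = 6 < 8 = g_{\Z_2}(6)$) come from ad hoc constructions that resist a single umbrella argument, which is precisely why the statement remains conjectural rather than a theorem.
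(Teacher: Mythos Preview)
You correctly recognize that this statement is a conjecture in the paper, not a theorem; the paper offers no proof, only the computed values in Table~\ref{gp=2} and the surrounding case-by-case theorems as evidence. So there is no paper's proof to compare against, and your proposal is appropriately framed as a research outline.

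Your observation for the strict-inequality direction is on target and matches what the paper exploits in its worked examples: when $d = \gcd(e,k) > 1$, the element $\omega^{ek/d} = 2^{k/d}$ is a $k$-th power in $\Z_2[\omega]$ whose $2$-adic valuation is not a multiple of $k$, so it is a genuinely new $k$-th power unavailable in $\Z_2$. This is precisely why $4 = (\sqrt[3]{2})^6$ helps in Theorem~\ref{236}. Your diagnosis of the obstacle---no uniform description of the Waring-extremal elements of $\Z_2$---is also accurate.

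The bijection strategy for the equality direction is where I would push back. The digit-swap map $\sum c_i \omega^i \mapsto \sum c_i 2^i$ is not a ring homomorphism, so there is no a priori reason it should carry $k$-th-power decompositions to $k$-th-power decompositions once $2 \mid k$; the success of this trick in Proposition~\ref{pndivk} rests entirely on the fact that, when $p \nmid k$, only the constant coefficient modulo $p$ governs Hensel lifting. Your ``rigid alignment of valuations'' remark is also unclear: when $\gcd(e,k)=1$, the multiples of $k$ cover \emph{every} residue class modulo $e$, so if anything there is maximal dispersion rather than alignment. Tellingly, the paper's own verification of the equality case $g_{2,e}(2) = 4$ for all odd $e$ (Lemmas~\ref{oddlb} and~\ref{oddub}) proceeds by an intricate inductive coefficient-match against the squaring formula~(\ref{oddsquare}), with no bijection in sight; this suggests that the equality direction, if it is provable in general, will require methods closer to those lemmas than to Proposition~\ref{pndivk}.
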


We give the proofs for specific cases that provide evidence for the observed behavior listed in Section \ref{intro}.  All known values for $p=2$ and even $k$ are given in Table \ref{gp=2}.

\begin{table}[h]
    \centering
  \caption{Waring numbers $g_{p,e}(k)$ for $p=2$}
$\begin{array}{|c|c|c|c|c|c|} \cline{3-6}
   \multicolumn{2}{c|}{} & \multicolumn{4}{|c|}{k}\\
   \cline{3-6}  \multicolumn{2}{c|}{} & 2 & 4 & 6 & k \equiv 2 \bmod 4, k \geq 10\\ \hline
        \multirow{6}{*}{$e$} & 1 & \; 4 \; & \; 15 \; & \; 8 \; & \; 8 \; \\
        \cline{2-6}  & 2 & 3  &  7  & 6 & \; 6 \; \\
        \cline{2-6} & 3 & 4  & -  &  6 & \; 8 \;  \\
        \cline{2-6} & 4 & 3  & -  & 4 & \; - \; \\
        \cline{2-6} & e \text{ odd} & 4  & -  & - & \; - \; \\
        \cline{2-6} & e \text{ even} & 3  & -  & - & \; - \; \\
        \hline
      \end{array}$
      
    \label{gp=2}
\end{table}

We are particularly interested in ``horizontal'' results, where e is fixed and $k$ varies, and ``vertical'' results, where $k$ is fixed and $e$ varies.
We start with a specific case over $\Z_2$.

\begin{theorem}
    We have $g_{2,1}(4)= 15$.
\end{theorem}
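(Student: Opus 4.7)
The plan is to prove $g_{2,1}(4) = 15$ by establishing matching lower and upper bounds, both hinging on a classification of fourth powers in $\Z_2$ modulo $16$.

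For the lower bound, I would first show that every fourth power in $\Z_2$ is congruent to $0$ or $1$ modulo $16$. For an odd element written as $1 + 2a$, a direct computation yields $(1 + 2a)^2 = 1 + 4a(a+1) \in 1 + 8\Z_2$ (since $a(a+1)$ is even), whence $(1+2a)^4 \in 1 + 16\Z_2$; and for an even element $x = 2y$, we have $x^4 = 16 y^4 \equiv 0 \pmod{16}$. Consequently, if $15 = x_1^4 + \cdots + x_n^4$, the number $m$ of odd $x_i$'s satisfies $m \equiv 15 \pmod{16}$, forcing $m \geq 15$ and hence $n \geq 15$.

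For the upper bound, the key input is the converse: by Corollary \ref{hencal}, every unit $\alpha \in \Z_2$ with $\alpha \equiv 1 \pmod{16}$ is itself a fourth power. (The corollary asks for $\alpha$ to agree with some $\beta^4$ modulo $2^{2\nu_2(4)+1} = 32$; one checks that the fourth power residues modulo $32$ are exactly $\{1, 17\}$, which is precisely the image of $1 + 16\Z_2$ modulo $32$, so the hypothesis is always satisfied.) Given any $\alpha \in \Z_2$, I would then split on $\alpha \bmod 16$. If $\alpha \equiv m \pmod{16}$ with $1 \leq m \leq 15$, then $\alpha - (m-1)$ is odd and congruent to $1 \pmod{16}$, so it equals some $u^4$; writing $\alpha = u^4 + (m-1)\cdot 1^4$ exhibits $\alpha$ as a sum of $m \leq 15$ fourth powers. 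If $\alpha \equiv 0 \pmod{16}$, I would write $\alpha = 16 \beta$ and observe that the identity $(2 y_i)^4 = 16 y_i^4$ rescales any representation $\beta = \sum y_i^4$ into one for $\alpha$ with the same number of terms; induction on $\nu_2(\alpha)$ then reduces to the previous case (or to the trivial case $\alpha = 0$).

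The main obstacle is the Hensel-based converse to the mod-$16$ observation: I need not just that odd fourth powers lie in $1 + 16\Z_2$, but that every element of $1 + 16\Z_2$ is a fourth power, which requires the modulo-$32$ check above. Once this is in hand, both bounds reduce to short arithmetic: the lower bound is a counting argument modulo $16$, and the upper bound relies on the fortunate fact that subtracting $(m-1)\cdot 1^4$ from any $\alpha$ with $\alpha \equiv m \pmod{16}$ lands in the single fourth-power residue class $1 + 16\Z_2$ where Hensel provides a representative.
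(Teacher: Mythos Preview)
Your proposal is correct and follows essentially the same approach as the paper: both classify fourth powers in $\Z_2$ as those congruent to $0$ or $1$ modulo $16$ (via Corollary~\ref{hencal} and the check that the unit fourth-power residues mod $32$ are $\{1,17\}$), deduce the lower bound from $15$, and obtain the upper bound by subtracting copies of $1^4$ to reach the residue class $1+16\Z_2$ together with a rescaling by powers of $16$ for the zero residue class. Your write-up is in fact a bit more explicit than the paper's in handling the nonzero residue classes and in justifying the forward direction of the mod-$16$ classification, but the underlying argument is the same.
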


\begin{proof}
First, note that if $\alpha \in \Z_2$ and $\alpha \equiv 1 \bmod 2^5$, Corollary \ref{hencal} implies that $\alpha$ is a fourth power in $\Z_2$.  
Similarly, since $3^4 \equiv 17 \bmod 32$, if $\alpha \in \Z_2$ is such that $\alpha \equiv 17 \bmod 32$, then $\alpha$ is again a fourth power in $\Z_2$.  These are the only unit fourth powers in $\Z_2$, so we have that a unit $\alpha \in \Z_2$ is a fourth power if and only if $\alpha \equiv 1 \bmod 16$.

Then, all fourth powers in $\Z_2$ that are not units must be divisible by $2^4 = 16$; this immediately gives us that $g_{2,1}(4) \geq 15$, since 15 itself cannot be written in fewer than 15 fourth powers.  We then claim that if $\alpha \in \Z_2$ with $\nu_2(\alpha) \geq 4$, then $\alpha$ can be written as the sum of at most 15 fourth powers; this will show $g_{2,1}(4)= 15$.

Let $r = \lfloor \frac{\nu_2(\alpha)}{4} \rfloor$, and let $\alpha' = \frac{\alpha}{2^{4r}}$.  Then $\alpha' \not\equiv 0 \bmod 16$, so by the above we can write $\alpha'$ as the sum of at most 15 fourth powers.  So if
\[\alpha' = x^4 + y^4 + \dots ,\]
we then have
\[\alpha = (2^rx)^4 + (2^ry)^4 + \dots, \]
and so $\alpha$ as well can be written as the sum of at most 15 fourth powers.
\end{proof}

We will use this last method for dealing with the 0 residue class in Lemma \ref{oddub}.   We now proceed to the three general horizontal results, for fixed $e$ and varying $k$.

\begin{theorem}
If $k \equiv 2 \bmod 4$ and $k \geq 6$, then $g_{2,1} (k) = 8$.
\end{theorem}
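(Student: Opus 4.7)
The plan is to first pin down the unit $k$-th powers in $\Z_2$ using Corollary \ref{hencal}, and then to prove matching upper and lower bounds.

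\emph{Unit $k$-th powers.} Since $k \equiv 2 \bmod 4$, we have $\nu_2(k) = 1$, so Corollary \ref{hencal} says a unit $\alpha \in \Z_2$ is a $k$-th power iff $\alpha \equiv \beta^k \bmod 2^3$ for some $\beta$. For any odd $\beta$ we have $\beta^2 \equiv 1 \bmod 8$, and since $k$ is even, $\beta^k = (\beta^2)^{k/2} \equiv 1 \bmod 8$; conversely $1 = 1^k$. So the unit $k$-th powers in $\Z_2$ are exactly the units congruent to $1$ modulo $8$. Every non-unit $k$-th power is divisible by $2^k$, and since $k \geq 6$, in particular by $8$.

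\emph{Upper bound.} For any $\alpha \in \Z_2$, I would pick the unique $c \in \{0, 1, \ldots, 7\}$ with $\alpha - c \equiv 1 \bmod 8$; then $\alpha - c$ is a unit $k$-th power, say $y^k$, and $\alpha = y^k + c \cdot 1^k$ is a sum of at most $1 + c \leq 8$ $k$-th powers. This simultaneously shows $\Z_2^k = \Z_2$ and $g_{2,1}(k) \leq 8$.

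\emph{Lower bound.} I would take $\alpha = 8 \in \Z_2^k$ and show it requires at least $8$ summands. Given any representation $8 = y_1^k + \cdots + y_n^k$, split the summands into $n_1$ units and $n_2$ non-units. Each unit summand is $\equiv 1 \bmod 8$ and each non-unit summand is $\equiv 0 \bmod 8$, so reducing modulo $8$ yields $n_1 \equiv 0 \bmod 8$. The case $n_1 = 0$ would force $2^k \mid 8$, contradicting $k \geq 6$, so $n_1 \geq 8$ and hence $n \geq 8$. The upper bound applied to $\alpha = 8$ gives a representation with exactly $8$ summands (namely $8 = 1^k + \cdots + 1^k$), so this bound is tight.

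The argument is essentially routine once Corollary \ref{hencal} identifies the unit $k$-th powers; the only real subtlety is that the hypothesis $k \geq 6$ is exactly what separates the "mod $8$" constraint on unit $k$-th powers from the stronger "mod $2^k$" constraint on non-unit $k$-th powers, ruling out a representation of $8$ using non-units alone.
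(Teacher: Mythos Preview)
Your proof is correct and follows essentially the same approach as the paper: identify the unit $k$-th powers as exactly the units $\equiv 1 \bmod 8$ via Corollary~\ref{hencal}, get the upper bound by writing each residue class as a sum of $1$'s, and get the lower bound by showing that $8$ (the paper uses any $a\equiv 8 \bmod 16$) forces the number of unit summands to be a positive multiple of $8$. The only cosmetic differences are that you invoke the standard fact $\beta^2\equiv 1\bmod 8$ for odd $\beta$ where the paper expands $x^k$ directly, and you phrase the upper bound slightly more explicitly.
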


\begin{proof}
Suppose $k \equiv 2 \bmod 4$ and $k \geq 6$. We first claim that a unit $\alpha$ in $\Z_2$ is a $k$-th power if and only if $\alpha \equiv 1 \bmod 8$.  
First, suppose $\alpha \equiv 1 \bmod 2^3$. Since $2\nu_2(k) + 1 = 3$, and 1 is a $k$-th power, Corollary \ref{hencal} implies that $\alpha$ is a $k$-th power in $\Z_2$.  
Conversely, if $x = \sum_{i=0}^{\infty} x_i 2^i$ is a unit in $\Z_2$, then 
\[x^k = 1 + 2kx_1 + 2k(k-1)x_1 + 2^2kx_2 + O(2^3).\]
(Note that for a prime $\pi$ and a positive integer $\ell$, $O(\pi^{\ell})$ implies the two sides are equal up to a multiple of $\pi^{\ell}$.)  Since $k$ is even, then $2^3$ divides both $2^2k$ and $2k + 2k(k-1) = 2k^2$, so $x^k \equiv 1 \bmod 2^3$.

This immediately implies that every element of $\Z_2$ is a sum of at most 8 $k$-th powers, since we can write every residue class mod 8 as a sum of 1s.  Note then that if $2$ divides $x$ in $\Z_2$, then since $k \geq 6$, we have that $2^6$ divides $x^6$.  Therefore to write any $a \equiv 8 \bmod 16$ in $\Z_2$, we need (at least) 8 units to write $a$ as a sum of $k$-th powers.  Therefore $g_{2,1}(k) = 8$.
\end{proof}

When we move from $\Z_2$ to $\Z_2[\sqrt{2}]$, we gain more squares, such as $2 = (\sqrt{2})^2$ and $3 + 2\sqrt{2} = 1 + (\sqrt{2})^2 + (\sqrt{2})^3$; this allow us to write elements of $(\Z_2[\sqrt{2}])^{k}$ using fewer squares.

\begin{theorem} \label{222k}
If $k \equiv 2 \bmod 4$ and $k \geq 6$, then $\gwar{2}{2}{k} = 6$.
\end{theorem}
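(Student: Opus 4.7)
The plan is to compute the $k$-th-power units of $R = \Z_2[\sqrt{2}]$ modulo $\pi^5$ (with $\pi = \sqrt{2}$), and then run a residue-counting argument analogous to the one used to establish $g_{2,1}(k) = 8$.  Since $k \equiv 2 \pmod 4$ forces $\nu_\pi(k) = 2$, Corollary \ref{hencal} reduces the detection of $k$-th powers among units to congruence modulo $\pi^5$.  Writing $k = 2j$ with $j$ odd, and noting that the unit group of $R/\pi^5$ has order $16$, the $j$-th power map is a bijection on units, so the set of $k$-th-power units mod $\pi^5$ equals the set of squares.  A direct expansion of $(1 + c)^2$ for $c = b_1 \pi + b_2 \pi^2 + b_3 \pi^3 + b_4 \pi^4$ with $b_i \in \{0,1\}$ collapses modulo $\pi^5$ to $1 + b_1(\pi^2 + \pi^3)$, giving exactly two unit $k$-th-power residues, namely $\{1,\, 3 + 2\sqrt{2}\}$.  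Moreover $3 + 2\sqrt{2} = (1+\sqrt{2})^2$ is itself a $k$-th power, since $(3 + 2\sqrt{2})^2 \equiv 1 \pmod{\pi^5}$ and $k/2$ is odd.

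For the upper bound, identify $R/\pi^5 \cong \Z/8 \times \Z/4$ via $c + d\sqrt{2} \mapsto (c \bmod 8,\, d \bmod 4)$.  Any non-unit $k$-th power has $\nu_\pi \geq k \geq 6 > 5$, so contributes residue $0$; thus every $\alpha \in R^k$ has residue $(a, b)$ with $b \in \{0, 2\}$.  A sum of $m$ unit summands $\equiv 1$ and $n$ unit summands $\equiv 3 + 2\sqrt{2}$ has residue $(m + 3n \bmod 8,\, 2n \bmod 4)$.  Enumerating the sixteen residues with $b \in \{0, 2\}$, the minimum of $m + n$ realizing each residue is at most $6$, with the value $6$ attained only at $(0, 2)$.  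Given $\alpha \in R^k$ and such an optimal $(m, n)$, I write $\alpha$ as a sum of $m + n$ unit $k$-th powers by fixing all but one summand to equal $1$ or $3 + 2\sqrt{2}$ and using the remaining summand to absorb the $\pi^5$-multiple difference: if $m \geq 1$, set $u_1 = \alpha - (m - 1) - n(3 + 2\sqrt{2})$; otherwise set $v_1 = \alpha - (n-1)(3 + 2\sqrt{2})$.  The adjusted summand is congruent to $1$ or $3 + 2\sqrt{2} \pmod{\pi^5}$ and hence a $k$-th power by Corollary \ref{hencal}.

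For the lower bound, take the witness $\alpha = 2\sqrt{2}$, which has residue $(0, 2)$.  Supposing $2\sqrt{2} = \sum_{i=1}^s \beta_i^k$, partition the summands by residue mod $\pi^5$ into $s_1$ units $\equiv 1$, $s_2$ units $\equiv 3 + 2\sqrt{2}$, and $s_0$ non-units contributing $0$.  Reducing mod $\pi^5$ gives $s_1 + 3 s_2 \equiv 0 \pmod 8$ and $2 s_2 \equiv 2 \pmod 4$; the latter forces $s_2$ odd, and checking $s_2 \in \{1, 3, 5, 7\}$ shows $s_1 + s_2 \geq 6$ in every case, so $s \geq 6$.

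The main obstacle is the bounded case check that every residue $(a, b)$ with $b \in \{0, 2\}$ admits a decomposition with $m + n \leq 6$, together with confirming sharpness at $(0, 2)$; this is most conveniently tabulated explicitly.  A secondary technical point is that the hypothesis $k \geq 6$ is used precisely to guarantee non-unit $k$-th powers have $\nu_\pi > 4$, so they cannot help shift residues modulo $\pi^5$; without this the residue analysis would need to accommodate nontrivial contributions from non-unit $k$-th powers.
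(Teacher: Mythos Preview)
Your proof is correct and follows the paper's strategy: reduce modulo $\pi^5$ via Corollary~\ref{hencal}, identify the unit $k$-th-power residues as $\{1,\,3+2\sqrt{2}\}$, enumerate decompositions of all admissible residues, and use $2\sqrt{2}$ as the lower-bound witness. The presentational differences are that you reach the residue set via the group-theoretic shortcut that the odd cofactor $j=k/2$ acts bijectively on the order-$16$ unit group (the paper instead expands $x^k$ directly, Equation~(\ref{2kpower})), and you package the enumeration through the additive identification $R/\pi^5\cong\Z/8\times\Z/4$ rather than the paper's explicit Table~\ref{2kpowers}; the paper's lower-bound congruence system is exactly your residue equations $s_1+3s_2\equiv 0\pmod 8$, $s_2$ odd.

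One small point to patch: for the residue $(0,0)$ your ``optimal $(m,n)$'' is $(0,0)$, but your construction (adjusting ``all but one summand'') requires $m+n\geq 1$. Simply take $(m,n)=(2,2)$ for that residue when $\alpha\neq 0$; this still gives $m+n=4\leq 6$, matching the paper's treatment of the $\alpha\equiv 0$ row.
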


\begin{proof}
First, note here that $2\nu_{\sqrt{2}}(k) + 1 = 5$, so by Corollary \ref{hencal} we only need to work with $k$-th powers mod $(\sqrt{2})^5$.  Then, the $k$-th power formula in $\Z_2[\sqrt{2}]$ is, for $x = \sum_{i=0}^{\infty} x_i2^i$:

\begin{align}
    x^{k} & = x_0^{k} + kx_0^{k-1}x_1(\sqrt{2}) + \left(\frac{k(k-1)}{2}x_0^{k-2}x_1^2 + kx_0^{k-1}x_2\right) (\sqrt{2})^2 + \dots \notag\\
    & = x_0 + x_0x_1 (\sqrt{2})^2 + x_0x_1(\sqrt{2})^3 + O((\sqrt{2})^5). \label{2kpower}
\end{align}    

We immediately get that the only non-zero $k$-th powers mod $(\sqrt{2})^5$ are 1 and $1+(\sqrt{2})^2 + (\sqrt{2})^3$.

We note that in Equation (\ref{2kpower}) above (and all subsequent formulas for extensions of $\Z_2$) we can ignore the exponents on coefficients, since they are either 0 or 1.  Additionally, note that since $\sqrt{2}$ does not appear in Equation (\ref{2kpower}), it cannot be the sum of $k$-th powers, and we have that 
\[(\Z_2[\sqrt{2}])^{k} = \{ x_0 + x_2 (\sqrt{2})^2 + x_3 (\sqrt{2})^3 + x_4 (\sqrt{2})^4 +\dots \mid x_i \in \{0,1\}\}\]

Table \ref{2kpowers} then demonstrates how to write any element of $(\Z_2[\sqrt{2}])^{k}$ in at most $6$ $k$-th powers.  We apply Corollary \ref{hencal} to the first term in the sum to ensure it is a $k$-th power, and note that both $8 = (\sqrt{2})^6$ and $4\sqrt{2} = (\sqrt{2})^5$ are equivalent to 0 mod $(\sqrt{2})^5$.

\begin{table}[h]
\centering
 \caption{How to write $\alpha \bmod (\sqrt{2})^5$ as sum of $k$-th powers}
\begin{tabular}{||l|l|l||}
\hline
\multicolumn{2}{|c|}{$\alpha \bmod (\sqrt{2})^5$} & \qquad $\alpha$ as a sum of $k$-th powers\\
\hline
$1$ & $1$ & $\alpha$   \\
\hline
$3$  & $1 + (\sqrt{2})^2$ & $(\alpha-2) + 1 + 1$   \\
\hline
$1 + 2\sqrt{2}$ &  $ 1 + (\sqrt{2})^3$ & $(\alpha +2 ) + (3+ 2\sqrt{2}) + (3+ 2\sqrt{2})$   \\
\hline
$5$  & $1 + (\sqrt{2})^4$ & $(\alpha - 4) + 1 + 1 + 1 + 1$   \\
\hline
$3 + 2\sqrt{2}$   & $1 + (\sqrt{2})^2 + (\sqrt{2})^3$ & $\alpha$ \\
\hline
$7$  & $1 + (\sqrt{2})^2+ (\sqrt{2})^4$ & $(\alpha -4 + 2\sqrt{2}) + (3+2\sqrt{2}) + 1$   \\
\hline
$5 + 2\sqrt{2}$  & $1 + (\sqrt{2})^3 + (\sqrt{2})^4$ & $(\alpha - 2) + 1 + 1$   \\ 
\hline
$7 + 2\sqrt{2}$  & $1 + (\sqrt{2})^2 + (\sqrt{2})^3 + (\sqrt{2})^4$ & $(\alpha - 4) + 1 + 1 + 1 + 1$  \\
\hline
$0$ & 0 & $(\alpha +3+ 2\sqrt{2}) + (3 + 2\sqrt{2}) + 1 + 1$ \\
\hline
$2$  & $(\sqrt{2})^2$ & $(\alpha - 1) + 1$ \\
\hline
$2\sqrt{2}$  & $(\sqrt{2})^3$ & $(\alpha +3) + 1 + 1 + 1 + 1 + 1$ \\
\hline
$4$  & $(\sqrt{2})^4$ & $(\alpha - 3) + 1 + 1 + 1$ \\
\hline
$2 + 2\sqrt{2}$  & $(\sqrt{2})^2 + (\sqrt{2})^3$ & $(\alpha +1) + (3 + 2\sqrt{2}) + (3 + 2\sqrt{2}) + 1$ \\
\hline
$6$  & $(\sqrt{2})^2+ (\sqrt{2})^4$ & $(\alpha - 3 + 2\sqrt{2}) + (3 + 2\sqrt{2})$ \\
\hline
$4 + 2\sqrt{2}$   & $(\sqrt{2})^3 + (\sqrt{2})^4$  & $(\alpha - 1) + 1$ \\ 
\hline
$6 + 2\sqrt{2}$  & $(\sqrt{2})^2 + (\sqrt{2})^3 + (\sqrt{2})^4$  & $(\alpha - 3) + 1 + 1 + 1$ \\
\hline
   \end{tabular}
    \label{2kpowers}
\end{table}

To complete the proof, we will show that we cannot write $2\sqrt{2} = (\sqrt{2})^3$ as a sum of fewer than 6 $k$-th powers.  Suppose then, that there exist $x,y,z,s,t \in \Z_{2}[\sqrt{2}]$ such that $x^{k} + y^{k} + z^{k} + s^{k} + t^{k} = (\sqrt{2})^3$.  Using the $k$-th power expansion in Equation (\ref{2kpower}), and letting
\begin{align*}
    \Lambda_0 & = x_0 + y_0 + z_0 + s_0 + t_0, \text{ and} \\
    \Lambda_{01} & = x_0x_1 + y_0y_1 + z_0z_1 + s_0s_1 + t_0t_1,
\end{align*}
we have
\begin{equation}
    \Lambda_0 + \Lambda_{01}(\sqrt{2})^2 + \Lambda_{01}(\sqrt{2})^3 \equiv (\sqrt{2})^3 \bmod (\sqrt{2})^5  \label{222klb}
\end{equation}
Note that $\Lambda_0 \leq 5$, and $\Lambda_{01} \leq \Lambda_0$.  We can then split Equation (\ref{222klb}) into the integral and non-integral part (even and odd powers of $\sqrt{2}$), since they are linearly independent over $\Z$.  (We will use this trick again later.)
\begin{align*}
    \Lambda_0 + \Lambda_{01}(\sqrt{2})^2 & \equiv 0 \bmod (\sqrt{2})^5  \\
    \Lambda_{01}(\sqrt{2})^3 & \equiv (\sqrt{2})^3  \bmod (\sqrt{2})^5
\end{align*}
Note that since both sides of the first congruence are integers, if they are congruent mod $(\sqrt{2})^5 = 4\sqrt{2}$, then they must be congruent mod $(\sqrt{2})^6 = 8$.  And, if we divide through the second congruence by $(\sqrt{2})^3$, we can convert both into integral congruences:
\begin{align*}
    \Lambda_0 + 2\Lambda_{01} & \equiv 0 \bmod 8,   \\
    \Lambda_{01} & \equiv 1  \bmod 2.
\end{align*}
Therefore $\Lambda_0$ must be even and non-zero, since $\Lambda_{01}$ must be positive.  But if $\Lambda_0 = 2$, the first congruence cannot be satisfied (since the second would imply $\Lambda_{01} = 1$), and if $\Lambda_0 = 4$, then $\Lambda_{01}$ would have to be even, contradicting the second congruence.  We therefore cannot write $(\sqrt{2})^3$ as a sum of 5 $k$-th powers, which combined with Table \ref{2kpowers} above gives us $g_{2,2}(k) = 6$.  \end{proof}

A similar proof shows that $g_{2,3}(6) = 6$, where we make use of the fact that in $\Z_2[\sqrt[3]{2}]$ we get that $4 = (\sqrt[3]{2})^6$ is a sixth power.

For the following two theorems, we use the $k$-th power expansion of $x^k$ in $\Z_2[\sqrt[3]{2}]$ when $k \equiv 2 \bmod 4$ and $k \geq 6$, and let $\omega = \sqrt[3]{2}$.
\begin{align}
    x^{k} & = x_0^{k} + kx_0^{k-1}x_1\omega + \left(\frac{k(k-1)}{2}x_0^{k-2}x_1^2 + kx_0^{k-1}x_2\right) \omega^2 + \dots \notag\\
    & = x_0 + x_0x_1 \omega^2 + (x_0x_2 + \delta_e x_0x_1)\omega^4 \notag \\ 
    & \qquad + (x_0x_2 + \delta_o x_0x_1)\omega^5 + (x_0x_1x_2 + \delta_o x_0x_1)\omega^6 + 
    O(\omega^7), \label{3kpower}
\end{align}  
where $\delta_e = \begin{cases} 1 & \textrm{ if } k \equiv 2 \bmod 8 \\ 0 & \textrm{ if } k \equiv 6 \bmod 8 \end{cases}$, and $\delta_o = \begin{cases} 0 & \textrm{ if } k \equiv 2 \bmod 8 \\ 1 & \textrm{ if } k \equiv 6 \bmod 8\end{cases}$.  Note that $\delta_o = 1 - \delta_e$ is the coefficient of $2^2 = 4$ in the 2-adic expansion of $k$.

\begin{theorem} \label{236}
We have $g_{2,3}(6) = 6$.
\end{theorem}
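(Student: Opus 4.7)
The plan mirrors the proof of Theorem~\ref{222k}, adapted from $\Z_2[\sqrt 2]$ to $R = \Z_2[\omega]$ with $\omega = \sqrt[3]{2}$. Since $\nu_\omega(6) = \nu_\omega(2) = 3$, Corollary~\ref{hencal} reduces the problem to working modulo $\omega^7$. Specializing Equation~(\ref{3kpower}) to $k=6$ (so $\delta_e = 0$ and $\delta_o = 1$), the nonzero unit sixth powers mod $\omega^7$ are the four values indexed by $(x_1,x_2)\in\{0,1\}^2$:
\[
P_1 = 1,\quad P_2 = 1+\omega^4+\omega^5,\quad P_3 = 1+\omega^2+\omega^5+\omega^6,\quad P_4 = 1+\omega^2+\omega^4;
\]
additionally $\omega^6 = 4$ is a non-unit sixth power, and any sixth power of valuation at least $2$ vanishes mod $\omega^7$. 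Because no sixth power contributes to the $\omega^1$-digit, and carrying (via $2=\omega^3$) never lands at position $1$, $R^6 \bmod \omega^7$ consists precisely of the residue classes with $\omega^1$-digit zero.

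For the upper bound, I would construct a table analogous to Table~\ref{2kpowers} giving, for each such residue class, a representation as a sum of at most $6$ sixth powers, and then lift to an exact representation via the adjustment trick from Theorem~\ref{222k} (replace one summand by a unit whose residue mod $\omega^7$ Corollary~\ref{hencal} certifies to be an actual sixth power). The new ingredient $\omega^6 = 4$, highlighted in the paragraph preceding the theorem, covers the $\omega^6$-digit with a single summand instead of four copies of $1$, and is what brings the bound down from $g_{\Z_2}(6) = 8$.

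For the lower bound, I would show that $\alpha = \omega^2+\omega^3 = 2 + \omega^2$ requires at least $6$ sixth powers. Suppose $\alpha = y_1^6 + \cdots + y_N^6$ with $N \le 5$. Classify the $y_j$ by their sixth-power residues: let $a,b,c,d$ count units with residue $P_1,P_2,P_3,P_4$ respectively, and let $e$ count $y_j$ with $\nu_\omega(y_j)=1$ (higher-valuation terms vanish mod $\omega^7$). Set $\Lambda_0 = a+b+c+d$, $\Lambda_{01} = c+d$, $\Lambda_{02} = b+d$, and $\Lambda_{012} = d$. Summing the expansions of Equation~(\ref{3kpower}), rewriting the result in the $\Z_2$-basis $\{1,\omega,\omega^2\}$ of $R$, and matching against $\alpha = 2+\omega^2$ gives three congruences (the integer, $\omega$-, and $\omega^2$-components):
\[
\Lambda_0 + 4(\Lambda_{01}+\Lambda_{012}+e) \equiv 2 \bmod 8,\quad 2\Lambda_{02} \equiv 0 \bmod 4,\quad 3\Lambda_{01}+2\Lambda_{02} \equiv 1 \bmod 4.
\]
The latter two force $\Lambda_{02}$ even and $\Lambda_{01}\equiv 3 \bmod 4$; together with $\Lambda_{01} \le \Lambda_0 \le N \le 5$, this pins $\Lambda_{01} = 3$ and $\Lambda_0 \in \{3,4,5\}$. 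The first congruence then demands $4(\Lambda_{012}+e) \bmod 8 \in \{3,2,1\}$ in the three cases, all impossible since $4\Z \bmod 8 \subseteq \{0,4\}$. This contradiction gives $N \ge 6$.

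The main obstacle will be assembling and verifying the upper bound table, since $R/\omega^7$ has many more residue classes than the quotient used in Theorem~\ref{222k}; by contrast, the lower bound is a clean variant of the $\Lambda$-argument once $\omega^2+\omega^3$ is identified as a target that pins $\Lambda_{01} \equiv 3 \bmod 4$ and forces a parity conflict in the integer-component congruence.
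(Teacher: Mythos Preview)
Your proposal is correct and follows essentially the same route as the paper: the same identification of unit sixth powers via Equation~(\ref{3kpower}) and Corollary~\ref{hencal}, the same target $\omega^2+\omega^3$ for the lower bound, and the same $\Lambda$-congruence argument split along the $\Z_2$-basis $\{1,\omega,\omega^2\}$, with the upper bound deferred to a table/computer check. Your explicit accounting of the non-unit contribution $e$ (from summands with $\nu_\omega(y_j)=1$, each contributing $\omega^6$) is in fact a small refinement over the paper's presentation, which suppresses this term; it does not change the outcome, since the decisive conclusions $\Lambda_0\equiv 2\bmod 4$ and $\Lambda_{01}\equiv 3\bmod 4$ (and hence the contradiction with $\Lambda_{01}\le\Lambda_0\le 5$) follow either way.
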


\begin{proof}
Let $\omega = \sqrt[3]{2}$.  We first determine which elements in $\Z_{2}[\omega]$ are sixth powers.  Using Corollary \ref{hencal}, since $2\nu_{\omega}(6) + 1 = 7$, we can determine using Equation (\ref{3kpower}) that every unit congruent to any of $1, 1+\omega^2+\omega^5 + \omega^6, 1+\omega^2 + \omega^4$, or $1+\omega^4 + \omega^5$ mod $\omega^7$ is a sixth power in $\Z_{2}[\omega]$.  This implies, similar to the $e=2$ case, that
\[(\Z_{2}[\omega])^6 = \{ x_0 + x_2 \omega^2 + x_3 \omega^3 + \dots \mid x_i \in \{0,1\} \}. \]

With the use of SageMath, adding all possible combinations of these sixth powers (and $4 = \omega^6$) yields a table similar to Table \ref{2kpowers} above
showing that any element $\alpha \in (\Z_{2}[\omega])^6$ can be written in at most 6 sixth powers, and that 4 residue classes, including $\omega^2 + \omega^3$, might require at least 6 sixth powers.  

To confirm that $g_{2,3}(6) = 6$, we assume 
\begin{equation} \label{236lb}
    x^6 + y^6 + z^6 + s^6 + t^6 = \omega^2 + \omega^3
\end{equation}
and seek a contradiction.  Similar to the method used in the proof of Theorem \ref{222k}, we can examine Equation (\ref{236lb}) mod $\omega^7$ and split it into 3 linearly independent integral congruences (where $\Lambda_{02}$ and $\Lambda_{012}$ are defined similarly to $\Lambda_0$ and $\Lambda_{01}$ from Theorem \ref{222k}):  
\begin{align*}
    \Lambda_0 + 4(\Lambda_{012} + \Lambda_{01}) & \equiv 2 \bmod 8, \tag{$\omega^0, \omega^3, \omega^6$}\\
    \Lambda_{02} & \equiv 0 \bmod 2, \tag{$\omega^1, \omega^4$}\\
    \Lambda_{01} + 2(\Lambda_{01} + \Lambda_{02}) & \equiv 1 \bmod 4. \tag{$\omega^2, \omega^5$}
\end{align*}

These congruences imply that $\Lambda_0 \equiv 2 \bmod 4$ and $\Lambda_{01} \equiv 3 \bmod 4$.  But this cannot happen since $0 \leq \Lambda_{01} \leq \Lambda_0 \leq 5$, by assumption.  So we have that $g_{2,3}(6) = 6$.
\end{proof}

In contrast to the situation over $\Z_2$ and $\Z_2[\sqrt{2}]$, the Waring number $g_{2,3}(k)$ increases when $k$ goes from 6 to 10.   When $k=6$, we have that 4 is a $k$-th power, which we lose for higher $k$.

\begin{theorem}
If $k \equiv 2 \bmod 4$ and $k \geq 10$, then $g_{2,3} (k) = 8$.
\end{theorem}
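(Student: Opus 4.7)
The plan is to follow the pattern of Theorem~\ref{236}: determine the unit $k$-th powers modulo $\omega^7$ from Corollary~\ref{hencal} and Equation~(\ref{3kpower}), then establish the upper bound by enumerating residue classes and the lower bound by exhibiting a specific element requiring $8$ $k$-th powers. Write $\omega = \sqrt[3]{2}$. Since $2\nu_\omega(k) + 1 = 7$ and Equation~(\ref{3kpower}) applies for all $k \equiv 2 \bmod 4$ with $k \geq 6$, the same four residues of unit $k$-th powers occur modulo $\omega^7$ as in Theorem~\ref{236}:
\[P = \{1,\; 1+\omega^2+\omega^4,\; 1+\omega^4+\omega^5,\; 1+\omega^2+\omega^5+\omega^6\}.\]
The new feature when $k \geq 10$ is that every non-unit $k$-th power has $\omega$-valuation at least $10$, hence is zero modulo $\omega^7$; in particular $4 = \omega^6$ is no longer a $k$-th power, so sums of $k$-th powers reduce modulo $\omega^7$ to elements of the additive semigroup generated by $P$ alone.

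For the upper bound, I would extend the SageMath enumeration of Theorem~\ref{236} by tabulating every residue class in this semigroup and exhibiting for each a representation as a sum of at most $8$ $k$-th powers. Given $\alpha \in (\Z_2[\omega])^k$ of small $\omega$-valuation, I would fix the tabulated representative $\sigma$ in its class and perturb a single unit summand of $\sigma$ via Corollary~\ref{hencal} to absorb the discrepancy $\alpha - \sigma \in \omega^7 \Z_2[\omega]$. Elements of higher $\omega$-valuation reduce to this case via the scaling trick from the proof that $g_{2,1}(4) = 15$.

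For the lower bound I would take $T = 8$, which trivially lies in $(\Z_2[\omega])^k$ as a sum of eight $1$'s. Suppose $T = \sum_{i=1}^{n} y_i^k$: any non-unit summand has $\omega$-valuation at least $10 > 9 = \nu_\omega(T)$, so the sum of unit summands must be $\equiv 0 \pmod{\omega^7}$. Letting $n_j$ count the unit summands whose $k$-th power lies in the $j$-th class of $P$ (labeled in the order above), and working in coordinates $(a_0, a_1, a_2) \in \Z/8 \oplus \Z/4 \oplus \Z/4$ from the basis $\{1, \omega, \omega^2\}$ (so that $(\omega^7) = 8\Z_2 + 4\omega\Z_2 + 4\omega^2\Z_2$), the condition $\sum_{j=1}^{4} n_j p_j \equiv 0 \pmod{\omega^7}$ yields
\begin{align*}
n_1 + n_2 + n_3 + 5n_4 &\equiv 0 \pmod 8, \\
n_2 + n_3 &\equiv 0 \pmod 2, \\
2n_2 + n_3 + 3n_4 &\equiv 0 \pmod 4.
\end{align*}
Reducing the third congruence modulo $2$ and combining with the second gives $n_2 \equiv n_3 \equiv n_4 \pmod 2$; reducing the first modulo $4$ gives $m := n_1+n_2+n_3+n_4 \equiv 0 \pmod 4$. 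The case $m = 0$ is impossible by valuations. The case $m = 4$ splits into an all-even sub-case (then $n_4 \in \{0,2,4\}$, each failing the first congruence) and an all-odd sub-case (then $n_4 = 1$ forces $(n_2, n_3) = (1,1)$, violating the third congruence, while $n_4 = 3$ forces $n_2 + n_3 \geq 2 > 1 = n_1 + n_2 + n_3$). Hence $m \geq 8$ and $n \geq 8$.

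The principal obstacle is the upper-bound tabulation: because $\omega^6 = 4$ is no longer available as a $k$-th power, several classes that were inexpensive in Theorem~\ref{236} now require more summands, and the full table of representatives is correspondingly larger. This is essentially a computer-algebra task; the lower-bound analysis above independently certifies that $8$ is optimal.
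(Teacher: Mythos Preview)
Your approach is essentially the paper's: same set $P$ of unit $k$-th power residues mod $\omega^{7}$, upper bound by enumeration, lower bound by showing $8=\omega^{9}$ cannot be a sum of fewer than eight $k$-th powers. The paper organizes the lower bound via the ``integral part'' and the $\Lambda$-symbols (concluding that exactly four of the seven summands must be units and then deriving a parity contradiction on the $\omega^{6}$-coefficient), whereas you work directly in $\Z/8\oplus\Z/4\oplus\Z/4$ with the multiplicities $n_{j}$; the two arguments are equivalent.

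Two small points. First, your third congruence has a transposition: writing $p_{2}=1+2\omega+\omega^{2}$, $p_{3}=1+2\omega+2\omega^{2}$, $p_{4}=5+3\omega^{2}$ gives the $\omega^{2}$-coefficient $n_{2}+2n_{3}+3n_{4}$, not $2n_{2}+n_{3}+3n_{4}$. Fortunately both expressions have the same reduction mod $2$ and the same value at $(n_{1},n_{2},n_{3},n_{4})=(1,1,1,1)$, so your case analysis survives unchanged. Second, the ``scaling trick'' is unnecessary (and does not apply, since valuations between $7$ and $k-1$ are not multiples of $k$): the residue-class enumeration already handles every $\alpha$, including the zero class via $\alpha=(\alpha-7)+1+\cdots+1$ with $\alpha-7\equiv 1\pmod{\omega^{7}}$ a $k$-th power by Corollary~\ref{hencal}. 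The paper simply lets the SageMath table cover all residue classes and omits any scaling step.
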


\begin{proof}
Let $\omega = \sqrt[3]{2}$.  We again use Corollary \ref{hencal} and Equation \ref{3kpower} to determine that any unit congruent mod $\omega^7$ to 1, $1 + \omega^2 + \omega^4$, $1 + \omega^4 + \omega^5$, or $1 + \omega^2 + \omega^5 + \omega^6$ is a $k$-th power in $\Z_2[\omega]$.   Note that here, unlike Theorem \ref{236}, we have no non-unit $k$-th powers to help us.  The subset of the sums of $k$-th powers in $\Z_2[\omega]$ is the same as Theorem \ref{236}, though:
\[(\Z_{2}[\omega])^k = \{ x_0 + x_2 \omega^2 + x_3 \omega^3 + \dots \mid x_i \in \{0,1\} \}. \]

A calculation via SageMath yields a table similar to Table \ref{2kpowers} showing that all residue classes mod $\omega^7$ can be written in at most 8 $k$-th powers.  We then claim that $8 = \omega^9$ cannot be written as the sum of 7 or fewer $k$-th powers.  So, assume
\begin{equation}x^k + y^k + z^k + q^k + r^k + s^k + t^k = 8 = \omega^9. \label{23k8}
\end{equation}

Then, for $\alpha \in \Z_2[\omega]$, let 
\[\widehat{\alpha} = \alpha_0 + \alpha_3 \omega^3 + \alpha_6 \omega^6  = \alpha_0 + 2 \alpha_3 + 4 \alpha_6 \in \{0, 1, \dots, 7\}.\]
We can then look at the integral part of Equation (\ref{23k8}), which implies
\begin{equation}
\widehat{x^k} + \widehat{y^k} + \dots + \widehat{t^k} \equiv 0 \bmod 8. \label{23k8hat}
\end{equation}

Given our list of $k$-th powers above, we then have that $\widehat{x^k}, \dots, \widehat{t^k} \in \{0,1,5\}$.  Additionally, if $\widehat{\alpha^k} = 0$, then $\omega$, and therefore also $\omega^k$, must divide $\alpha^k$, so $\alpha^k \equiv 0 \bmod \omega^{10}$.  We therefore cannot have every term on the left side of Equation (\ref{23k8hat}) equal to 0, since then the sum would be $0 \not\equiv 8 = \omega^9 \bmod \omega^{10}$.

The only other way to satisfy Equation (\ref{23k8hat}) is to have exactly 4 units.  We therefore assume without loss of generality $\widehat{x^k}, \widehat{y^k}, \widehat{z^k}, \widehat{q^k} \in \{1,5\}$ and $\widehat{r^k}, \widehat{s^k}, \widehat{t^k}  = 0$.  We then have $\omega$ dividing $r$, $s$, and $t$, so they will make no contributions to the sum mod $\omega^7$.  Then, for the sum to be 0 mod 8, the number of 1s and 5s among $\widehat{x^k}, \widehat{y^k}, \widehat{z^k}$, and $\widehat{q^k}$ must then both be odd.

Using the notation from Theorems \ref{222k} and \ref{236}, we then have $\Lambda_0 = x_0 + y_0 + z_0 + q_0 = 4$, and from the 
$\omega^2$ coefficient of Equation (\ref{3kpower}), we get that $\Lambda_{01} \equiv 0 \bmod 2$.  This then implies in the $\omega^4$ coefficient of Equation (\ref{3kpower}) we have $\Lambda_{02} + \delta_e\Lambda_{01} \equiv \Lambda_{02} \equiv 0 \bmod 2$.  We then have the following two congruences:
\begin{align}
    \Lambda_{01} + 2\Lambda_{02} & \equiv 0 \bmod 4, \tag{$\omega^2, \omega^5$} \\
    \Lambda_{012} & \equiv 0 \bmod 2. \tag{$\omega^6$}
\end{align}
Since $\Lambda_{02} \equiv 0 \bmod 2$, the first congruence above implies $\Lambda_{01} \equiv 0 \bmod 4$.  This then implies $x_1 = y_1 = z_1 = q_1$; if they are all 0, then none of $\widehat{x^k}, \widehat{y^k}, \widehat{z^k}, \widehat{q^k}$ could equal 5, a contradiction.  If $x_1 = y_1 = z_1 = q_1 = 1$, then the second congruence above implies that an even number of $x_2, y_2, z_2, q_2$ are 0 and an even number of them are 1.  But this implies that among $\widehat{x^k}, \widehat{y^k}, \widehat{z^k}$, and $ \widehat{q^k}$ we have an even number of 1s and an even number of 5s, a contradiction. Therefore $8 = \omega^9$ cannot be written as the sum of 7 $k$-th powers, and so $g_{2,3}(k) = 8$.
\end{proof}

We then move to the vertical proofs, for $k=2$ (squares) as $e$ varies.  This requires a different method for the upper bound for the Waring number, as we can no longer calculate all possible combinations of squares.

For the rest of the section, let $\omega = \sqrt[e]{2}$.

\begin{theorem} \label{2e2}
    We have $g_{2,e}(2) = \begin{cases}
        4 & \text{for odd } e \\
        3 & \text{for even } e \\
    \end{cases}$
\end{theorem}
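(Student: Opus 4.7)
The plan is to treat the two parities of $e$ separately, proving a lower bound via an obstruction and an upper bound constructively using Hensel's lemma (Corollary \ref{hencal}). First I would determine the structure of squares in $\Z_2[\omega]$ modulo $\omega^{2e+1}$ by expanding $x^2 = \sum_n (\sum_{i+j=n} x_i x_j) \omega^n$ in $\omega$-adic digits and using the relation $2 = \omega^e$, which produces carries that move from position $n$ to position $n+e$ (rather than $n+1$, as in ordinary 2-adic arithmetic). Two structural facts emerge: every unit square is $\equiv 1 \pmod{\omega}$, and for even $e$ the element $2 = (\omega^{e/2})^2$ is itself a square, whereas for odd $e$ it is not. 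This extra square for even $e$ is the mechanism driving the Waring number down from $4$ to $3$.

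For the lower bound when $e$ is odd, I would verify that $7 = 1 + \omega^e + \omega^{2e}$ is not a sum of three squares. Writing a hypothetical $7 = s_1^2 + s_2^2 + s_3^2$ and tracking the position-$0$ and position-$e$ digits modulo $\omega^{2e+1}$, each unit square contributes $1$ to position $0$, and the carry rule $2 = \omega^e$ forces a parity constraint at position $e$. A short case check on the number of unit squares among $s_1, s_2, s_3$ and on the values of their low-order digits rules out every combination, generalizing the classical $\Z_2$ argument that $7$ is not a sum of three elements of $\{0, 1, 4\}$ modulo $8$. For the lower bound when $e$ is even, I would exhibit an element lying in $(\Z_2[\omega])^2$ that cannot be written as a sum of two squares: for $e = 2$ the element $\omega^2 + \omega^3$ works, verified by enumerating the six squares modulo $\omega^5$ and their pairwise sums, while writing it as $(1+\omega)^2 + (\omega + \omega^2)^2 + c^2$ for a final unit square $c^2$ produced by Corollary \ref{hencal}. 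For general even $e$ the analog (e.g., $\omega^2 + \omega^{e+1}$) should be obstructed at a position that unit squares can only reach via a single cross term.

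For the upper bound, since the set of squares modulo $\omega^{2e+1}$ grows with $e$ and cannot be enumerated uniformly, I would argue directly via Hensel: given $\alpha \in (\Z_2[\omega])^2$, construct $s_1, \dots, s_{N-1}$ (with $N = 4$ for odd $e$ and $N = 3$ for even $e$) so that $\beta := \alpha - s_1^2 - \cdots - s_{N-1}^2$ is a unit congruent modulo $\omega^{2e+1}$ to a unit square; Corollary \ref{hencal} then produces $s_N$ with $s_N^2 = \beta$. To handle $\alpha$ of large valuation, I would apply the scaling trick from the proof that $g_{2,1}(4) = 15$, dividing out $\omega^{2\lfloor \nu_\omega(\alpha)/2\rfloor}$ to reduce to bounded precision. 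The hardest part is the casework in this upper-bound step: verifying for every residue class of $\alpha$ modulo $\omega^{2e+1}$ that we can choose $s_1, \dots, s_{N-1}$ landing in a unit-square residue class. The key leverage for even $e$ is that $\omega^{e/2}$ provides a square root of $2$, absorbing an additional degree of freedom at position $e$ that is unavailable for odd $e$.
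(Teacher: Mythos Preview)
Your overall architecture matches the paper's: split by parity of $e$, prove lower bounds via explicit obstructions, and prove upper bounds by producing a unit remainder to which Corollary~\ref{hencal} applies. The paper's lower-bound witnesses are $7$ for odd $e$ (as you chose) and $\omega^{e+1} = 2\omega$ for even $e$; you instead suggest $\omega^2 + \omega^{e+1}$ and say it ``should be obstructed,'' but do not verify this. The paper's choice $\omega^{e+1}$ gives a cleaner argument: the coefficient equations at positions $0$, $1$, $2$ modulo $\omega^{2e+1}$ force $x_0=y_0$, $x_1=y_1$, and $x_0x_1+y_0y_1$ odd, which is impossible.

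The main gap is in the upper bound. You correctly note that residue classes modulo $\omega^{2e+1}$ cannot be enumerated uniformly in $e$, but then propose ``casework \dots verifying for every residue class,'' which is exactly the non-uniform strategy you just ruled out. The paper instead gives a digit-by-digit inductive construction: for a unit $\alpha$ with $\alpha_0=1$ (and additionally $\alpha_e=0$ when $e$ is odd), it builds $x,y$ explicitly with $x^2+y^2 \equiv \alpha \bmod \omega^{2e+1}$ by setting $x_0=1$, $y_0=0$, fixing $y_i=0$ for a prescribed parity of $i<e$, and then solving for each successive $x_n$, $y_n$ from the target coefficients $\alpha_{2n}$ and $\alpha_{n+e}$. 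The point is that each coefficient equation in the square expansion is \emph{linear} in exactly one not-yet-chosen digit, so the system is triangular and the construction is uniform in $e$. Non-unit $\alpha$ are then handled by subtracting $1$, $2$, or $3$ copies of $1$ (and, for the zero residue class, scaling by $\omega^{2m}$), which is where the third square for even $e$ and the fourth for odd $e$ enter.

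A secondary gap: the odd-$e$ lower bound for $7$ is not ``a short case check.'' The paper obtains $x_0=y_0=z_0=1$ immediately, but then needs an induction (using the coefficient equations at both even and odd positions below $e$) to force $x_i=y_i=z_i=0$ for all $1 \leq i < \frac{e-1}{2}$ before deriving $3 \equiv 7 \bmod \omega^{2e+1}$ at position $2e$. Your $\Z_2$ intuition (checking against $\{0,1,4\}$ modulo $8$) does not directly transfer, because the carry structure in $\Z_2[\omega]$ spreads the constraint across $\sim e$ positions.
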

\medskip

We treat the odd and even cases separately, and build the proof out of the Lemmas that follow.  We first note the equation for squares in each case.  When $e$ is even, we have
\begin{align}
    x^2 &= x_0^2 + x_1^2\omega^2 + x_2^2\omega^4 + \dots + x_{\frac{e}{2}}^2 \omega^e \label{evenesquare}\\
    & + x_0x_1\omega^{e+1}  + (x_0x_2 + x_{\frac{e+2}{2}}^2) \cdot \omega^{e+2} + (x_0x_3 + x_2x_1) \cdot \omega^{e+3} \notag\\
    & + (x_1x_3 + x_0x_4 + x_{\frac{e+4}{2}}^2) \cdot \omega^{e+4} + \dots \notag\\
    &+ (x_e^2 + x_0x_e + x_1x_{e-1} + \cdots + x_{\frac{e}{2} - 1}x_{\frac{e}{2} + 1}) \cdot \omega^{2e} + O(\omega^{2e+1})  \notag
\end{align}

When $e$ is odd, we have
\begin{align}
    x^2 & = x_0^2 + x_1^2\omega^2 + x_2^2\omega^4 + \dots + x_{\frac{e-1}{2}}^2\omega^{e-1} \label{oddsquare}  \\
    & + (x_{\frac{e+1}{2}}^2 + x_0x_1)\omega^{e+1} + x_0x_2 \omega^{e+2}  \notag \\
    & + (x_{\frac{e+3}{2}}^2 + x_0x_3 + x_1x_2)\omega^{e+3} + (x_0x_4+x_1x_3)\omega^{e+4} + \dots \notag \\
    & + (x_e^2 + x_0x_e + x_1x_{e-1} + \dots + +x_{\frac{e-1}{2}}x_{\frac{e+1}{2}}) \omega^{2e} + O(\omega^{2e+1}) \notag 
\end{align}

Applying Corollary \ref{hencal}, if $c \in \Z_2[\omega]$ is a unit congruent to a square mod $\omega^{2e+1}$, then $c$ is a square in $\Z_2[\omega]$.

\begin{lemma} \label{r2}
If $e \geq 2$ is even, we have 
\[R^2_{2,e} = \{\alpha_0 + \alpha_2 \omega^2 + \dots + \alpha_{e-2}\omega^{e-2} + \alpha_e\omega^e + \alpha_{e+1}\omega^{e+1} + \dots \mid \alpha_i \in \{0,1\}\}.\]

If $e \geq 3$ is odd, we have
\[R^2_{2,e} = \{\alpha_0 + \alpha_2 \omega^2 + \dots +\alpha_{e-3}\omega^{e-3} + \alpha_{e-1}\omega^{e-1} + \alpha_e\omega^e + \alpha_{e+1}\omega^{e+1} + \dots \mid \alpha_i \in \{0,1\}\}.\]
\end{lemma}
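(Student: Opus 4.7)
I would establish the two inclusions between $R^2_{2,e}$ and the claimed set separately. For the inclusion $R^2_{2,e} \subseteq (\text{claimed set})$, the expansion formulas (\ref{evenesquare}) and (\ref{oddsquare}) make it clear that the coefficient of $\omega^i$ in $x^2$ vanishes for every odd $i < e$. This property is preserved under addition because $2 = \omega^e$, so carries in the canonical $\omega$-adic representation propagate from position $j$ to position $j + e$; no carry can reach an odd position $i < e$, since that would require a carry from position $i - e < 0$. Hence any finite sum of squares lies in the claimed set.

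For the reverse inclusion I plan a constructive argument using a small collection of explicit building blocks. Every even power $\omega^{2j} = (\omega^j)^2$ is itself a square; for odd $e$ and odd $m \geq e$ we have $\omega^m = 2\omega^{m-e} = \omega^{m-e} + \omega^{m-e}$, a sum of two squares since $m - e$ is then even; and for even $e$ with odd $m \geq e+1$, I would use identities such as $(1 + \omega^{m-e})^2 = 1 + \omega^m + \omega^{2(m-e)}$ in conjunction with squares obtained from Hensel's Lemma. Given an arbitrary $\alpha$ in the claimed set, I would truncate $\alpha$ modulo $\omega^{2e+1}$, match the truncation by a finite combination of these building blocks, and finally apply Corollary \ref{hencal} to perturb a chosen unit square $x_1^2$ so as to absorb the remaining tail $\omega^{2e+1}\gamma$: replacing $x_1$ by $x_1 + \delta$ with $\nu_\omega(\delta) \geq e+1$ changes $x_1^2$ by $2x_1\delta + \delta^2$, which Hensel's Lemma shows can realize any prescribed element of $\omega^{2e+1}\Z_2[\omega]$, since $|2x_1|_p^2 = |\omega|_p^{2e}$ exceeds $|\omega^{2e+1}|_p$.

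The hard part will be the reverse direction when $e$ is even. There the doubling identity does not directly exhibit $\omega^m$ as a sum of squares for odd $m \geq e+1$, because $\omega^{m-e}$ is then itself an odd power. One needs a delicate case analysis using identities like $(1 + \omega^j)^2 = 1 + \omega^{e+j} + \omega^{2j}$ together with additions of $1$, of even powers, and (if needed) of extra Hensel-guaranteed squares such as $5^2 + (2\sqrt{-7})^2 = -3$ in $\Z_2$, arranging that all unwanted allowed terms cancel modulo $\omega^{2e+1}$ and that at least one unit summand is retained to enable the final Hensel adjustment.
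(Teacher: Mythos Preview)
The paper's own proof is a single sentence citing the square-expansion formulas~(\ref{evenesquare}) and~(\ref{oddsquare}). That sentence really only justifies the inclusion of $R^2_{2,e}$ in the claimed set: each square visibly has zero coefficient at every odd position below $e$, and your carry argument (carries jump by $e$, so none land in positions $0,\dots,e-1$) is exactly the closure-under-addition step the paper leaves implicit. On this direction you and the paper agree.

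For the reverse inclusion the paper does not actually argue at this point; it is established as a byproduct of the later Lemmas~\ref{evenub} and~\ref{oddub}, which take an $\alpha$ of the stated shape with $\alpha_0=1$ and explicitly solve the coefficient equations from~(\ref{evenesquare}) or~(\ref{oddsquare}) for $x,y$ with $x^2+y^2\equiv\alpha\pmod{\omega^{2e+1}}$. So the paper's route is direct coefficient-matching, not assembly from building blocks.

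Your building-block approach is a genuine alternative and is essentially complete for odd $e$: even powers are squares, and for odd $m\ge e$ the identity $\omega^m=\omega^{m-e}+\omega^{m-e}$ with $m-e$ even exhibits $\omega^m$ as a sum of two squares; your Hensel absorption then handles the tail. For even $e$ you have the right identity $(1+\omega^j)^2=1+\omega^{2j}+\omega^{e+j}$ but stop short of the bookkeeping. A cleaner finish than the case analysis you anticipate: observe that $R^2_{2,e}\bmod\omega^{2e+1}$ is a sub-semigroup containing $0$ of a finite abelian group, hence automatically a subgroup; it then suffices to check that the squares $\omega^{2j}$ together with $(1+\omega^j)^2$ for odd $j<e$ generate the claimed set modulo $\omega^{2e+1}$, which is immediate since modulo the even-power squares the latter contribute exactly the missing $\omega^{e+1},\omega^{e+3},\dots,\omega^{2e-1}$.

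Two small remarks. Your $\delta$-perturbation paragraph is just Corollary~\ref{hencal} unpacked: once $\sum x_i^2\equiv\alpha\pmod{\omega^{2e+1}}$ with $x_1$ a unit, apply the corollary directly to $\alpha-\sum_{i\ge2}x_i^2$. And the identity $5^2+(2\sqrt{-7})^2=-3$ in $\Z_2$ is a red herring; nothing of that sort is needed.
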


\begin{proof}
This follows directly from Equations (\ref{evenesquare}) and (\ref{oddsquare}).
\end{proof}

We then start by determining the lower bound in each case.

\begin{lemma} \label{evenlb}
    When $e$ is even, $\omega^{e+1} = 2\omega$ cannot be written as the sum of 2 squares.
\end{lemma}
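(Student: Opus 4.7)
The plan is to suppose for contradiction that $x^2 + y^2 = \omega^{e+1} = 2\omega$ for some $x, y \in \Z_2[\omega]$, and derive an impossibility by passing to the subring $R' := \Z_2[\omega^2]$. Since $e$ is even, $R'$ is the ring of integers of the intermediate totally ramified extension of $\Q_2$ of degree $e/2$, with uniformizer $\pi := \omega^2$ satisfying $\pi^{e/2} = 2$, and $\{1,\omega\}$ is an $R'$-basis of $\Z_2[\omega]$. I would write $x = a + \omega b$, $y = c + \omega d$ with $a, b, c, d \in R'$; squaring and comparing the coefficients of $1$ and $\omega$ against $2\omega$ yields the system in $R'$:
\[
a^2 + c^2 + \pi(b^2 + d^2) = 0, \qquad ab + cd = 1.
\]

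Reducing both equations modulo $\pi$ (in $R'/\pi R' \cong \mathbb{F}_2$) forces $\bar a = \bar c = 1$ and, without loss of generality, $\bar b = 0$, $\bar d = 1$. Write $a = 1 + \pi a'$, $c = 1 + \pi c'$, $b = \pi b'$, $d = 1 + \pi d'$ with $a', b', c', d' \in R'$. Because $2 = \pi^{e/2}$, the cross term $2\pi a'$ in $a^2 = 1 + 2\pi a' + \pi^2 a'^2$ has $\pi$-valuation $\geq e/2 + 1 \geq 2$; the analogous statement holds for $c^2$ and $d^2$. Hence $a^2, c^2, d^2 \equiv 1 \pmod{\pi^2}$ and $b^2 \equiv 0 \pmod{\pi^2}$, giving
\[
a^2 + c^2 + \pi(b^2 + d^2) \;\equiv\; 2 + \pi \pmod{\pi^2}.
\]
For $e \geq 4$, $2 = \pi^{e/2} \equiv 0 \pmod{\pi^2}$, so the right-hand side is $\pi$, which is non-zero mod $\pi^2$ --- a contradiction.

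The case $e = 2$ (where $R' = \Z_2$ and $\pi = 2$) is not settled by the mod-$\pi^2$ reduction, since then $\pi^{e/2} = \pi$ is not absorbed. Here I would finish with a direct mod-$8$ calculation: $a, c$ odd gives $a^2 + c^2 \equiv 2 \pmod 8$, and $\bar b + \bar d = 1$ (one of $b, d$ even, the other odd) gives $b^2 + d^2 \equiv 1 \pmod 4$, hence $2(b^2 + d^2) \equiv 2 \pmod 8$; summing yields $\equiv 4 \pmod 8$, not zero.

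The main bookkeeping hurdle is the cross-term valuation claim and the $e = 2$ fork --- verifying that the lone surviving $\pi^1$-contribution arises precisely from the constant $1$ of $d^2$, and that the refined mod-$8$ step really closes the gap when the uniform argument fails at $e = 2$.
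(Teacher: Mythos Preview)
Your proof is correct but takes a genuinely different route from the paper. The paper expands $x$ and $y$ in $\omega$-adic digits $x_i, y_i \in \{0,1\}$, uses the explicit square formula (Equation~(\ref{evenesquare})) for $x^2 + y^2$, and matches coefficients of $\omega^0$, $\omega^{e+1}$, and $\omega^2$ to obtain the three congruences
\[
x_0 + y_0 \equiv 0,\qquad x_0x_1 + y_0y_1 \equiv 1,\qquad x_1 + y_1 \equiv 0 \pmod 2,
\]
which are jointly inconsistent. Your argument instead exploits the tower structure: since $e$ is even, $R' = \Z_2[\omega^2]$ sits inside $\Z_2[\omega]$ with basis $\{1,\omega\}$, and the problem collapses to two equations in $R'$ that you kill by reducing modulo $\pi$ and $\pi^2$. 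Both proofs case-split at $e=2$ (the paper remarks that $e=2$ ``is simpler and easily checked''). Your approach is more structural and avoids the long square formula entirely; the paper's coefficient-matching is more in line with the rest of Section~\ref{2sec} and makes the parallel with Lemma~\ref{oddlb} transparent. Either is fine here, and your subring decomposition is a clean trick worth keeping in mind for the higher-$k$ cases.
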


\begin{proof}
Assume there exist $x,y \in \Z_2[\omega]$ such that $x^2 + y^2 = \omega^{2e+1}$.  Noting that $\omega^e = 2$, matching the coefficients of this equation and using Equation (\ref{evenesquare}) gives us $e$ linearly independent congruences (as in Theorems \ref{222k} and \ref{236}), the first three of which are
\begin{align*}
    x_0^2 + y_0^2 + (x_{\frac{e}{2}}^2 + y_{\frac{e}{2}}^2)\omega^e + \hspace{0.8in} & \tag{$\omega^0, \omega^e, \omega^{2e}$}\\
    (x_e^2 + x_0x_e + x_1x_{e-1} + \dots \hspace{0.3in} &  \\
     + x_{\frac{e}{2}-1}x_{\frac{e}{2}+1} + y_e^2 + \dots)  \omega^{2e} & \equiv 0 \bmod \omega^{2e+1}, \\
    (x_0x_1 + y_0y_1)  \omega^{e+1} & \equiv \omega^{e+1} \bmod \omega^{2e+1}, \tag{$\omega^1, \omega^{e+1}$}\\
   \hspace{0.1in}  (x_1^2 + y_1^2) \omega^2 + (x_0x_2 + x_{\frac{e+2}{2}}^2 + y_0y_2 + y_{\frac{e+2}{2}}^2) \omega^{e+2} &\equiv 0 \bmod \omega^{2e+1}. \tag{$\omega^2, \omega^{e+2}$}
\end{align*}
We assume here that $e > 2$; the case when $e=2$ is simpler and easily checked.

We can then again convert these to integral congruences to yield the following:
\begin{align*}
    x_0^2 + y_0^2 + 2(x_{\frac{e}{2}}^2 + y_{\frac{e}{2}}^2) + \hspace{2.5in}& \\
    4(x_e^2 + x_0x_e + x_1x_{e-1} + \dots + x_{\frac{e}{2}-1}x_{\frac{e}{2}+1} + y_e^2 + y_0y_e + \dots ) & \equiv 0 \bmod 8, \\
    x_0x_1 + y_0y_1 & \equiv 1 \bmod 2, \\
    x_1^2 + y_1^2 + 2(x_0x_2 + x_{\frac{2+e}{2}}^2 + y_0y_2 + y_{\frac{2+e}{2}}^2) &\equiv 0 \bmod 4.
\end{align*}
For our purposes, we only need to look at these equations mod 2:
\begin{align*}
    x_0^2 + y_0^2  & \equiv 0 \bmod 2,  \\
    x_0x_1 + y_0y_1 & \equiv 1 \bmod 2, \\
    x_1^2 + y_1^2  &\equiv 0 \bmod 2.
\end{align*}

The first equation implies that $x_0 = y_0$, and the third equation implies $x_1 = y_1$.  But then the second equation cannot be satisfied, and we get a contradiction.  Therefore $w^{e+1}$ cannot be written as the sum of 2 squares.
\end{proof}

\begin{lemma} \label{oddlb}
    When $e$ is odd, $1+\omega^{e}+\omega^{2e} = 7$ cannot be written as the sum of 3 squares.
\end{lemma}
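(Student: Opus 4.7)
My plan mirrors the proof of Lemma \ref{evenlb}: assume for contradiction that $x^2 + y^2 + z^2 = 1 + \omega^e + \omega^{2e}$ for some $x, y, z \in \Z_2[\omega]$, and reduce the equation modulo $\omega^{2e+1}$ to a system of integral congruences whose inconsistency yields the contradiction. Expanding each square via Equation (\ref{oddsquare}) and summing, I would match coefficients of $\omega^i$ on both sides, carefully tracking the carries produced by $2 = \omega^e$ (so that $2\omega^j = \omega^{e+j}$ propagates digits forward by $e$ positions).

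Matching the $\omega^0$-component modulo $4$ forces $X_0^2 + Y_0^2 + Z_0^2 \equiv 3 \pmod{4}$ for the integer components of $x,y,z$; since odd squares are $\equiv 1 \pmod{4}$ and even squares are $\equiv 0$, this forces $x_0 = y_0 = z_0 = 1$. Matching $\omega^{2j}$ for $1 \leq j \leq (e-1)/2$ modulo $2$ forces $\Lambda_j := x_j + y_j + z_j \in \{0,2\}$. The remaining $\omega^{e+m}$ constraints for $1 \leq m < e$ combine the native contribution from Equation (\ref{oddsquare}) with the carry from $\omega^m$; after using $x_0=y_0=z_0=1$, they read
\[\Lambda_{2l} + \sum_{i=1}^{l-1} \Lambda_{i,\,2l-i} + \lfloor \Lambda_l/2 \rfloor \equiv 0 \pmod{2}\]
when $m = 2l$ is even and
\[\Lambda_{(e-1)/2 + l} + \Lambda_{2l-1} + \sum_{i=1}^{l-1} \Lambda_{i,\,2l-1-i} \equiv 0 \pmod{2}\]
when $m = 2l - 1$ is odd, where $\Lambda_{i,j} := \sum_\alpha \alpha_i \alpha_j$.

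I would then prove $\Lambda_j = 0$ for every $j = 1, \ldots, (e-1)/2$ by strong induction on $j$. With $\Lambda_1 = \cdots = \Lambda_{j-1} = 0$ in hand, every cross-term $\Lambda_{i,\cdot}$ with $i < j$ collapses, so the even constraint at $l = j$ reduces to $\Lambda_{2j} + \Lambda_j/2 \equiv 0 \pmod{2}$; since $\Lambda_j \in \{0,2\}$, it suffices to show $\Lambda_{2j}$ is even. If $2j \leq (e-1)/2$, this is one of the initial congruences; otherwise I iterate the odd constraint with $l = k - (e-1)/2$ starting from $k_0 = 2j$ to obtain a chain $\Lambda_{k_0} \equiv \Lambda_{k_1} \equiv \cdots \pmod{2}$ with $k_{n+1} = 2 k_n - e$. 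Because $k_n < e$ forces $k_{n+1} < k_n$ while $k_n \geq (e+1)/2$ forces $k_{n+1} \geq 1$, the sequence strictly decreases yet stays $\geq 1$ until it first enters $[1, (e-1)/2]$, where the directly established evenness closes the chain. Substituting into the $\omega^{2e}$-constraint finally produces $\sum_\alpha \sum_{j=1}^{(e-1)/2} \alpha_j \alpha_{e-j} \equiv 1 \pmod{2}$, which is manifestly impossible once every $\alpha_j$ with $1 \leq j \leq (e-1)/2$ is zero.

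The trickiest part will be managing the odd-$m$ iteration: at each stage I need $k_n \leq (e-1)/2 + j$ so that the cross-term condition $l \leq j$ continues to hold under the induction hypothesis, and I need the sequence to actually land inside $[1,(e-1)/2]$ rather than leap past it. Both reduce to the single inequality $(2^{n+1} - 1)(e - 2j) \geq 1$, automatic from $j \leq (e-1)/2 < e/2$, but the bookkeeping must be carried out uniformly in $e$ and handled along with the (simpler) small-$e$ cases $e = 3,5$ where the chain is at most one step long.
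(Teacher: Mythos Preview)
Your proposal is correct and follows essentially the same route as the paper: both deduce $x_0=y_0=z_0=1$ from the integral part modulo $4$, then force $x_j=y_j=z_j=0$ for $1\le j\le(e-1)/2$ by strong induction using the $\omega^{2j}$ constraint together with the odd-index constraints (your chain $k_{n+1}=2k_n-e$ plays the same role as the paper's appeal to the $\omega^{4k-e}$ coefficient under its slightly stronger inductive hypothesis), and finish with the contradiction at the $\omega^{2e}$ coefficient. The only differences are organizational --- the paper maintains parity information for the window $k\le j\le 2k-2$ as part of its hypothesis while you recover it on the fly via iteration --- but the substance is identical.
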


\begin{proof}
Assume there exist $x,y,z \in \Z_2[\omega]$ such that $x^2 + y^2 + z^2 = 1 + \omega^e + \omega^{2e} =7$.    

As in the proof of Lemma \ref{evenub} above, we examine the coefficient congruences this assumption implies.  We first have (corresponding to the coefficients of powers of $\omega^e$):
\begin{align*}
    x_0^2 + y_0^2 +z_0^2 & \equiv 1 + \omega^e = 3 \bmod \omega^{e+1}. \tag{$\omega^0, \omega^e$}
\end{align*}
Since this congruence contains only integers and $\omega^{2e} = 4$, we have that $x_0^2 + y_0^2 +z_0^2 \equiv 3 \bmod 4$, so $x_0 = y_0 = z_0 = 1$.  We then have the following congruences, starting with the even powers of $\omega$ less than $e$:

\begin{align}
   (x_1^2 + y_1^2 + z_1^2)\omega^2 + (x_0x_2 + y_0y_2 + z_0z_2)\omega^{2+e} &\equiv 0 \bmod \omega^{2e+1} \tag{$\omega^2, \omega^{e+2}$} \\
    (x_2^2 + y_2^2 + z_2^2)\omega^4 + \hspace{2in}& \tag{$\omega^4, \omega^{e+4}$} \\
    (x_1x_3 + x_0x_4 + y_1y_3 + y_0y_4 + z_1z_3+z_0z_4)\omega^{4+e}& \equiv 0 \bmod \omega^{2e+1} \notag \\
    \vdots & \notag \\
    (x_{\frac{e-1}{2}}^2 + y_{\frac{e-1}{2}}^2 + z_{\frac{e-1}{2}}^2)\omega^{e-1} + \hspace{1.3in}&\tag{$\omega^{e-1}, \omega^{2e-1}$} \\
    \qquad \left(\sum_{0\leq i < \frac{e-1}{2}} (x_ix_{e-1-i} + y_iy_{e-1-i} + z_iz_{e-1-i})\right)\omega^{2e-1} & \equiv 0 \bmod \omega^{2e+1} \notag
\end{align}

Converting these into integral congruences, we have 
\begin{align}
   (x_1^2 + y_1^2 + z_1^2) + 2 (x_0x_2 + y_0y_2 + z_0z_2) &\equiv 0 \bmod 4 \notag \\
    (x_2^2 + y_2^2 + z_2^2) + 2 (x_1x_3 + x_0x_4 + y_1y_3 + y_0y_4 + z_1z_3+z_0z_4)& \equiv 0 \bmod 4 \notag \\
    \vdots & \notag \\
    (x_{\frac{e-1}{2}}^2 + y_{\frac{e-1}{2}}^2 + z_{\frac{e-1}{2}}^2) + 2 \left(\sum_{0\leq i < \frac{e-1}{2}} (x_ix_{e-1-i} + y_iy_{e-1-i} + z_iz_{e-1-i})\right) & \equiv 0 \bmod 4 \label{eveneq} 
\end{align}

This implies that $x_i^2 + y_i^2 + z_i^2$ is even for all $1 \leq i \leq \frac{e-1}{2}$.  Then, since $x_0 = y_0 = z_0 = 1$ and $x_2^2 + y_2^2 + z_2^2 = x_2 + y_2 + z_2$ is even, the first congruence above becomes
\[(x_1^2 + y_1^2 + z_1^2) \equiv 0 \bmod 4. \]
Therefore $x_1 = y_1 = z_1 = 0$. 

We will then inductively repeat this process: since $x_1 = y_1 = z_1 = 0$ and $x_4+y_4+z_4$ is even, the second congruence above implies that $x_2 = y_2 = z_2 = 0$.  For the general inductive case, assume that $x_i = y_i = z_i = 0$ for $1 \leq i \leq k-1 < \frac{e-1}{2}$ and $x_j + y_j + z_j$ is even for $k \leq j \leq 2k-2 < e-1$; we then need to show that $x_k = y_k = z_k = 0$ and that $x_{2k-1} + y_{2k-1} + z_{2k-1}$ and $x_{2k} + y_{2k} + z_{2k}$ are even.

We then need the equations for the odd powers $i$ of $\omega$ less than $e$, where there will be no corresponding $\omega^i$ term in the expansion of any square.  And, since we are only concerned with the coefficients in the expansion up to $\omega^{2e+1}$, only the coefficients of $\omega^{i+e}$ are relevant, and we only know their parity --- the coefficient sum must be even to match the coefficients of $1 + \omega^e + \omega^{2e} = 7$. Therefore for odd $i < e$, we have
\begin{equation} \label{4ke}
x_{\frac{i+e}{2}}^2 + y_{\frac{i+e}{2}}^2  + z_{\frac{i+e}{2}}^2 + \sum_{0\leq j \leq \frac{i-1}{2}} (x_jx_{i-j} + y_jy_{i-j} + z_jz_{i-j}) \equiv 0 \bmod 2.
\end{equation}

We first show that $x_{2k-1} + y_{2k-1} + z_{2k-1}$ is even.  If $2k-1 \leq \frac{e-1}{2}$, then we know $x_{2k-1} + y_{2k-1} + z_{2k-1}$ is even by Equation (\ref{eveneq}) above.  If $2k-1 > \frac{e-1}{2}$, let $m = 4k-e-2 > 0$. Note that $m$ is odd and less than $e$.  Noting that $\frac{m+e}{2} = 2k-1$, Equation (\ref{4ke}) for $\omega^{m}$ takes the form 
 \begin{equation}  \label{4kem}
x_{2k-1}^2 + y_{2k-1}^2  + z_{2k-1}^2 + \sum_{0\leq j \leq \frac{m-1}{2}} (x_jx_{m-j} + y_jy_{m-j} + z_jz_{m-j}) \equiv 0 \bmod 2.
\end{equation}
And, since $\frac{m-1}{2} = \frac{4k-e-3}{2} < \frac{2k-2}{2} = k - 1$, we have that $x_j = y_j = z_j = 0$ for all $0 \leq j \leq \frac{m-1}{2}$.  Equation (\ref{4kem}) then reduces to
\[x_{2k-1}^2 + y_{2k-1}^2  + z_{2k-1}^2 = x_{2k-1} + y_{2k-1}  + z_{2k-1} \equiv 0 \bmod 2.\]

We then examine the coefficient equation starting with $\omega^{2k}$:
\begin{align*}
    (x_k^2 + y_k^2 + z_k^2) + 2 \left(\sum_{0\leq i \leq k-1 } (x_ix_{2k-i} + y_iy_{2k-i} + z_iz_{2k-i})\right) & \equiv 0 \bmod 4,   \\
    (x_k + y_k + z_k) + 2(x_{2k} + y_{2k} + z_{2k}) &\equiv 0 \bmod 4. 
\end{align*} 
To show that $x_k = y_k = z_k = 0$, it then suffices to show that $x_{2k} + y_{2k} + z_{2k}$ is even.  If $4k < e$, then $2k \leq \frac{e-1}{2}$, and so $x_{2k} + y_{2k} + z_{2k}$ is even by our initial observations above  (under Equation (\ref{eveneq})).  If $4k \geq e$, we use the coefficient equation for $\omega^{4k-e}$.  Applying Equation (\ref{4ke}) with $i = 4k-e$ gives
\begin{equation*}
x_{2k}^2 + y_{2k}^2  + z_{2k}^2 + \sum_{0\leq j \leq \frac{4k-e-1}{2}} (x_jx_{4k-e-j} + y_jy_{4k-e-j} + z_jz_{4k-e-j}) \equiv 0 \bmod 2.
\end{equation*}

Recall that we have assumed $2k-2 < e-1$. As both these quantities are even, we also have $2k-1 < e-1$.  Therefore $4k-e < 2k$, so $\frac{4k-e-1}{2} < k$, which implies $x_j = y_j = z_j = 0$ for $1 \leq j \leq \frac{4k-e-1}{2}$.  We then have (noting $x_0 = y_0 = z_0 = 1$):
\[
x_{2k}^2 + y_{2k}^2  + z_{2k}^2 + (x_{4k-e} + y_{4k-e} + z_{4k-e}) \equiv 0 \bmod 2.
\]
Finally, since $4k-e<2k$, we know $x_{4k-e} + y_{4k-e} + z_{4k-e}$ is even, which implies $x_{2k}^2 + y_{2k}^2  + z_{2k}^2 = x_{2k} + y_{2k}  + z_{2k}$ is even.

So, we know that $x_i=y_i=z_i=0$ for $1 \leq i <\frac{e-1}{2}$.  This implies that the $(\omega^0,\omega^e,\omega^{2e})$ coefficient congruence then becomes (by Equation (\ref{oddsquare}))
\begin{align*}
    x_0^2 + y_0^2 +z_0^2 +4(x_e^2 + y_e^2 + z_e^2 + \hspace{1in} & \\
    \sum_{0 \leq i \leq \frac{e-1}{2}} x_ix_{e-j} + y_iy_{e-j} + z_iz_{e-j})& \equiv 1 + \omega^e + \omega^{2e}= 7 \bmod \omega^{2e+1} \\
    3 + 4(x_e^2 + x_e + y_e^2 + y_e + z_e^2 + z_e) & \equiv 7 \bmod \omega^{2e+1} \\
    3 & \equiv 7 \bmod \omega^{2e+1} 
\end{align*}
and we have our contradiction.
\end{proof}

We then need to prove the upper bounds for each case.  As with the lower bounds, the proofs are similar.

\begin{lemma} \label{evenub}
If $e$ is even, every element of $\Z_2[\sqrt[e]{2}]^2$ can be written as the sum of 3 squares.
\end{lemma}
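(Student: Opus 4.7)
The plan is to adapt the case-analysis strategy from Theorem \ref{222k}: reduce to checking residue classes of $\alpha \in R^2_{2,e}$ modulo $\omega^{2e+1}$ via Corollary \ref{hencal} and a division trick, then exhibit an explicit three-square decomposition for each class. Throughout let $\omega = \sqrt[e]{2}$. For the reduction, if $\nu_\omega(\alpha)$ is sufficiently large, factor out an even power $\omega^{2m} = (\omega^m)^2$ so that $\alpha / \omega^{2m} \in R^2_{2,e}$ has small valuation, as in the division argument at the end of the $g_{2,1}(4) = 15$ proof; a three-square decomposition of the quotient gives one for $\alpha$ by multiplying each summand by $\omega^m$. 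Modulo $\omega^{2e+1}$, it then suffices to produce three squares $A^2 + B^2 + C^2$ whose sum matches $\alpha$: by Corollary \ref{hencal} applied to $f(x) = x^2 - (\alpha - B^2 - C^2)$, if $A$ is a unit with $A^2 \equiv \alpha - B^2 - C^2 \pmod{\omega^{2e+1}}$, then $A$ lifts to an exact square root, giving $\alpha$ as a genuine sum of three squares.

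By Lemma \ref{r2}, the class of $\alpha$ modulo $\omega^{2e+1}$ is parametrized by the coefficients $\alpha_0, \alpha_2, \ldots, \alpha_e$ (at even powers up through $\omega^e$) and $\alpha_{e+1}, \ldots, \alpha_{2e}$ (the \emph{tail}). The basic building-block squares are $1 = 1^2$, $\omega^{2j} = (\omega^j)^2$, and the mixing identity $(1+\omega^j)^2 \equiv 1 + \omega^{2j} + \omega^{e+j} \pmod{\omega^{2e+1}}$, with more general expansions governed by Equation (\ref{evenesquare}). My first attempt is to match the low part of $\alpha$ with a single square by choosing $A = \alpha_0 + \alpha_2\omega + \alpha_4\omega^2 + \cdots + \alpha_e\omega^{e/2}$, so that by Equation (\ref{evenesquare}) the coefficients of $A^2$ at $\omega^0, \omega^2, \ldots, \omega^e$ agree with those of $\alpha$. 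Setting $\gamma = \alpha - A^2$, the task reduces to realizing the tail $\gamma$ as a sum of two squares modulo $\omega^{2e+1}$: the odd-indexed tail coefficients $\gamma_{e+1}, \gamma_{e+3}, \ldots$ are controlled by cross-terms $x_0 x_j$ in Equation (\ref{evenesquare}), while the even-indexed tail coefficients $\gamma_{e+2}, \gamma_{e+4}, \ldots$ additionally receive a ``free'' diagonal contribution $x_{(e+2j)/2}^2$ that is independent of the low coefficients already used.

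The main obstacle is that this ``one square matches the low part, two more fill in the tail'' scheme cannot always succeed: by Lemma \ref{evenlb}, the element $\omega^{e+1} = 2\omega$ is not itself a sum of two squares, so if the choice of $A$ above leaves $\gamma \equiv \omega^{e+1} \pmod{\omega^{e+2}}$, no two-square completion exists. To handle these configurations I would perturb $A$: since the $\omega^{e+1}$ coefficient of $A^2$ equals $A_0 A_1 = \alpha_0\alpha_2$ by Equation (\ref{evenesquare}), toggling $A_1$ (or adding higher-order terms to $A$) shifts $\gamma_{e+1}$ at the cost of altering some low-order coefficient of $A^2$, which a compensating choice of $B^2$ or $C^2$ must then cancel. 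The bookkeeping splits naturally into cases according to the parity of $\alpha_0$ and of $\alpha_{e+1} + \alpha_{e+3} + \cdots + \alpha_{2e-1}$, and for each case one expands $A^2+B^2+C^2$ via Equation (\ref{evenesquare}) and verifies agreement with $\alpha$ coefficient-by-coefficient. Completing this combinatorial verification uniformly in $e$ is the chief difficulty; once done, Corollary \ref{hencal} produces the required lift, and together with the division reduction this yields the upper bound $g_{2,e}(2) \leq 3$ for all even $e$.
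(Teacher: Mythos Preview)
Your proposal is a plan rather than a proof: you explicitly identify ``completing this combinatorial verification uniformly in $e$'' as the chief difficulty and then do not attempt it. That verification is essentially the entire content of the lemma, so what you have written is a statement of intent, not an argument.

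More importantly, you are missing the organizing insight that makes the paper's proof go through cleanly: every \emph{unit} $\alpha \in R^2_{2,e}$ is already a sum of \emph{two} squares. The paper sets $x_0 = 1$, forces $y_0 = y_2 = \cdots = y_{e-2} = 0$, and then, for $n = 1, 2, \ldots, e$, determines $x_n$ and (when appropriate) $y_n$ one at a time by reading off the coefficients $\alpha_{n+e}$ and $\alpha_{2n}$ and using Equation~(\ref{evenesquare}). Because each new coefficient of $\alpha$ is hit by a term linear in a single not-yet-chosen $x_n$ or $y_n$, no case analysis on $\alpha$ is needed at all---only a split on the parity of $n$ and whether $2n \le e$. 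Once units are handled by two squares, the third square is used only for the trivial reduction: if $\alpha_0 = 0$, then $\alpha - 1$ is a unit in $R^2_{2,e}$ and $\alpha = 1^2 + (\text{two squares})$. Your three-square scheme, by contrast, obscures this structure and forces you into the perturb-and-patch case analysis you describe.

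Finally, your division reduction is not quite safe: dividing $\alpha$ by $\omega^{2m}$ can push a nonzero coefficient from index $\ge e$ down to an odd index $< e$, taking you outside $R^2_{2,e}$ (e.g.\ $e=4$, $\alpha = \omega^5$, $m=1$ gives $\omega^3 \notin R^2_{2,4}$). The paper's ``subtract $1$'' reduction avoids this entirely.
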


\begin{proof}
Again, let $\omega = \sqrt[e]{2}$.  We first note that to prove this lemma, it suffices to show that if $\alpha \in \Z_2[\sqrt[e]{2}]^2$ with $\alpha_0 = 1$, then there exist $x,y \in \Z_2[\sqrt[e]{2}]$ such that $x^2 + y^2 = \alpha$.  Additionally, by Corollary \ref{hencal}, we only need to show that there exist $x,y \in \Z_2[\sqrt[e]{2}]$ such that $x^2 + y^2 \equiv \alpha \bmod \omega^{2e+1}$.

So, take $\alpha\in \Z_2[\sqrt[e]{2}]^2$ with $\alpha_0 = 1$.  We claim that if we take $x,y$ with $x_0 = 1$ and $y_0 = y_2 = y_4 = \cdots = y_{e-2} = 0$, the rest of the coefficients of $x$ and $y$ (up to some point) can be chosen to ensure that $x^2 + y^2 \equiv \alpha \bmod \omega^{2e+1}$.  We will prove this via induction on the $\omega^n$ coefficients of $x$ and $y$.  For our base case, we choose $x_0 = 1$ and $y_0 = 0$, which ensures $x^2 + y^2 \equiv \alpha \bmod \omega$.

We then assume that for some $1 \leq n \leq e$ we have chosen the coefficients $x_0, \dots, x_{n-1}$ and $y_0, \dots, y_{n-1}$ so that all the determined coefficients of $x^2 + y^2$ match the coefficients of $\alpha$; we will show that we can choose $x_n$ and $y_n$ such that the determined coefficients of $x^2 + y^2$ match that of $\alpha$.  This breaks down into 4 cases, depending on whether $2n \leq e$ and the parity of $n$.  We make heavy use of the formula for squares given in Equation (\ref{evenesquare}).

\underline{Case 1}: Assume that $n$ is odd.   Here we use the coefficients $\alpha_{n+e}$ (noting that $\alpha_n = 0$ by Lemma \ref{r2}) and $\alpha_{2n}$.
The coefficient of $\omega^{n+e}$ in $x^2 + y^2$ is 
\[\sum_{0\leq i \leq \frac{n-1}{2}} (x_ix_{n-i} + y_iy_{n-1}).\]
Since $x_0 = 1$ and $y_j  = 0$ for even $j$, this becomes
\[x_n + x_1x_{n-1} + \dots + x_{\frac{n-1}{2}}x_{\frac{n+1}{2}} + y_1y_{n-1} + \dots + y_{\frac{n-1}{2}}y_{\frac{n+1}{2}}.\]
All of the indices are less than $n$ except for the first one, and are therefore fixed.  We can then choose $x_n$ to ensure the coefficient matches $\alpha_{n+e}$.

Then, if $2n \leq e$, the coefficient for $\omega^{2n}$ in $x^2 + y^2$ is $x^2_{n} + y^2_{n} = x_{n} + y_{n}$.  As $x_{n}$ is fixed above, we can choose $y_n$ to ensure the coefficient matches $\alpha_{2n}$.

If $2n > e$, let $m  = 2n-e$. The coefficient for $\omega^{2n}$ in $x^2 + y^2$ is then 
\begin{equation} \label{msquare}
    x^2_{n} + y^2_{n} + \sum_{0 \leq i \leq \frac{m}{2}-1} (x_ix_{m-i} + y_iy_{m-i}).
\end{equation}
Since $n$ is odd, we have $n < e$, so $m < n$. Therefore, all of the coefficients except $y_n$ in the sum are fixed (as we have fixed $x_n$ above).  We can choose $y_n$ to ensure the coefficient matches $\alpha_{2n}$.

\underline{Case 2a}: Assume that $2n \leq e$ and that $n$ is even.  Then, since $n$ is even we let $y_n = 0$, so the coefficient of $\omega^{2n}$ in $x^2 + y^2$ is just $x^2_n = x_n$, so we can choose $x_n = \alpha_{2n}$.

\underline{Case 2b}: Assume that $2n > e$, $n < e$, and $n$ is even.  Again let $m = 2n-e$.  Then the coefficient of $\omega^{2n}$ in $x^2 + y^2$ is given above in (\ref{msquare}). 
Since $n < e$, we have $m < n$, so all of the coefficients in the sum are fixed.  Since $n$ is even, we let $y_n = 0$, and then we can choose $x_n$ to ensure the coefficient matches $\alpha_{2n}$.

\underline{Case 3}: Assume $n=e$.  The coefficient of $\omega^{2e}$ in $x^2 + y^2$ is 
\[x^2_{e} + y^2_{e} + \sum_{0 \leq i \leq \frac{e}{2}-1} (x_ix_{e-i} + y_iy_{e-i})\]
Note that $x_e$ appears twice here; we have $x^2_e + x_0x_e$ as part of the coefficient.  But $x_0 = 1$, so $x^2_e + x_0x_e = 2x_e \equiv 0 \bmod \omega^{2e}$, so $x_n$ does not affect this coefficient.  All other terms in the coefficient are fixed except for $y^2_e = y_e$, which only appears once (since $y_0= 0$), so we can choose $y_e$ so that the coefficient of $x^2 + y^2$ matches $\alpha_{2e}$.

We have now chosen $x_i$ and $y_i$ for $0 \leq i < e$ and $y_e$.  Additionally, each coefficient of $\alpha$ has been used exactly once in fixing the $x_i$ and $y_i$: the coefficients with indices of the form $n+e$ are used to determine $x_n$; the coefficients with indices of the form $2n < 2e$ are used to determine $y_n$ when $n$ is odd and $x_n$ when $n$ is even; and the $2e$ coefficient is used to determine $y_e$.  We therefore have $x^2 + y^2 \equiv \alpha \bmod \omega^{2e+1}$, so $\alpha$ can be written as the sum of 2 squares in $\Z_2[\sqrt[e]{2}]$, completing the Lemma.
\end{proof}

\begin{lemma} \label{oddub}
If $e$ is odd, every element of $\Z_2[\sqrt[e]{2}]^2$ can be written as the sum of 4 squares.
\end{lemma}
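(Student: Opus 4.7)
The plan is to adapt the inductive coefficient-matching technique of Lemma \ref{evenub} to the odd-$e$ setting, using four squares in place of two. Let $\omega = \sqrt[e]{2}$. By Corollary \ref{hencal}, it suffices to produce $x,y,z,w \in \Z_2[\omega]$ with
\[x^2 + y^2 + z^2 + w^2 \equiv \alpha \pmod{\omega^{2e+1}}\]
for each $\alpha \in \Z_2[\omega]^2$.

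As in the even-$e$ case, I would first isolate the case $\alpha_0 = 1$; the case $\alpha_0 = 0$ is handled either by the rescaling trick from the proof that $g_{2,1}(4) = 15$ (factoring out an even power of $\omega$, which is legal because Lemma \ref{r2} forces $\nu_\omega(\alpha)$ to be even when $0 < \nu_\omega(\alpha) < e$) or by rerunning the same induction with the alternative initial data $x_0 = y_0 = z_0 = w_0 = 0$.

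With $\alpha_0 = 1$, I would set $x_0 = 1$ and $y_0 = z_0 = w_0 = 0$, matching the $\omega^0$ coefficient of the sum to $\alpha_0$. The odd-indexed coefficients of the sum below $\omega^e$ vanish automatically by Equation (\ref{oddsquare}), matching the corresponding vanishing coefficients of $\alpha$ without any further work. I would then inductively determine $x_n, y_n, z_n, w_n$ for $n = 1, 2, \dots, e$, at step $n$ using the $\omega^{2n}$ coefficient of the sum (whose leading contribution, when $2n \leq e-1$, is $x_n + y_n + z_n + w_n$) together with the $\omega^{n+e}$ coefficient (involving cross-terms read off Equation (\ref{oddsquare})) to match the corresponding coefficients of $\alpha$. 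The argument splits into cases analogous to those in Lemma \ref{evenub}, based on the parity of $n$ and whether $2n$ is less than, equal to, or greater than $e$.

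The extra two squares give ample flexibility: at each inductive step I have four binary unknowns to hit at most two binary targets, so solvability at each step follows directly once the cases are arranged correctly. The main obstacle will be careful bookkeeping around the $\omega^e$ and $\omega^{2e}$ positions, where the initial data $(x_0, y_0, z_0, w_0) = (1,0,0,0)$ and higher-order cross-terms interact nontrivially via the relation $\omega^e = 2$. These are precisely the positions that obstruct a two- or three-square solution, as witnessed by Lemma \ref{oddlb} via $1 + \omega^e + \omega^{2e} = 7$, so verifying that the carries into these slots align correctly is the delicate point of the argument.
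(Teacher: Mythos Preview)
Your outline diverges from the paper's proof in a way that leaves a real gap. The paper does \emph{not} attempt to hit an arbitrary unit with four squares directly. Instead it first proves the sharper statement that every $\alpha$ with $\alpha_0 = 1$ \emph{and} $\alpha_e = 0$ (equivalently $\alpha_0 + \alpha_e\omega^e = 1$) is already a sum of \emph{two} squares, via an induction parallel to Lemma \ref{evenub}; the residues $\alpha_0 + \alpha_e\omega^e \in \{2,3\}$ are then reached by adding one or two copies of $1^2$, and the case $\alpha_0 = \alpha_e = 0$ is handled by a more careful rescaling that tracks both $\nu_{\omega}(\alpha)$ and the least odd index $\ell$ with $\alpha_\ell \neq 0$.

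Your choice of initial data $(x_0,y_0,z_0,w_0)=(1,0,0,0)$ pins the $\omega^e$-digit of $x^2+y^2+z^2+w^2$ to $0$. For odd $e$ every single square has vanishing $\omega^e$-digit (each cross term $2x_ix_{e-i}$ carries the factor $2=\omega^e$ into the $\omega^{2e}$ slot; this is why Equation (\ref{oddsquare}) has no $\omega^e$ term), so the only possible source for a nonzero $\omega^e$-digit in the sum is a carry from the $\omega^0$ slot. With $1+0+0+0=1$ there is no such carry. Consequently your induction can never reach any $\alpha$ with $\alpha_e=1$, for instance $\alpha = 3 = 1+\omega^e$, and the ``four unknowns for two targets'' heuristic fails precisely at the $\omega^e$ position you flagged as delicate. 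The same obstruction hits your treatment of $\alpha_0 = 0$: when $\nu_{\omega}(\alpha)=e$ (which is odd), neither dividing out an even power of $\omega$ nor rerunning with $(0,0,0,0)$ (which amounts to dividing by $\omega^2$) lands you in a unit case you have already handled. The remedy is exactly the paper's: restrict the inductive two-square argument to $\alpha_0+\alpha_e\omega^e = 1$, and cover the remaining residues mod $4$ by subtracting $1$'s.
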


\begin{proof}
The proof follows a similar line to that of Lemma \ref{evenub}, and we frequently use Equation (\ref{oddsquare}).   Here, we first prove that if $\alpha \in \Z_2[\sqrt[e]{2}]^2$ is such that $\alpha_0 + \alpha_e \omega^e = 1$  (that is, $\alpha_0 = 1$ and $\alpha_e = 0$), then there exist $x,y \in \Z_2[\sqrt[e]{2}]$ such that $x^2 + y^2 = \alpha$.  

Take such an $\alpha$.  We will prove that if we let $x_0 = 1$ and $y_0 = y_i = 0$ for $i \in \{1, 3, \dots, e-2\}$, then we can choose $x_n$ and $y_n$ for $1 \leq n < e$ and $y_e$ such that $x^2 + y^2 \equiv \alpha \bmod \omega^{2e+1}$.  By Corollary \ref{hencal}, this is sufficient to prove the first claim. 

For our base case, we choose $x_0 = 1$ and $y_0 = 0$, which ensures $x^2 + y^2 \equiv \alpha \bmod \omega$. Then assume that for some $n$ with $1 \leq n < e$ we have chosen $x_0 , \dots, x_{n-1}$ and $y_0, \dots, y_{n-1}$.  

\underline{Case 1}: If $n<e$ is odd, note that we choose $y_n = 0$.  Then if $2n < e$, the $\omega^{2n}$ coefficient of $x^2 + y^2$ is just $x_n^2 + y_n^2 = x_n$, so we can choose $x_n = \alpha_{2n}$.  If $2n > e$, let $m = 2n-e$. Then the $\omega^{2n}$ coefficient of $x^2 + y^2$ is
\begin{equation} \label{2kodde}
    x_n^2 + y_n^2 + \sum_{0 \leq i \leq \frac{m-1}{2}} (x_ix_{m-i} + y_iy_{m-i}).
\end{equation} 

As $m$ is odd, each $y_iy_{m-i}$ term contains an odd index, so all of the $y$ coefficients drop. As $m < n$, all of the $x$ coefficients are known except $x_n^2$.  We can then choose $x_n$ to ensure the coefficient of $x^2 + y^2$ matches $\alpha_{2n}$.

\underline{Case 2}: If $n$ is even, we start by examining the $\omega^{n+e}$ coefficient of $x^2 + y^2$.  Recalling that $x_0 = 1$ and $y_0 = 0$, this coefficient is
\begin{align*}
    x_0&x_n + x_1x_{n-1} + \dots + x_{\frac{n}{2} - 1}x_{\frac{n}{2} + 1} + y_0y_n + y_1y_{n-1} + \dots + y_{\frac{n}{2} - 1}y_{\frac{n}{2} + 1} \\
    & = x_n + \left(x_1x_{n-1} + \dots + x_{\frac{n}{2} - 1}x_{\frac{n}{2} + 1} + y_1y_{n-1} + \dots + y_{\frac{n}{2} - 1}y_{\frac{n}{2} + 1}\right).
\end{align*}
We can therefore choose $x_n$ to ensure the coefficient of $x^2 + y^2$ matches $\alpha_{n+e}$.  

We then examine the $\omega^{2n}$ coefficient of $x^2 + y^2$.  If $2n < e$, this coefficient is $x_n^2 + y_n^2$. As $x_n$ is fixed above, we can choose $y_n$ to ensure the coefficient of $x^2 + y^2$ matches $\alpha_{2n}$.

If $2n > e$, the expression for the $\omega^{2n}$ coefficient is given in Formula (\ref{2kodde}) above.
Since $m < n$ and we have fixed $x_n$ above, the only unknown is $y_n^2 = y_n$. We can therefore choose $y_n$ to ensure the coefficient of $x^2 + y^2$ matches $\alpha_{2n}$.

We have now chosen $x_i$ and $y_i$ for $0 \leq i < e$. We then choose $y_e$ using the $\omega^{2e}$ coefficient as in the proof of Lemma \ref{evenub}.  Additionally (as in Lemma \ref{evenub}), each coefficient of $\alpha$ (except $\alpha_e = 0$) has been used exactly once in fixing the $x_i$ and $y_i$.  We therefore have $x^2 + y^2 \equiv \alpha \bmod \omega^{2e+1}$, so $\alpha$ can be written as the sum of 2 squares in $\Z_2[\sqrt[e]{2}]$, completing the first claim.

So, then suppose $\alpha \in \Z_2[\sqrt[e]{2}]^2$ is such that $\alpha_0 + \alpha_e \omega^e \neq 1$.  If $\alpha_0 + \alpha_e \omega^e = 2$, note that $\beta = \alpha - 1\in \Z_2[\sqrt[e]{2}]^2$ and $\beta_0 + \beta_e \omega^e = 1$.  So $\beta$ can be written as the sum of 2 squares, and therefore $\alpha = \beta + 1$ can be written as the sum of 3 squares.  Similarly, if $\alpha_0 + \alpha_e \omega^e = 3$, we can use $\alpha - 2$ to write $\alpha$ as the sum of 4 squares.

Suppose $\alpha_0 + \alpha_e \omega^e = 0$.  We then need to factor out a power of $\omega$ to write $\alpha$ as the sum of at most 4 squares.  Let $\nu_{\omega}(\alpha) = j$.  First suppose that $j$ is odd. Note that $j > e$ since $\alpha_e = 0$ and for $\alpha$ to be in $\Z_2[\sqrt[e]{2}]^2$, all coefficients of $\alpha$ with odd indices less than $e$ must be 0.  Let $m = \frac{j-e}{2}$, and let $\beta = \frac{\alpha}{(\omega^{m})^2}$.  Then $\nu_{\omega}(\beta) = e$, which implies $\beta \in \Z_2[\sqrt[e]{2}]^2$ and $\beta_0 + \beta_e \omega^e = 2$, so there exist $x,y,z \in \Z_2[\sqrt[e]{2}]$ such that $x^2 + y^2 + z^2 = \beta$ by the previous paragraph.  Therefore $\alpha = (x\omega^m)^2 + (y\omega^m)^2 + (z\omega^m)^2$.

Then suppose that $\nu_{\omega}(\alpha) = j$ is even.  Let $\ell$ be the least odd natural number such that $\alpha_{\ell} \neq 0$ (if it exists).  Let $m = \min\{j, \ell-e\}/2$. Note that $m > 0$ since $\alpha_0 + \alpha_e \omega^e = 0$.  As above, let $\beta = \frac{\alpha}{(\omega^{m})^2}$, and note that
\[\beta_0 + \beta_e \omega^e =
\begin{cases}
    1 & \text{ if } j < \ell - e, \\
    2 & \text{ if } j > \ell - e, \\
    3 & \text{ if } j = \ell - e.     
\end{cases}
\]
In each case we can write $\beta$, and therefore $\alpha$ as the sum of at most 4 squares, completing the proof of the lemma.
\end{proof}

Lemmas \ref{evenlb}, \ref{oddlb}, \ref{evenub}, and \ref{oddub} then complete the proof of Theorem \ref{2e2}.

We provide two more notable Waring numbers.  The proofs and calculations are similar to those given above.

\begin{theorem} \label{246}
We have $g_{2,4}(6) = 4$.
\end{theorem}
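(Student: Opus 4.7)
The plan is to mirror the structure of Theorems \ref{222k} and \ref{236}. Let $\omega = \sqrt[4]{2}$, so $\omega^4 = 2$ and $\nu_\omega(6) = 4$; by Corollary \ref{hencal} the sixth-power criterion, and hence the analysis of sums of sixth powers, can be carried out modulo $\omega^{2\nu_\omega(6) + 1} = \omega^9$. The first step is to derive the expansion of $x^6$ in $\Z_2[\omega]$ modulo $\omega^9$, using $2 \equiv \omega^4$ and $4 \equiv \omega^8 \pmod{\omega^9}$. Every multinomial coefficient in the expansion divisible by $8$ kills its term modulo $\omega^9$, leaving a tractable polynomial in the low coefficients $x_0, \ldots, x_8$ of $x$, analogous to Equation (\ref{3kpower}) but with more surviving terms.

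From this expansion I would enumerate the unit sixth powers of $\Z_2[\omega]$ modulo $\omega^9$. A crucial new feature at $e = 4$ is that $\omega^6 = 2\omega^2$ is itself a sixth power (of $\omega$), and $\omega^6 u^6$ is a sixth power for every unit $u$; modulo $\omega^9$ these valuation-$6$ sixth powers collapse to only $\omega^6$ and $\omega^6 + \omega^8$. Together with the unit sixth powers and $0$ (the image of any element of valuation $\geq 9$), this gives the complete list of sixth powers modulo $\omega^9$. Using SageMath, as in the proof of Theorem \ref{236}, I would determine the set $(\Z_2[\omega])^6$ modulo $\omega^9$ and build a table exhibiting each of its residues as a sum of at most $4$ sixth powers. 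Elements $\alpha \in (\Z_2[\omega])^6$ with $\nu_\omega(\alpha) \geq 9$ are then handled by the rescaling trick from the proof of $g_{2,1}(4) = 15$: dividing $\alpha$ by $\omega^{6r}$ for $r = \lfloor \nu_\omega(\alpha)/6 \rfloor$ reduces the problem to a residue of valuation less than $9$.

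For the lower bound, I would identify from the tabulation a specific residue $\beta \in (\Z_2[\omega])^6$ modulo $\omega^9$ that cannot be written as a sum of three sixth powers, and then confirm the impossibility analytically in the style of Theorems \ref{222k} and \ref{236}. Assuming $\beta \equiv x^6 + y^6 + z^6 \pmod{\omega^9}$, I would substitute the sixth-power expansion and match coefficients of $\omega^0, \omega^1, \ldots, \omega^8$. Splitting these coefficient equations into linearly independent integral congruences --- separating even from odd powers of $\omega$, or equivalently passing to the $\Z_2$-basis $\{1, \omega, \omega^2, \omega^3\}$ --- produces a system in the $\Lambda$-style sums of the low-index coefficients of $x, y, z$, and the bounds $0 \leq \Lambda_I \leq \Lambda_0 \leq 3$ should force a contradiction.

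The main obstacle is the lower-bound congruence system. Because $e = 4$ pushes the Hensel precision out to $\omega^9$, the sixth-power expansion carries strictly more surviving cross-terms than in the $e \leq 3$ cases, so the three-sixth-powers system has more unknowns; selecting $\beta$ so that this expanded system is actually infeasible, and keeping the case analysis manageable, is where the real work lies. The upper-bound tabulation and the rescaling argument are direct extensions of techniques that already appear in the paper.
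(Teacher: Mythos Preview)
Your proposal is correct and follows essentially the same approach as the paper: enumerate sixth powers mod $\omega^9$ (the paper tabulates only the unit ones explicitly, but your inclusion of the valuation-$6$ classes $\omega^6$ and $\omega^6+\omega^8$ is correct and implicitly used), use SageMath for the upper bound, and exhibit a residue---the paper finds $\omega^7$---that cannot be reached with three summands. The paper handles the lower bound by pure enumeration of all three-term sums rather than a $\Lambda$-style congruence argument, and the zero residue class is already covered once the tabulation includes a unit summand (via Hensel), so your rescaling step is not needed here.
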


\begin{proof} We proceed similar to the proof of Theorem \ref{236}.

Let $\omega = \sqrt[4]{2}$. We first determine which elements in $\Z_2[\sqrt[4]{2}]$ are sixth powers. Note that by Corollary \ref{hencal}, any unit of $\Z_2[\omega]$ congruent to a sixth power mod $\omega^9$ is also a sixth power, since $2\nu_{\omega}(6) + 1 = 9$. We determine all sixth powers of units in $\Z_2[\sqrt[4]{2}]$ in the following table.

\begin{table}[h]
    \centering
    \caption{Sixth powers of units $\bmod$ $\omega^9$ in $\Z_2[\omega]$}
$\begin{array}{|l|} 
\hline
   1 \\ \hline
1 + \omega^6 + \omega^7 \\ \hline
1 + \omega^4 + \omega^6 \\ \hline
1 + \omega^4 + \omega^7 \\ \hline
1 + \omega^2 + \omega^4 + \omega^5 + \omega^8 \\ \hline
1 + \omega^2 + \omega^4 + \omega^5 + \omega^6 + \omega^7 \\ \hline
1 + \omega^2 + \omega^5 + \omega^6 + \omega^7 + \omega^8 \\ \hline
1 + \omega^2 + \omega^5 \\ \hline
\end{array}$ 
    \label{g246table}
\end{table}

This implies that
\[(\Z_{2}[\omega])^6 = \{ x_0 + x_2 (\omega^2 + \omega^5) + x_4 \omega^4 + x_6 \omega^6 + x_7 \omega^7 + \dots \mid x_i \in \{0,1\} \}. \]

A calculation via SageMath shows that any element $\alpha \in (\Z_{2}[\omega])^6$ can be written in at most 4 sixth powers. Adding all possible combinations of 3 sixth powers shows that 1 residue class, $\omega^7$, requires at least 4 sixth powers.  We therefore have that $g_{2,4}(6) = 4$.

\end{proof}

\begin{theorem} \label{224}
We have $g_{2,2}(4) = 7$.
\end{theorem}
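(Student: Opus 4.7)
The plan is to follow the two-part template of Theorems \ref{222k}, \ref{236}, and \ref{246}. Let $\omega = \sqrt{2}$, so $\omega^2 = 2$ and $\nu_\omega(4) = 4$. By Corollary \ref{hencal}, a unit of $\Z_2[\omega]$ is a fourth power precisely when it is congruent to a fourth power modulo $\omega^{2\nu_\omega(4)+1} = \omega^9$. The first step is to expand $x^4$ for $x = \sum_{i \geq 0} x_i \omega^i \in \Z_2[\omega]$ modulo $\omega^9$, analogously to Equations \eqref{2kpower} and \eqref{3kpower}. Since $\binom{4}{1} = \binom{4}{3} = 4 = \omega^4$ and $\binom{4}{2} = 6 = 3\omega^2$, every cross term in the multinomial expansion picks up additional powers of $\omega$, and after collecting terms and reducing $x_i^2 = x_i$, each coefficient of $x^4$ mod $\omega^9$ becomes an explicit $\{0,1\}$-valued polynomial in $x_0, x_1, \ldots, x_8$. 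This calculation pins down the set of unit fourth powers mod $\omega^9$ and, via closure under addition, the set $(\Z_2[\omega])^4 \bmod \omega^9$.

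For the upper bound, I would use SageMath to enumerate sums of up to seven unit fourth powers mod $\omega^9$ and produce an explicit table, in the spirit of Tables \ref{2kpowers} and \ref{g246table}, realizing each residue class of $(\Z_2[\omega])^4$ as such a sum. The zero residue class is handled by the scaling trick from the proof of $g_{2,1}(4) = 15$: if $\nu_\omega(\alpha) \geq 4$, set $r = \lfloor \nu_\omega(\alpha)/4 \rfloor$ and $\alpha' = \alpha/\omega^{4r}$; decompose $\alpha' = y_1^4 + \cdots + y_m^4$ with $m \leq 7$; and recover $\alpha = (\omega^r y_1)^4 + \cdots + (\omega^r y_m)^4$. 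Unlike the $\Z_2$ situation, here $\omega^4 = 4$ is itself a fourth power, which enriches the available unit and non-unit summands and makes it plausible that $g_{2,2}(4)$ is much smaller than $g_{2,1}(4) = 15$.

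For the lower bound, I would locate a residue class $\alpha$ flagged by the upper-bound table as requiring seven summands, assume for contradiction that $y_1^4 + \cdots + y_6^4 \equiv \alpha \pmod{\omega^9}$, and split this congruence into its integral and $\sqrt{2}$-integral components (which are $\Z$-linearly independent, as used in Theorem \ref{222k}). Comparing the $\omega^i$ coefficients for $0 \leq i \leq 8$ produces a system of integer congruences on the partial symmetric sums $\Lambda_0 = \sum_j y_{j,0}$, $\Lambda_{01} = \sum_j y_{j,0} y_{j,1}$, and higher analogues; combined with the hard bounds $0 \leq \Lambda_\ast \leq 6$, these become jointly unsatisfiable. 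The main obstacle is bookkeeping: the quartic expansion mod $\omega^9$ involves cubic and quartic symmetric products of the $y_{j,i}$ in addition to the pairwise products appearing in Theorems \ref{222k} and \ref{236}, so identifying the correct critical residue class $\alpha$ and extracting a minimal subsystem of congruences that rules out six summands is the delicate part; the rest reduces to routine SageMath verification and imitation of the earlier arguments.
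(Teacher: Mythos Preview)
Your approach is essentially the paper's: expand $x^4$ modulo $\omega^9$, identify the unit fourth powers (they turn out to be exactly the units congruent to $1 + \alpha_5\omega^5 + \alpha_7\omega^7 + \alpha_8\omega^8$), enumerate sums including the non-unit fourth power $\omega^4 = 4$ to hit every class of $(\Z_2[\omega])^4$, and then show a specific class---the paper uses $\omega^5 = 4\sqrt 2$---cannot be reached with six summands via the $\Lambda$-congruence method. The lower bound in the paper needs only $\Lambda_0$, $\Lambda_1$, and $\Lambda_{01}$ and congruences mod $\omega^7$, so the bookkeeping is lighter than you anticipate.

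One small flaw: your scaling trick for the zero residue class does not transfer cleanly from $\Z_2$. In $\Z_2$ every element is a sum of fourth powers, so $\alpha' = \alpha/2^{4r}$ is automatically decomposable. Here, if $\nu_\omega(\alpha) = 9$ (say $\alpha = \omega^9$), then $r = \lfloor 9/4 \rfloor = 2$ and $\alpha' = \alpha/\omega^{8}$ has valuation $1$; but $(\Z_2[\omega])^4$ contains no elements of valuation $1$ or $3$, so $\alpha'$ is not a sum of fourth powers at all and the recursion breaks. The paper sidesteps this by requiring that the representative sum for \emph{every} residue class mod $\omega^9$, including the zero class, contain at least one unit fourth power; Corollary~\ref{hencal} then lifts that unit to absorb the entire tail of $\alpha$, and no scaling is ever needed. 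An easy patch to your version is to choose $r$ so that $\nu_\omega(\alpha) - 4r \in \{4,5,6,7\}$, which always lands $\alpha'$ in a nonzero class already covered by your table.
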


\begin{proof}
    Let $\omega= \sqrt{2}$. Note that by Corollary \ref{hencal}, any unit of $\Z_2[\omega]$ congruent to a fourth power mod $\omega^9$ is also a fourth power, since $2\nu_{\omega}(4) + 1 = 9$. Considering the fourth power formula for $\Z_{2}[\omega]$ $\bmod$ $\omega^9$, we see
    \begin{align*}
    x^4 & = x_0^4 + (x_0^2x_1^2 + x_1^4)\omega^4 + (x_0^3x_1)\omega^5 \\
    &\qquad + (x_0^3x_2 + x_0^2x_1^2 + x_0^2x_2^2)\omega^6 
    + (x_0^3x_3 + x_0x_1^3 + x_0^2x_1x_2)\omega^7 \\
    & \qquad + (x_2^4 + x_0^3x_4 + x_0^2x_2^2 + x_0^2x_3^2 
    + x_1^2x_2^2 + x_0^2x_1x_3 + x_0x_1^2x_2)\omega^8 + O(\omega^9)
    \end{align*}
Note that any even power of $\omega$ is an integer, and so can be written as a sum of fourth powers.  We therefore have 
\[(\Z_{2}[\omega])^4 = \{ \alpha_0 + \alpha_2 \omega^2 + x \omega^4  \mid \alpha_i \in \{0,1\}, x \in \Z_2[\omega] \}. \]

The fourth powers of units mod $\omega^9$ in $\Z_2[\omega]$ are then $1 + \alpha_5 \omega^5 + \alpha_7 \omega^7 + \alpha_8 \omega^8$ for $\alpha_i \in \{0,1\}$. We also have that $4 = \omega^4$ and $16 = \omega^8$ are fourth powers.
Given the relatively large number of fourth powers (including 4), it is quick to check that every residue class mod $\omega^9$ in $(\Z_{2}[\omega])^4$ can be written as the sum of 7 fourth powers including at least one unit.  

To confirm $g_{2,2}(4) = 7$, we assume there exists $a, b, c, d, e, f \in \Z_{2}[\omega]$ such that $\omega^5 = 4 \sqrt{2} = a^4 + b^4 + c^4 + d^4 + e^4 + f^4$ and seek a contradiction. Similar to the method used in the proof of Theorem \ref{222k}, we can split this equation into the integral and non-integral parts, and then convert these equations into integral congruences mod $\omega^7 = 8\sqrt{2}$.  Defining $\Lambda_0$, $\Lambda_{01}$, and $\Lambda_1$ as in Theorems \ref{222k} and \ref{236}, we have 
\begin{align*}
    \Lambda_0 + 4(\Lambda_1 + \Lambda_{01}) + 8 \Lambda_{01}& \equiv 0 \bmod 16, \tag{\text{even powers}}\\
    4\Lambda_{01} & \equiv 4 \bmod 8, \tag{\text{odd powers}}
\end{align*}

From the even powers congruence, we immediately get that $\Lambda_0 \equiv 0 \bmod 4$.  Since $\Lambda_{0} \neq 0$ and $0 \leq \Lambda_0 \leq 6$, we must have $\Lambda_0 = 4$.  The odd powers congruence gives us that $\Lambda_{01}$ is odd, and since $\Lambda_0 = 4$ we must have $\Lambda_{01} = 1$ or $3$. 
 If $\Lambda_{01} = 1$, then $1 \leq \Lambda_1 \leq 3$, and the even powers congruence becomes
\[1 + (\Lambda_1 + 1) + 2 (1) \equiv \Lambda_1 \equiv 0 \bmod 4.\]
We therefore have $\Lambda_1=0$ or $4$, contradicting our bounds above.   If $\Lambda_{01} = 3$, then $3 \leq \Lambda_1 \leq 5$, and the even powers congruence becomes
\[1 + (\Lambda_1 + 3) + 2 (3) \equiv \Lambda_1 + 2 \equiv 0 \bmod 4.\]
We then have $\Lambda_1=2$ or $6$, contradicting our bounds in this case.
   
Thus $\omega^5 = 4 \sqrt{2}$ requires at least 7 fourth powers, completing the proof.
    
\end{proof}

\section{The case $p=3$} \label{3sec}

More is known about $g_{p,1} (k)$ when $p$ is odd; the work of Voloch \cite{voloch}, along with the work of Demiro\u{g}lu Karabulut \cite{demi} over finite fields, is essentially enough to calculate all reasonable values.  

Recall that if $3$ does not divide $k$, we have $\gwar{3}{e}{k} = g_{\Z_3}(k)$.  Table \ref{p3} shows the values we have calculated when $3$ divides $k$. 

\begin{table}[h]
    \centering
    \caption{Waring numbers $g_{p,e}(k)$ for $p=3$}
$\begin{array}{|c|c|c|c|c|} \cline{3-5}
      \multicolumn{2}{c|}{} & \multicolumn{3}{|c|}{k} \\
   \cline{3-5} \multicolumn{2}{c|}{} & 3 & 6 & 9\\ \hline\multirow{6}{*}{$e$}  & 1 & \; 4 \; & \;  9 \; & 13\\
        \cline{2-5} &  2 & 4  &  9 & 13\\
        \cline{2-5} &  3 & 3  &   9 & 13 \\
        \cline{2-5} &  4 & -  &  9 & -\\
        \cline{2-5} &  5 & -  &  9 & - \\
        \cline{2-5} &  6 & 3 &  4 &  -\\ \hline
\end{array}$ 
    \label{p3}
\end{table}

We provide proofs of some of these cases below.

\begin{theorem}
    We have $g_{3,3}(3) = 3$.
\end{theorem}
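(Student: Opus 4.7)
\emph{Proof plan.}  Let $\omega = \sqrt[3]{3}$, so that $\omega^3 = 3$ and $\nu_\omega(3) = 3$; by Corollary~\ref{hencal}, a unit of $\Z_3[\omega]$ is a cube if and only if it is a cube modulo $\omega^7$.  The plan is to mimic the template of Theorem~\ref{236}: derive the cube expansion modulo $\omega^7$, enumerate cube residues, use this to prove the lower bound via a direct congruence argument, and verify the upper bound by computer search combined with Hensel lifting.

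First I would expand $x^3$ modulo $\omega^7$ for $x = \sum x_i \omega^i$, using $3 = \omega^3$ and Fermat's identity $y^3 \equiv y + 3\epsilon(y) \bmod 9$ for $y \in \{0,1,2\}$ (where $\epsilon(2) = 2$ and $\epsilon(0) = \epsilon(1) = 0$).  The resulting formula expresses the $\omega^0, \omega^3, \omega^4, \omega^5, \omega^6$-coefficients of $x^3$ as explicit polynomials in $x_0, x_1, x_2, x_3$; the $\omega$- and $\omega^2$-coefficients vanish, so $(\Z_3[\omega])^3 \subseteq \{\alpha : \alpha_1 = \alpha_2 = 0\}$.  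Enumerating over $x_0 \in \{1,2\}$ and $x_1, x_2, x_3 \in \{0,1,2\}$ gives $54$ unit cube residues modulo $\omega^7$, organized into six $(\omega^0,\omega^3,\omega^4)$-triples each realized with all nine choices of $(\omega^5,\omega^6)$; cubes of valuations $1$, $2$, and $\geq 3$ contribute $6 + 2 + 1$ additional residues, all with vanishing $\omega^4$- and $\omega^5$-coefficients.

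For the lower bound I would show $\omega^4$ is not a sum of two cubes (and not itself a cube, since $4 \not\equiv 0 \bmod 3$).  Suppose $a^3 + b^3 = \omega^4$; a case analysis on $\nu_\omega(a), \nu_\omega(b)$ gives a contradiction.  If neither is a unit, the $\omega^4$-coefficient of $a^3 + b^3$ is $0$, not $1$.  If exactly one is a unit, the $\omega^0$-coefficient is not $\equiv 0 \bmod 3$, ruling out equality with $\omega^4$.  If both are units, $a_0 + b_0 \equiv 0 \bmod 3$ forces $\{a_0, b_0\} = \{1, 2\}$, carrying $1$ into the $\omega^3$-place; requiring the $\omega^3$-coefficient of the sum to vanish then leaves three cases for the $(\omega^3,\omega^4)$-coefficient pairs of $a^3, b^3$, and in each the $\omega^4$-coefficient sum is $\equiv 0 \bmod 3$, again contradicting the target $1$.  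This gives $g_{3,3}(3) \geq 3$.

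For the upper bound I would, following the SageMath-assisted approach of Theorems~\ref{236}, \ref{246}, and \ref{224}, exhibit for every residue class of $(\Z_3[\omega])^3$ modulo $\omega^7$ an explicit decomposition as a sum of at most three cube residues.  Lifting to $\Z_3[\omega]$ proceeds by choosing representative elements $a_1, a_2$ so that $\alpha - a_1^3 - a_2^3$ is a unit congruent to a cube modulo $\omega^7$, at which point Corollary~\ref{hencal} produces the third cube.  The main obstacle I anticipate is handling $\alpha$ with $\nu_\omega(\alpha) \geq 3$, where the difference $\alpha - a_1^3 - a_2^3$ need not be a unit; for these I would arrange the decomposition so that this difference has valuation divisible by $3$, factor out the resulting power of $\omega^3 = 3$ (which is itself a cube in $\Z_3[\omega]$), and then apply Corollary~\ref{hencal} to the unit quotient.
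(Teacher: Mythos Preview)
Your plan is correct and follows the same template as the paper's proof: expand cubes, identify cube residues, use $\omega^4$ for the lower bound, and cover residue classes for the upper bound.  The paper streamlines two of your steps, and in fact your own observation enables the first simplification.

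You note that the $54$ unit cube residues mod $\omega^7$ organize as six $(\omega^0,\omega^3,\omega^4)$-triples, each realized with all nine $(\omega^5,\omega^6)$-values.  The paper exploits exactly this: since the $\omega^5$- and $\omega^6$-coefficients of $x^3$ contain terms linear in $x_2$ and $x_3$ respectively (with unit leading coefficient $x_0^2$), any unit congruent to a cube mod $\omega^5$ is automatically a cube.  This reduces the relevant modulus from $\omega^7$ to $\omega^5$, leaving only six unit cubes and three non-unit cubes $\{0,\omega^3,2\omega^3\}$ to track.  The upper bound then becomes a one-line hand check---every unit residue of $(\Z_3[\omega])^3$ mod $\omega^5$ is a unit cube plus one of $0,\omega^3,2\omega^3$---rather than a SageMath search over $243$ classes.

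For non-unit $\alpha$ the paper avoids your valuation-and-factoring manoeuvre entirely: it simply writes $\alpha = 1 + (\alpha-1)$, observes that $\alpha-1$ is a unit in $(\Z_3[\omega])^3$ (constant term $2$, with $\omega$- and $\omega^2$-coefficients still zero), and applies the two-cube result just proved.  Your proposed fix via factoring out $\omega^{3m}$ would work, but is more delicate than necessary.

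Your lower-bound argument for $\omega^4$ is essentially identical to the paper's, which also reduces to the both-units case and checks the six-element list directly.
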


\begin{proof}
Let $\omega = \sqrt[3]{3}$. First, note that by Corollary \ref{hencal}, any unit of $\Z_3[\omega]$ congruent to a cube mod $\omega^7$ is also a cube, since $7 = 2\nu_{\omega}(3) + 1$.  Then, we have that 
\begin{align*}
        x^3 & = x_0^3 + (3x_1x_0^2)\omega + (3x_2x_0^2 + 3x_1^2x_0)\omega^2 + (3x_3x_0^2 + 6x_2x_1x_0 + x_1^3)\omega^3 + \dots \\
        & = x_0^3 + (x_1^3)\omega^3 + (x_1x_0^2)\omega^4 + (x_2x_0^2 + x_1^2x_0)\omega^5 \\
        & \hspace{1.2in} + (x_3x_0^2 + 2x_2x_1x_0 + x_2^3)\omega^6 + O(\omega^7)
\end{align*}
This implies that 
\[(\Z_3[\omega])^3 = \{\alpha_0 + x\omega^3 \mid \alpha_0 \in \{0,1,2\}, x \in \Z_3[\omega]\}.\]
Additionally, the linear $x_2$ term in the coefficient of $\omega^5$ and the linear $x_3$ term in the coefficient of $\omega^6$ in the cubic formula above imply that we only need to examine cubes mod $\omega^5$, less than the $\omega^7$ implied by Corollary \ref{hencal}: if $\alpha \equiv x^3 \bmod \omega^5$, then there will exist some $\delta_2, \delta_3 \in \{0,1,2\}$ such that $\alpha \equiv (x + \delta_2 \omega^2 + \delta_3 \omega^3)^3 \bmod \omega^7$.  For the last part of the setup, then, we check possible values of $x_i$ in the cubic equation above to see that a unit $x$ in $\Z_3[\omega]$ is a cube if and only if $x$ is equivalent mod $\omega^5$ to one of the following:
\begin{equation} \label{3cubes}
    1, 1+\omega^3 + \omega^4, 1+2\omega^3 + 2\omega^4, 2+2\omega^3, 2 + \omega^4, 2+\omega^3+2\omega^4.
\end{equation}

It is quick to check that every unit of $(\Z_3[\omega])^3$ is equivalent mod $\omega^5$ to the sum of one of the cubes above and one of $0, \omega^3, 2\omega^3$, the non-unit cubes mod $\omega^5$.  This then (by adding 1 to an element $x \equiv 2 \bmod \omega$) implies that every element of $(\Z_3[\omega])^3$ is the sum of 3 cubes.

To complete the proof, we show that $\omega^4$ cannot be written as the sum of 2 cubes.  If $x,y \in \Z_3[\omega]$ are such that $x^3 + y^3 = \omega^4$, the cubic formula implies that both $x$ and $y$ must be units.  But no combinations of 2 elements in the list of cubes above (\ref{3cubes}) sum to $\omega^4 \bmod \omega^5$, so $\omega^4$ requires at least 3 cubes, completing the proof.
\end{proof}

Voloch (\cite{voloch}) showed that $g_{\Z_3}(6) = g_{3,1}(6) = 9$.  The theorems below and preliminary calculations seem to indicate that this doesn't change until $e \geq 6$.  For reference for the next two theorems, note that for $x \in \Z_p[\omega]$, we have
\begin{align}
    x^6 & = x_0^6 + 6x_0^5x_1\omega + (15x_0^4x_1^2 + 6x_0^5x_2) \omega^2 + \label{6thp}\\
    & \qquad (6x_0^5x_3 + 20x_0^3x_1^3 +30 x_0^4x_1x_2) \omega^3 + \notag\\
    & \qquad (6x_0^5x_4 + 15x_0^4x_2^2 + 15x_0^2x_1^4 + 30x_0^4x_1x_3 + 60x_0^3x_1^2x_2)  \omega^4 + \dots\notag
\end{align} 

\begin{theorem} \label{336}
    We have $g_{3,3}(6) = 9$.
\end{theorem}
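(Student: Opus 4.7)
Let $\omega = \sqrt[3]{3}$. The argument parallels Theorems \ref{236} and \ref{246}. Since $\nu_\omega(6) = 3$, Corollary \ref{hencal} reduces the classification of sixth powers to their residues mod $\omega^7$. The first step is to extend Equation (\ref{6thp}) up through the $\omega^6$ term, substituting $6 = 2\omega^3$, $15 = 5\omega^3$, and $30 = 10\omega^3$, so as to read off $x^6 \bmod \omega^7$ as an explicit polynomial in $x_0, \dots, x_6$. This lets us enumerate all unit sixth powers mod $\omega^7$ and describe the set $(\Z_3[\omega])^6$ in the spirit of the earlier proofs; note that the only relevant non-unit sixth power is $\omega^6 = 9$, since the next non-unit sixth power $\omega^{12}$ vanishes mod $\omega^7$.

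For the upper bound, a SageMath enumeration (as in Theorem \ref{246}) checks that every residue class in $(\Z_3[\omega])^6 \bmod \omega^7$ is expressible as a sum of at most $9$ sixth powers. Combined with the usual factoring-out argument for elements of large $\omega$-valuation (cf.\ the proof that $g_{2,1}(4) = 15$), this yields $g_{3,3}(6) \leq 9$.

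For the lower bound, I would isolate a specific residue class flagged by the above enumeration as requiring $9$ sixth powers and suppose for contradiction that it is a sum of only $8$. Writing each $y_i$ as $\sum_j y_{i,j}\omega^j$, I introduce cumulative monomial sums $\Lambda_0, \Lambda_{01}, \Lambda_{02}, \Lambda_{012}, \dots$ in the spirit of Theorems \ref{222k} and \ref{236}, and split the supposed equality into its $\omega^0, \omega^3, \omega^6$ part and its $\omega^1, \omega^2, \omega^4, \omega^5$ part. This produces a system of linear congruences mod suitable powers of $3$ on the $\Lambda$'s, and a finite case analysis using the bounds $\Lambda_{0\cdots} \leq \Lambda_0 \leq 8$ should deliver the contradiction.

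The main obstacle is the lower bound. Because $\Z_3[\omega]$ contains strictly more sixth powers than $\Z_3$---for instance $9 = \omega^6$, the classical hard element for $g_{\Z_3}(6) = 9$, is now itself a single sixth power---the witness element cannot simply be transplanted from $\Z_3$. The key leverage is that every unit sixth power still satisfies $y^6 \equiv 1 \bmod \omega$ (since $y_0^6 \equiv 1 \bmod 3$ for $y_0 \in \{1,2\}$), yielding a mod-$3$ constraint on the number of unit summands; combined with the $\omega^3$- and $\omega^6$-coefficient congruences, this should force at least $9$ summands for the right target. The bookkeeping is heavier than in the $p=2$ proofs, because the multinomial coefficients $6$, $15$, $20$, $30$ all carry nontrivial $3$-adic valuation and mix across $\omega$-levels, but the underlying principle of decoupling the integral and fractional parts of the $\omega$-expansion into independent congruences is the same.
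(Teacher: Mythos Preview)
Your plan is workable, but the paper's execution differs at each step and is considerably cleaner. First, the paper observes that for $i\ge 5$ the $\omega^i$-coefficient of $x^6$ contains a term linear in $x_{i-3}$, so Hensel can be sharpened: a unit is a sixth power iff it is one mod $\omega^5$, leaving only three unit sixth powers $1$, $A=1+\omega^3+\omega^4$, $B=1+2\omega^3+2\omega^4$. Second, the upper bound is not a machine check but an explicit construction: for $\alpha$ with digits $\alpha_0,\alpha_3,\alpha_4$, set $M=\alpha_0-1+3\alpha_3-3\alpha_4$ and write $\alpha$ as one unit sixth power plus $M$ copies of $1$ (using the sixth power $-8$ when $M<0$). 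Third, the lower bound (for $3+\omega^4$) is not a $\Lambda$-system but a substitution trick: the relations $2B\equiv 1+A$, $A+B\equiv 2$, $3A\equiv 3\pmod{\omega^5}$ reduce any representation to exactly one $A$ and $N-1$ ones, whence $N\equiv 0\pmod 9$.

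Two caveats on your version. The $\Lambda$-notation does not port directly from $p=2$: here coefficients lie in $\{0,1,2\}$, so products like $x_0^6$ or $x_0^3x_1^3$ are not idempotent, and the decomposition should be into \emph{three} $\Z_3$-independent parts ($1,\omega,\omega^2$), not two. Done that way your idea does go through: writing $N$ unit summands as $a\cdot 1+b\cdot A+c\cdot B$ mod $\omega^5$ gives $b+2c\equiv 1\pmod 3$ and $N+3(b+2c)\equiv 3\pmod 9$, so $N\equiv 0\pmod 9$. Also, the factoring-out step you invoke fails here: if $\nu_\omega(\alpha)\ge 6$ then $\alpha/\omega^6$ need not lie in $(\Z_3[\omega])^6$ (e.g.\ $\alpha=\omega^7$ gives $\omega\notin(\Z_3[\omega])^6$). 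Fortunately it is unnecessary, since even the zero residue class mod $\omega^5$ (or $\omega^7$) admits a representation with a unit summand, e.g.\ $0=(-8)+8\cdot 1$.
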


\begin{proof}
Let $\omega = \sqrt[3]{3}$. Our first step is to determine $(\Z_3[w])^6$.  Reducing the sixth power formula above (Equation (\ref{6thp})) when $\omega^3 = 3$, we get
\begin{align*}
    x^6 & = x_{0}^6 + 2x_{0}^3 x_{1}^3 \omega^3  + 2x_{0}^5 x_{1}\omega^4 + (2 x_{0}^5 x_{2} + 2x_{0}^4 x_{1}^2)\omega^5  + \\
    & \qquad (2x_{0}^5 x_{3} + 2x_{0}^3 x_{2}^3 +x_{0}^4 x_{1} x_{2} + x_{1}^6)\omega^6 + \\
    & \qquad (2x_{0}^5 x_{4} + x_0^4x_1x_3 + 2x_0^4x_2^2 + 2x_0^3x_1^2x_2 + 2x_0^2x_1^4)\omega^7 + \\
    & \qquad (2x_0^5x_5 + x_0^4x_1x_4 + x_0^4x_3x_2 +   2x_0^3x_1x_2^2 + 2x_0^3x_1^2x_3 \\
    & \hspace{1.2in} + 2x_0^2x_1^3x_2 + 2x_0x_1^5 + x_0^4x_1^2)\omega^8 + O(\omega^9)
\end{align*}

Note that for every $\omega^i$ for $i \geq 5$ the coefficient contains a term linear in $x_{i-3}$ that is the first and only appearance of $x_{i-3}$ in the $\omega^i$ coefficient.  (For example, the $2 x_{0}^5 x_{2}$ term in the coefficient of $\omega^5$.)  These terms allow us to improve the result from Corollary \ref{hencal} (which allows us to look mod $\omega^9$) to say that any unit in $\Z_3[w]$ that is congruent to a sixth power mod $\omega^5$ is a sixth power in $\Z_3[w]$.  

Our relevant sixth power formula then becomes
\begin{align*}
    x^6  & = x_{0}^6 + 2x_{0}^3 x_{1}^3 \omega^3  + 2x_{0}^5 x_{1}\omega^4 + O(\omega^5)
\end{align*}
Then, since $1+1+1 = 3 = \omega^3$, any combination of coefficients for $\omega^3$ and $\omega^4$ can occur in sums of sixth powers, so we have (again)
\[(\Z_3[w])^6 = \{\alpha_0 + x\omega^3 \mid \alpha_0 \in \{0,1,2\}, x \in \Z_3[\omega]\}.\]

We then need to show that every element of $(\Z_3[w])^6$ can be written as the sum of at most 9 sixth powers.  There are only 3 unit sixth powers mod $\omega^5$:
\begin{equation*}
    1, 1 + \omega^3 + \omega^4 = 4 + \omega^4, 1 + 2\omega^3 + 2\omega^4 = 7 + 2\omega^4
\end{equation*}
If $\alpha = \alpha_0 + \alpha_3\omega^3 + \alpha_4\omega^4 + \ldots \in (\Z_3[w])^6$, then we claim we can write $\alpha$ as a sum of at most 9 sixth powers.  Let $M = \alpha_0 -1 + 3\alpha_3 - 3\alpha_4 = \alpha_0 - 1 + (\alpha_3 - \alpha_4)\omega^3$.  Note that $-7\leq M \leq 8$.  Note that $1 + \alpha_4\omega^3 + \alpha_4\omega^4 + \sum_{i \geq 5} \alpha_i \omega^i$ is congruent mod $\omega^5$ to one of the sixth powers listed above (depending on $\alpha_4$), and is therefore itself a sixth power.  So, if $M \geq 0$, then 
\begin{align*}
    M \cdot 1 + (1 + \alpha_4\omega^3 + & \alpha_4\omega^4 + \dots) \\
    & = \alpha_0 - 1 + (\alpha_3 - \alpha_4)\omega^3 + (1 + \alpha_4\omega^3 + \alpha_4\omega^4 + \dots)\\
    & = \alpha_0 + (\alpha_3 - \alpha_4 + \alpha_4)\omega^3 + \alpha_4 \omega^4 + \dots \\
    & = \alpha,
\end{align*}
and therefore $\alpha$ can be written as the sum of $M+1 \leq 9$ sixth powers.  If $M < 0$, then (noting that -8 is a sixth power in $\Z_3[w]$ since it is congruent to 1 mod $\omega^5$) we have $\alpha = -8 + (M+8) \cdot 1 + (1 + \alpha_4\omega^3 + \alpha_4\omega^4 + \dots)$.  Since in this case $7 \geq M+8 > 0$, we again have that $\alpha$ can be written as the sum of at most $9$ sixth powers. Therefore $g_{3,3}(6) \leq 9$.

We then claim that $\omega^3 + \omega^4 = 3 + \omega^4$ cannot be written as the sum of fewer than 9 sixth powers in $(\Z_3[w])^6$, which would then imply that $g_{3,3}(6) = 9$.  Let $A = 4 + \omega^4$ and $B = 7 + 2\omega^4$.  First, note that if we write $3 + \omega^4$ as a sum of copies of $1, A$, and $B$ (the unit sixth powers in $\Z_3[w]$), the sum must include at least one copy of $A$ or $B$.  Then, note that $2B \equiv 1 + A \bmod \omega^5$, $A+B \equiv 2 \bmod \omega^5$, and $3A \equiv 3 \bmod \omega^5$. We may therefore assume without loss of generality that if we can write $3 + \omega^4$ as a sum of $N$ sixth powers (since the coefficient of $\omega^4$ is 1) that these sixth powers contain exactly 1 copy of $A$ and no copies of $B$.  But then $3 + \omega^4 \equiv A + (N-1)\cdot 1 = N+3 + \omega^4 \bmod \omega^5$.  Since $N > 0$, the least value possible for $N$ is 9, so $3 + \omega^4$ cannot be written as the sum of fewer than 9 sixth powers, completing the proof.
\end{proof}

The main difference between $\Z_3[\sqrt[3]{3}]$ and $\Z_3[\sqrt[6]{3}]$ is that 3 is now a sixth power; this reduces the Waring number for $\Z_3[\sqrt[6]{3}]$ to 4.

\begin{theorem}
We have $g_{3,6}(6) = 4$.
\end{theorem}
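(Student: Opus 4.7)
The plan is to follow the template of Theorem \ref{336}, now working in $\Z_3[\omega]$ with $\omega = \sqrt[6]{3}$. The crucial new feature compared to $\Z_3[\sqrt[3]{3}]$ is that $3 = \omega^6$ is itself a sixth power, which furnishes many extra sixth powers of the form $\omega^{6k} y^6 = (\omega^k y)^6$ and accounts for the sharp drop from $g_{3,3}(6) = 9$ to $g_{3,6}(6) = 4$.

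First I would enumerate the sixth powers. Corollary \ref{hencal} with $2\nu_\omega(6)+1 = 13$ reduces the classification of unit sixth powers to residues modulo $\omega^{13}$. Expanding $x^6 = (\sum x_i \omega^i)^6$ multinomially and converting powers of $3$ into $\omega^6$, Lucas's theorem applied to $6 = 20_3$ says that among all multinomials $\binom{6}{k_0,k_1,\ldots}$ only those coming from the partitions $(6)$ and $(3,3)$ are nonzero modulo $3$. The leading expansion of a unit sixth power is
\[x^6 \equiv 1 + 2 x_0^3 x_1^3 \omega^3 + (x_1^6 + 2 x_0^3 x_2^3)\omega^6 + 2 x_0^5 x_1 \omega^7 + \cdots \pmod{\omega^8},\]
and satisfies the coupling $(\omega^3\text{-coef}) \equiv (\omega^7\text{-coef}) \pmod 3$ (both reduce to $2 x_0 x_1$). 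As in the proofs of Theorems \ref{336} and \ref{246}, for each $i \geq 8$ the $\omega^i$-coefficient of $x^6$ contains a freely adjustable linear term in $x_{i-6}$ (arising from $\binom{6}{1} x_0^5 x_{i-6} \omega^{i-6}$ after the $\omega^6 = 3$ shift), so the residue classification effectively reduces to a small power of $\omega$.

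Combining the list of unit sixth powers with their $\omega^6$-multiples and negatives, a SageMath enumeration would then tabulate (in the spirit of Table \ref{2kpowers}) a representation of each residue class of $(\Z_3[\omega])^6$ modulo $\omega^{13}$ as a sum of at most $4$ sixth powers; elements of large $\omega$-valuation are handled by the scaling trick from the proof of $g_{2,1}(4)=15$ (writing $\alpha = (\omega^r)^6 \beta$ when $\nu_\omega(\alpha)$ is large), giving the upper bound $g_{3,6}(6) \leq 4$. For the lower bound, I would isolate a witness residue class of $(\Z_3[\omega])^6$ that resists three-term representation and apply the integral-congruence/normalization method of Theorems \ref{236} and \ref{336}: assuming $\alpha = a^6 + b^6 + c^6$, split the equation modulo $\omega^{13}$ into congruences by powers of $\omega$, define $\Lambda$-sums $\Lambda_0 = a_0 + b_0 + c_0$, $\Lambda_{01} = a_0^3 a_1^3 + b_0^3 b_1^3 + c_0^3 c_1^3$, and higher $\Lambda_{0j}$ as needed, and exploit the a priori bound $0 \leq \Lambda_{0j} \leq \Lambda_0 \leq 3$ to derive a contradiction.

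The main obstacle will be identifying the correct witness element and managing the $\Lambda$-bookkeeping cleanly: because $\omega^6 = 3$ couples low and high coefficient positions through base-$3$ carries in the reduction of $\Z_3$-integer coefficients to $\omega$-digits in $\{0,1,2\}$, the mod-$\omega^{13}$ congruence analysis is more delicate than in the $\Z_3[\sqrt[3]{3}]$ setting of Theorem \ref{336}. Once a workable witness is pinned down, the mechanics of the $\Lambda$-argument proceed routinely.
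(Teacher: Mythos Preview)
Your plan matches the paper's approach: expand $x^6$ in $\Z_3[\omega]$ with $\omega=\sqrt[6]{3}$, use the linear $x_{i-6}$ terms to sharpen Corollary~\ref{hencal}, enumerate sixth powers, and let SageMath handle the upper bound. Two small corrections. First, the claim that $x_{i-6}$ is freely adjustable for $i\ge 8$ is off by one: $x_2$ already appears (cubed) in the $\omega^6$ coefficient via $20x_0^3x_2^3\omega^6$, so it is pinned down before you reach $\omega^8$. The paper reduces to $\bmod\ \omega^9$, treating $i=9$ separately because $2x_0^5x_3+2x_0^3x_3^3\equiv x_0x_3\bmod 3$ is still linear in $x_3$. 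Second, for the lower bound the paper does not run a $\Lambda$-argument at all; it simply lets SageMath enumerate all $3$-term sums of the sixth powers in Table~\ref{g366table} and observes that the residue classes $\omega^6+\omega^8$ and $2\omega^6+2\omega^8$ are missed (one can also see this from the fact that the $\omega^6$- and $\omega^8$-coefficients of every unit sixth power coincide). Your $\Lambda$-method would work in principle, but it is more labor than the paper expends. The remark about ``negatives'' is unnecessary here, since $-1$ is not a sixth power in $\Z_3[\omega]$.
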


\begin{proof}
Let $\omega = \sqrt[6]{3}$.  Using the sixth power formula (Equation (\ref{6thp})), we get 
\begin{align*}
    x^6 & = x_{0}^6+(2x_{0}^3x_{1}^3)\omega^3+(x_{1}^6+2x_{0}^3x_{2}^3)\omega^6+ (2x_{0}^5x_{1})\omega^7  \\
    & \qquad + (2x_{0}^5x_{2}+2x_{0}^4x_{1}^2)\omega^8+(x_{0}^4x_{1}x_{2}+2x_{0}^5x_{3}+2x_{0}^3x_{3}^3)\omega^9\\
    & \qquad + (2x_0^5x_4 + f_{10}(x_0, x_1, x_2, x_3))\omega^{10} + (2x_0^5x_5 + f_{11}(x_0, x_1, x_2, x_3, x_4))\omega^{11}  \\
    & \qquad + (2x_0^5x_6 + f_{12}(x_0, x_1, x_2, x_3, x_4, x_5))\omega^{12} + O(\omega^{13}),
\end{align*}
where the $f_i$ are homogenous polynomials of degree 6.  As in the proof of Theorem \ref{336} above, the presence of the linear terms in $x_{i-6}$ in the coefficients of $\omega^i$ for $i \geq 10$ allows us to improve the result from Corollary \ref{hencal}.  Additionally, note that $2x_{0}^5x_{3}+2x_{0}^3x_{3}^3 \equiv x_0x_3 \bmod 3$, so the coefficient of $\omega^9$ can be similarly determined by $x_3$.  Consequently, any unit in $\Z_3[\omega]$ congruent to a sixth power mod $\omega^9$ is also a sixth power in  $\Z_3[\omega]$.

\begin{table}[h]
    \centering
    \caption{Sixth powers $\bmod$ $\omega^9$ in $\Z_3[\omega]$}
$\begin{array}{|l|} 
\hline
   1 \\ \hline
1 + \omega^6 + \omega^8 \\ \hline
1 + 2\omega^6 + 2\omega^8 \\ \hline
1 + \omega^3  + \omega^7 + \omega^8 \\ \hline
1 + \omega^3 + \omega^6 + \omega^7 + 2\omega^8 \\ \hline
1 + \omega^3 + 2\omega^6 + \omega^7  \\ \hline
1 + 2\omega^3 + 2\omega^7 + \omega^8  \\ \hline
1 + 2\omega^3 + \omega^6 + 2\omega^7 + 2\omega^8 
 \\ \hline
1 + 2\omega^3 + 2\omega^6 + 2\omega^7 
 \\ \hline
\omega^6 \\ \hline
\end{array}$ 
    \label{g366table}
\end{table}

Given the sixth powers listed in Table \ref{g366table}, we get
\[ (\Z_3[\omega])^6 = \{\alpha_0 + \alpha_3 \omega^3 + x \omega^6 \mid \alpha_0, \alpha_3 \in \{0,1,2\}, x \in \Z_3[\omega]\}
\]

A calculation via SageMath shows that any element $\alpha \in (\Z_3[\omega])^6$ can be written in at most 4 sixth powers. Adding all combinations of 3 sixth powers yields two residue classes, $\omega^6 + \omega^8$ and $2\omega^6 + 2\omega^8$, that cannot be written as the sum of 3 sixth powers.  (That these residue classes cannot be written as the sums of 3 sixth powers can also be deduced from the correlation of the appearance of $\omega^6$ and $\omega^8$ in the list of sixth powers in Table \ref{g366table}.)

We then have $g_{3,6}(6) = 4$.

\end{proof}

\begin{theorem}
We have $g_{3,e}(9) = 13$ for $e = 1,2,3$.
\end{theorem}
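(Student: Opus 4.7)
For each $e \in \{1,2,3\}$, let $\omega = \sqrt[e]{3}$, so that $\nu_\omega(9) = 2e$ and Corollary \ref{hencal} reduces identifying unit ninth powers to a congruence modulo $\omega^{4e+1}$. The first step is to expand $x^9 = (\sum_i x_i \omega^i)^9$ via the multinomial theorem and reduce modulo $\omega^{4e+1}$ using $\omega^e = 3$. As in Theorem \ref{336} and the computation of $g_{3,6}(6) = 4$, the coefficient of $\omega^j$ in $x^9$ contains a linear-in-$x_{j-2e}$ term of the form $9\, x_0^8 x_{j-2e} = \omega^{2e} x_0^8 x_{j-2e}$ for $j \geq 2e$. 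These linear terms should let us sharpen Corollary \ref{hencal} and work modulo a smaller power of $\omega$ (roughly $\omega^{2e+3}$), on which the unit ninth powers form an explicit finite list. Combining that list with the available non-unit ninth powers (notably $\omega^9 = 27$ when $e=3$) determines $(\Z_3[\omega])^9$.

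For the upper bound, following Theorem \ref{336} and the proof that $g_{3,6}(6) = 4$, a SageMath enumeration should verify that every residue class of $(\Z_3[\omega])^9$ modulo the relevant power of $\omega$ can be expressed as a sum of at most 13 ninth powers. The rescaling trick used in the proof of $g_{2,1}(4) = 15$ and in Lemma \ref{oddub} --- factoring out the largest $\omega^{9r}$ dividing $\alpha$ and applying the residue-class result to $\alpha/\omega^{9r}$ --- then extends the bound to elements of arbitrary $\omega$-valuation.

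For the lower bound, the plan is to target a hard element analogous to the $3 + \omega^4$ used in Theorem \ref{336}, such as $\alpha = 3 + c\omega^{2e}$ for an appropriate $c$, or equivalently the integer element achieving Voloch's $g_{\Z_3}(9) = 13$ bound viewed inside $\Z_3[\omega]$. Assuming $\alpha = \sum_{i=1}^N x_i^9$ with $N < 13$, expanding each $x_i^9$ and matching coefficients of $\omega^j$ for small $j$ yields a system of integer congruences in the symmetric sums $\Lambda_0 = \sum_i x_{i,0}$, $\Lambda_{01} = \sum_i x_{i,0}x_{i,1}$, and their higher analogues, paralleling Theorems \ref{222k}, \ref{236}, and \ref{336}. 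The integral-part congruences constrain the distribution of the $x_i$ across unit-ninth-power residue classes, and the non-integral coefficients then pin down the higher $x_{i,j}$; together these should force $N \geq 13$.

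The main obstacle is the lower bound when $e = 2$ or $e = 3$: the extra coordinates $x_{i,1}, x_{i,2}, \ldots$ enlarge the summand space and introduce mixed cross-terms in $x_i^9$ that were absent when $e = 1$. I expect the cleanest route will be to identify an obstruction invariant across $e \in \{1, 2, 3\}$, for instance by projecting $\alpha$ onto its integral part (the coefficients of $\omega^{je}$) and showing that the non-integral summands cannot reduce the count of required unit summands below the classical $\Z_3$ count. If such a projection argument succeeds, Voloch's lower bound of 13 for $\Z_3$ carries over uniformly to the $e = 2$ and $e = 3$ cases.
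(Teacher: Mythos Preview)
Your outline follows the same architecture as the paper's proof: expand $x^9$, use the linear $9x_0^8 x_j$ terms to sharpen Corollary~\ref{hencal}, read off the unit ninth powers, then handle the upper and lower bounds. Where you diverge is in the lower bound, and there you are making your life much harder than necessary.

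The paper's key observation is that the list of unit ninth powers is extremely short. For $e=1$ one finds that a unit $\alpha\in\Z_3$ is a ninth power iff $\alpha\equiv\pm 1\bmod 27$; for $e=2$ (with $\omega=\sqrt{3}$) a unit is a ninth power iff $\alpha\equiv\pm 1\bmod\omega^5$. In both cases non-unit ninth powers are $\equiv 0$ modulo the relevant power, so any sum of $N$ ninth powers lies in $\{-N,\dots,N\}$ modulo $27$ (resp.\ modulo $\omega^6=27$). Hence the integer $13$ visibly cannot be reached with fewer than $13$ summands, and it is reached by $13\cdot 1$. No $\Lambda$-congruence machinery, no projection argument, no SageMath search is needed: once you have the ninth-power list, both bounds are immediate. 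For $e=3$ the ninth powers are $\pm 1 + y(\omega^6+\omega^7)\bmod\omega^8$ for $y\in\{0,1,2\}$; the extra $\omega^6$ term looks promising, but it is rigidly tied to an $\omega^7$ term, and since $13$ has zero $\omega^7$-coefficient those summands cannot be used to shortcut the count. So again $13$ is the witness.

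In short: your plan would work, but the ``main obstacle'' you anticipate for $e=2,3$ dissolves once you actually compute the ninth powers and see they are essentially just $\pm 1$. Replace the proposed $\Lambda$/projection argument with a one-line pigeonhole on the integer $13$, and you have the paper's proof.
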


\begin{proof}
    In $\Z_3$, we have
    \[x^9 = x_0^9 + (x_0^8x_1)3^3 + (x_0^8x_2 + x_0^7x_1^2 + x_0^6x_1^3)3^4 + O(3^5).\]
    The linear $x_1$ term in the $3^3$ coefficient and the linear $x_2$ term in the $3^4$ coefficient give us that a unit $\alpha \in \Z_3$ is a ninth power if and only if $\alpha \equiv \pm 1 \bmod 27$, and there are no other ninth powers mod 27.  Since $13 \equiv -14 \bmod 27$, this immediately implies that 13 cannot be written as the sum of fewer than 13 ninth powers.  And, since 13 is the sum of 13 1s, we have $g_{\Z_3}(9) = 13$.
    
Next, let $\omega = \sqrt{3}$.   Then in $\Z_3[\omega]$, we have
\[x^9 = x_0^9 + (x_0^8x_1 + x_0^6x_1^3)\omega^5 + O(\omega^6).\]
Note then that if $x_0 \neq 0$, the $\omega^5$ coefficient is equivalent to $2x_1 \bmod 3$.  This, along with similar linear coefficients in $\omega^m$ for $m \geq 6$, imply that a unit $\alpha \in \Z_3[\omega]$ is a ninth power if and only if $\alpha \equiv \pm 1 \bmod \omega^5$.  

Since we can express every even power of $\omega$ as a sum of 1s, we then have 
\[(\Z_3[\omega])^9 = \{\alpha_0 + \alpha_2\omega^2 + x\omega^4 \mid \alpha_0, \alpha_2 \in \{0,1,2\}, x \in \Z_3[\omega]\}.\]
Since $27 = \omega^6$, the calculation of $g_{3,2}(9)$ then involves exactly the same upper and lower bounds as above in calculating $g_{\Z_3}(9)$, so $g_{3,2}(9) = 13$.

Finally, let $\omega = \sqrt[3]{3}$.   Then in $\Z_3[\omega]$, we have
\[x^9 = x_0^9 + x_0^6x_1^3\omega^6 + x_0^8x_1\omega^7 + (x_0^8x_2 + x_0^7x_1^2) \omega^8 + O(\omega^9).\]
The linear $x_2$ term in the coefficient of $\omega^8$ and similar terms in higher coefficients imply that a unit $\alpha \in \Z_3[\omega]$ is a ninth power if and only if $\alpha \equiv \pm1 + y(\omega^6 + \omega^7) \bmod \omega^8$ for some $y \in \{0,1,2\}$.  

Since we can express every third power of $\omega$ as a sum of 1s, we then have 
\[(\Z_3[\omega])^9 = \{\alpha_0 + \alpha_3\omega^3 + x\omega^6 \mid \alpha_0, \alpha_3 \in \{0,1,2\}, x \in \Z_3[\omega]\}.\]
Noting that $27 = \omega^9$, we can write every residue class mod $\omega^8$ as the sum of at most 13 ninth powers, including one or two ninth powers with an $\omega^7$ term being necessary.  However, 13 still cannot be written as the sum of fewer than 13 ninth powers, despite the promising $\omega^6 = 9$ term appearing in some ninth powers.  Indeed, the lack of an $\omega^7$ term in 13 implies that we cannot use the associated $\omega^6$ term to write 13 as the sum of fewer ninth powers.  So we have $g_{3,3}(9) = 13$.
\end{proof}

It is quite likely that $g_{3,e}(9) < 13$ for $e \geq 4$, since then $(\sqrt[e]{3})^9$ will be in a non-zero residue class mod 27, though we have not yet calculated any such values.

\section{Acknowledgements}

The authors would like to thank the McDaniel Student-Faculty Collaborative Summer Research Fund and the Lightner Fund for their support of our research.  We would also like to thank James Benjamin, Chandra Copes, Maia Hanlon, Kevin Rabidou, Luke Shuck, and Ash Wright for their contributions to this work, Tim Banks for his careful reading of the early drafts of this paper, and the anonymous referee for their helpful comments and suggestions.

\bibliographystyle{plainnat}

\begin{thebibliography}{0}

\bibitem{bhask}
M. Bhaskaran, Sums of $p$-th powers in $p$-adic rings, Acta Arith. {\bf 15} (1969), 217--219

\bibitem{bovey}
J.D. Bovey, A note on Waring's problem in $p$-adic fields, Acta Arith. {\bf 29} (1976), 343--351

\bibitem{conrad}
K. Conrad, Hensel's Lemma, {\tt https://kconrad.math.uconn.edu/blurbs/gradnumthy/hensel.pdf}

\bibitem{demi}
Y. Demiro\u{g}lu Karabulut, Waring's problem in finite rings, J. Pure Appl. Algebra {\bf 223} (2019), no.~8, 3318--3329

\bibitem{dodson1}
M. M. Dodson, On Waring's problem in $p$-adic fields, Acta Arith. {\bf 22} (1972/73), 315--327

\bibitem{kowmis}
T. Kowalczyk\ and\ P. Miska, On Waring Numbers of Henselian Rings, Mathematika {\bf 70} (2024), no.~4, e12276

\bibitem{voloch}
J. F. Voloch, On the $p$-adic Waring's problem, Acta Arith. {\bf 90} (1999), no.~1, 91--95

\bibitem{wooley}
R. C. Vaughan\ and\ T. D. Wooley, Waring's problem: a survey, in {\it Number theory for the millennium, III (Urbana, IL, 2000)}, 301--340, A K Peters, Natick, MA

\end{thebibliography}

\end{document}